\documentclass[11pt]{amsart}
\usepackage[utf8]{inputenc}

\usepackage{tikz}
\usepackage{hyperref}
\usepackage{mathtools}

\usepackage[
  backend=biber,
  style=numeric,
  maxnames=5,
  maxcitenames=5,
  sorting=nyt,
  giveninits=true,
  doi=false,
  isbn=false,
  url=false,
  eprint=true
]{biblatex}
\usepackage{mymacro}

\newcommand{\pvar}[1]{#1\text{-}\mathrm{var}}
\newcommand{\mesh}[1]{| #1 |}

\begin{document}
\emergencystretch=2em

\title{Liouville regular Jordan curves}

\author{Tomohiro Asano}
\address{Department of Mathematics, Kyoto University, Kitashirakawa-Oiwake-Cho, Sakyo-ku, 606-8502, Kyoto, Japan}
\email{tasano@math.kyoto-u.ac.jp}
\thanks{The first author was supported in part by JSPS KAKENHI Grant Number JP24K16920 and partially by JST, CREST Grant Number JPMJCR24Q1, Japan.}

\author{Yuichi Ike}
\address{Graduate School of Mathematical Sciences, The University of Tokyo, 3-8-1 Komaba, Meguro-ku, Tokyo 153-8914, Japan}
\email{ike@ms.u-tokyo.ac.jp}
\thanks{The second author was supported in part by JSPS KAKENHI Grant Numbers JP22H05107 and 25K17254, and partially by JST, CREST Grant Number JPMJCR24Q1, Japan.}

\keywords{Rectangular Peg Problem, Jordan curve, Young integral, $p$-variation, H\"older domain}
\date{}

\begin{abstract}
    We investigate the class of Liouville regular Jordan curves, which was introduced in our previous paper under the phrase ``Jordan curves that admit continuous Legendrian lifts''.
    We prove that a Jordan curve satisfying certain variation condition in a neighborhood of each point is in this class. 
    We also prove that the boundary of a Jordan domain is Liouville regular if the images of the radial tails under a conformal mapping have uniformly vanishing length.
    Together with our previous result, these criteria imply the existence of an inscribed rectangle in every prescribed similarity class.
\end{abstract}

\maketitle

\section{Introduction}

The Square Peg Problem, first posed by Toeplitz~\cite{Toeplitz} in 1911, asks whether every continuous Jordan curve in the plane inscribes a square. 
The Rectangular Peg Problem is its generalization and asks whether a Jordan curve inscribes a rectangle of a prescribed similarity class.
In \cite{GL21}, Greene and Lobb proved that for every rectangle $R$ in $\bR^2$, every smooth Jordan curve inscribes a rectangle similar to $R$.
In our previous paper~\cite{AI_peg} (independently obtained by St\'ephane Guillermou), we proved the existence of inscribed rectangles for a certain class of Jordan curves. 

\begin{theorem}[{\cite[Thm~1.1]{AI_peg}}]\label{thm:peg}
    Let $R$ be a rectangle in $\bR^2$.  
    Every Liouville regular Jordan curve inscribes a rectangle similar to $R$.
\end{theorem}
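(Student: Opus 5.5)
This theorem is quoted from \cite{AI_peg}, so the plan follows the symplectic strategy of Greene--Lobb, using the Legendrian lift that Liouville regularity provides as a replacement for smoothness. Identify $\bR^2$ with $\bC$ and let $\gamma$ be a Liouville regular Jordan curve. For a rectangle with aspect ratio determined by an angle $\phi$, a rectangle inscribed in $\gamma$ corresponds to two distinct unordered pairs $\{z,w\}$ on $\gamma$ with the same midpoint and the same half-length, whose half-chords differ in argument by the prescribed angle.

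Concretely, I would work with the map $\gamma\times\gamma\to\bC^2$, $(z,w)\mapsto\bigl(\tfrac{z+w}{2},\tfrac{z-w}{2}\bigr)$. Its image is a Lagrangian torus $L$ in $(\bC^2,\omega_{\mathrm{std}})$ when $\gamma$ is smooth, and the diagonal collapses to a circle in $\bC\times\{0\}$. Inscribed rectangles similar to $R$ correspond to intersections of $L$ with its image $R_\phi L$ under the symplectic rotation $R_\phi(a,b)=(a,e^{i\phi}b)$, away from the common circle $\{b=0\}$.

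The key steps are as follows.

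\textbf{Step 1.} Resolve the clean intersection along $\{b=0\}$, for instance by a blow-up or by passing to the quotient by the swap $(z,w)\mapsto(w,z)$. This turns $L$ into a Lagrangian Klein bottle or torus, depending on $\phi$.

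\textbf{Step 2.} Argue that if the resolved $L$ and $R_\phi L$ were disjoint off the circle, one could build a Lagrangian Klein bottle in $\bC^2$ (or a displaceable torus), contradicting Shevchishin--Nemirovski in the Klein bottle case or a Floer-theoretic displacement bound.

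\textbf{Step 3.} Handle nonsmooth $\gamma$. Here Liouville regularity should mean that the Lagrangian $L$ admits a continuous primitive, namely a continuous Legendrian lift to $\bC^2\times\bR$. I would therefore replace Lagrangian submanifolds by their sheaf-theoretic analogues. The microsupport of a sheaf on $\bC\times\bR$ (or on the jet space) quantizes the lift, and intersection points are detected through a Hom or persistence-module computation that survives $C^0$-limits. Concretely, I would approximate $\gamma$ by smooth curves $\gamma_n$ whose lifts converge uniformly. The existence of a continuous lift should give a uniform bound on the action (energy) of the inscribed rectangles of $\gamma_n$. With that bound, the rectangles neither degenerate to a point nor collapse onto the diagonal, and a compactness argument yields a genuine rectangle inscribed in $\gamma$.

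The main obstacle is Step 3, specifically showing that the inscribed rectangles of the $\gamma_n$ stay bounded away from degeneracy. Degenerate limits are exactly where the smooth Lagrangian argument gives no information. I would first try to obtain a lower bound on rectangle size from sheaf-theoretic energy estimates: the Tamarkin-type interleaving distance between the sheaves for $\gamma_n$ and $\gamma$ is controlled by the uniform distance between their lifts. A nonvanishing barcode bar of length bounded below then forces a nondegenerate intersection that persists in the limit.
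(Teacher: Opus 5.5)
Your proposal has a genuine gap exactly where you locate the ``main obstacle'', and the mechanism you suggest for closing it points the wrong way. The paper does not reprove this theorem; it imports it from \cite{AI_peg}. Its remarks on the Legendrian lift $\widehat c=(c,f)$, and on the sheaf quantization $F_C$ of $C\times C$ whose microsupport recovers $f$ in the Tamarkin coordinate, indicate that the argument there is microlocal-sheaf-theoretic, so your general direction is reasonable.

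Steps~1--2 are the Greene--Lobb nonexistence argument. (Incidentally, the swap quotient turns $L$ minus the circle into a M\"obius band; the Klein bottle comes from gluing the bands for $L$ and $R_\phi L$ along the circle.) This argument gives each smooth $\gamma_n$ \emph{some} inscribed rectangle, but with no quantitative information. Every Jordan curve is a uniform limit of smooth ones, so a compactness argument using only this and $\gamma_n\to\gamma$ would settle the peg problem for all Jordan curves. Hence $f_n\to f$ must enter, and it can only do so through the actions of the detected rectangles.

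Your sentence ``the existence of a continuous lift should give a uniform bound on the action'' gets this backwards. Suppose rectangles inscribed in $\gamma_n$ shrink to a point of $\gamma$; for $\phi\notin\{0,\pi\}$ this is the only possible degeneration. Their actions are $f_n(s_1)+f_n(s_2)-f_n(s_3)-f_n(s_4)$, up to quadratic corrections that vanish when the four vertices coincide. By uniform convergence and continuity of $f$, these actions converge into the period set $\bZ f(2\pi)$, which is exactly the set of action levels of the clean circle $\{b=0\}$. So bounded action is automatic and useless. What you need is that the detected rectangle has action in a fixed window disjoint from $\bZ f(2\pi)$, uniformly in $n$. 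Continuity of $f$ then turns this into a lower bound on the rectangle's diameter.

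That window is the whole content of the theorem, and the Klein-bottle argument carries no action information. So Steps~1--2 must be replaced, not supplemented, by an action-filtered detection mechanism. One option is the persistence module of Homs, in a Tamarkin-type category, between a sheaf quantization of the lifted torus and its image under the quantized rotation; another is to apply microsupport estimates directly to the limit sheaf $F_C$. In either case you must:
\begin{enumerate}
\item account explicitly for the clean intersection along $\{b=0\}$;
\item show that its contribution sits at the degenerate levels;
\item show that the invariant forces something at a nondegenerate level.
\end{enumerate}
Your plan specifies none of this: which object is quantized, what is computed, why the circle does not exhaust the invariant, or why a bar of length bounded below exists. Its final sentence essentially restates the conclusion.

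Note also that $L$ is not exact (both of its periods equal $f(2\pi)$). Liouville regularity therefore gives the lift $t=-f(\theta)-f(\theta')$ only over the universal cover of the torus, not a single closed Legendrian in $\bC^2\times\bR$. This periodicity is exactly what produces the degenerate levels $\bZ f(2\pi)$, and the quantization must be set up to accommodate it.
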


Here, Liouville regularity in the theorem is defined as follows. 
Write $(x,\xi)$ for the coordinates on $\bR^2$ and $\lambda=\xi\,dx$ for the Liouville $1$-form.
Let $e\colon\bR\to\bR/2\pi\bZ\simeq S^1$ denote the quotient map.

\begin{definition}\label{def:curve}
    A Jordan curve $c\colon S^1\to\bR^2$ is said to be \emph{Liouville regular} if there exists a sequence of $C^1$-embeddings $c_n\colon S^1\to\bR^2$ such that
    \begin{enumerate}[(1)]
    \item $c_n\to c$ uniformly;
    \item defining a function $f_n \colon \bR \to \bR$ by 
    \[
        f_n(t) \coloneqq \int_0^t(c_n\circ e)^*\lambda,
    \]
    there exists a continuous function $f \colon\bR\to\bR$ such that $f_n \to f$ uniformly on every compact subset of $\bR$.
    \end{enumerate}
    We call $f$ a \emph{Liouville primitive} for $c$.
    A sequence $(c_n,f_n)_n$ satisfying the above conditions is called an \emph{approximating sequence} for $(c,f)$.
\end{definition}

\begin{remark}\label{rem:naming}
    In the previous paper~\cite{AI_peg}, the approximating embeddings were taken smooth, and the property in \cref{def:curve} was expressed by saying that the Jordan curve \emph{admits a continuous Legendrian lift}.
    This gives the same class of curves: a $C^1$-embedding can be approximated in $C^1$ by smooth embeddings, and the corresponding primitives converge uniformly, by diagonal argument that is identical to \cref{lem:diagonal} below.
    We therefore use the shorter term ``Liouville regular'' and work with $C^1$-approximations throughout.
\end{remark}

\begin{remark}[Legendrian interpretation]\label{rem:legendrian}
    Let $c \colon S^1 \to \bR^2$ be Liouville regular Jordan curve with Liouville primitive $f$ and $(c_n,f_n)_n$ be an approximating sequence for $(c,f)$.
    We equip $\bR^3_{x,\xi,z}$ with the contact form $dz-\xi\,dx$.  
    Then the curve $\widehat c_n$ defined by
    $\widehat c_n(t)=(c_n(t),f_n(t))$ is a Legendrian curve 
    and the sequence $(\widehat c_n(t))_n$ converges to a curve $\widehat c$ defined by 
    \[\widehat c(t)=(c(t),f(t)). \]
    Rigidity phenomena of Legendrian submanifolds are known in many situations. Especially, Dimitroglou-Rizell--Sullivan~\cite{DRSknots} proved a $C^0$-rigidity result for Legendrian knots. 
    Hence, it is reasonable to regard $\widehat c$ as a ``continuous Legendrian embedding''. 
    This explains the terminology used in~\cite{AI_peg}.
\end{remark}

The integral of $\lambda$ along a curve has appeared before in the study of the Square and Rectangular Peg Problems. 
Tao~\cite{TAO2017} developed an integration approach to the Square Peg Problem; Greene--Lobb~\cite{GL24floerhomologysquarepegs} constructed a Floer homology generated by the inscriptions of a rectangle in a real analytic Jordan curve, whose spectral invariants can be described by a Liouville primitive; and Schwartz~\cite{Schwartz} also studied such quantities. 
\medskip

The definition is naturally local along a curve, and it is useful to introduce the arc version. 
By a \emph{Jordan arc} we mean a continuous injective map from a compact interval to $\bR^2$; no condition is imposed on the two endpoints other than being distinct, as follows automatically from injectivity.

\begin{definition}\label{def:arc}
    A Jordan arc $\gamma\colon[a,b]\to\bR^2$ is said to be \emph{Liouville regular} if there exists a sequence of $C^1$-embeddings $\gamma_n\colon[a,b]\to\bR^2$ such that
    \begin{enumerate}[(1)]
    \item $\gamma_n\to\gamma$ uniformly;
    \item defining a function $f_n \colon [a,b] \to \bR$ by
    \[
      f_n(t) \coloneqq \int_a^t\gamma_n^*\lambda,
    \]
    there exists a continuous function $f \colon[a,b]\to\bR$ such that $f_n\to f$ uniformly.
    \end{enumerate}
    We again call $f$ a \emph{Liouville primitive} for $\gamma$. 
    A sequence $(\gamma_n,f_n)_n$ satisfying the above conditions is called an \emph{approximating sequence} for $(\gamma,f)$.
\end{definition}

The purpose of this paper is to investigate which curves and arcs are Liouville regular.

Our first criterion is in variational-theoretic. 
The arc formulation behaves well under restriction, reparametrization, and finite concatenation, and it is invariant under area-preserving bi-Lipschitz homeomorphisms.
These facts lead to the following local-to-global criterion.

\begin{theorem}\label{thm:main}
    Let $c\colon S^1\to\bR^2$ be a Jordan curve.
    Suppose that $c$ is a cyclic concatenation of finite many Jordan arcs, each of which is of one of the following three types:
    \begin{enumerate}[(i)]
        \item 
        $\gamma=(x_\gamma,\xi_\gamma)$ with $\xi_\gamma$ of finite $p$-variation and $x_\gamma$ of finite $q$-variation, for some $p,q\geq1$ satisfying
        \[
          \frac1p+\frac1q>1;
        \]
        \item $x_\gamma$ has bounded variation and $\xi_\gamma$ is continuous;
        \item the image of an arc of type \textup{(i)} or \textup{(ii)} under an area-preserving bi-Lipschitz homeomorphism of $\bR^2$.
    \end{enumerate}
    Then $c$ is Liouville regular.
\end{theorem}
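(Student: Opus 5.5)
We reduce \cref{thm:main} to three arc-level statements: each arc type is Liouville regular; regularity is preserved by area-preserving bi-Lipschitz homeomorphisms; and finitely many Liouville regular arcs glued cyclically into a Jordan curve give a Liouville regular curve.

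\textbf{Types (i) and (ii).} For type (i) we use the Young integral. Let $\gamma_n=(x_n,\xi_n)$ be piecewise-linear interpolations of $\gamma$ on partitions with mesh tending to $0$. Then $\xi_n$ has $p$-variation and $x_n$ has $q$-variation bounded uniformly by those of $\xi_\gamma$ and $x_\gamma$, and $\gamma_n\to\gamma$ uniformly. Choose $p'>p$ and $q'>q$ with $1/p'+1/q'>1$. By interpolation, $\gamma_n\to\gamma$ in $p'$- and $q'$-variation respectively. Young's inequality then gives uniform convergence of $f_n(t)=\int_a^t\xi_n\,dx_n$ to the Young integral $\int_a^t\xi\,dx$. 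For type (ii) we argue the same way: $\int\xi_n\,dx_n$ is controlled by $\|\xi_n-\xi\|_\infty\cdot\mathrm{Var}(x_n)$ plus a Riemann--Stieltjes term, and this converges uniformly.

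Polygonal approximants need not be $C^1$ or embedded. To fix this we take a sufficiently fine inscribed polygon, which is embedded once the mesh is small relative to the modulus of injectivity of $\gamma$ (possibly after a small perturbation of the vertices). We then round the corners in tiny neighborhoods. This changes $f_n$ by an amount bounded by the areas of the rounded pieces, which can be made arbitrarily small. A diagonal argument in the spirit of \cref{lem:diagonal} finishes this step.

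\textbf{Type (iii).} Let $\Phi$ be area preserving and bi-Lipschitz. Then $\Phi^*\lambda-\lambda$ is closed, so $\Phi^*\lambda-\lambda=dh$ for some function $h$, which is Lipschitz on compact sets. If $\Phi$ were $C^1$, the curves $\Phi\circ\gamma_n$ would approximate $\Phi\circ\gamma$ with primitives $f_n+h\circ\gamma_n-h\circ\gamma(a)$, and these converge uniformly. For a merely bi-Lipschitz $\Phi$ we need two extra steps. First, $\Phi\circ\gamma_n$ is only Lipschitz, so we smooth it. Second, the identity $\int(\Phi\circ\gamma_n)^*\lambda=\int\gamma_n^*\Phi^*\lambda$ must be justified for Lipschitz $\Phi$ and $C^1$ curves. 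We would handle this by mollifying $\Phi$; the mollified maps are only approximately area preserving, so we would control the error by the enclosed areas, which are bounded. \emph{We expect this step to be the main obstacle.} The first thing to try is to approximate each $\gamma_n$ by polygons and compare $\int(\Phi\circ\gamma_n)^*\lambda$ with signed areas via Stokes' theorem, where area preservation of $\Phi$ holds in the measure-theoretic sense.

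\textbf{Concatenation.} Given approximating sequences for the consecutive arcs, we first adjust each approximant near its endpoints so that it matches the true endpoints exactly and so that adjacent approximants meet $C^1$-smoothly. We do this with small local modifications whose contribution to $\int\lambda$ is small. We also need disjointness of different arcs' approximants away from the junctions; this follows from uniform convergence and the positive distance between the non-adjacent parts. The sum of the primitives then converges on $[0,2\pi]$ and extends quasi-periodically to $\bR$, which gives a Liouville primitive for $c$.
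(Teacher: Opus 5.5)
\textbf{Main gap: embeddedness of the polygonal approximants.} You assert that a sufficiently fine inscribed polygon of a Jordan arc is embedded once the mesh is small relative to the modulus of injectivity, possibly after perturbing vertices. This is false, and it is exactly where the difficulty of types (i) and (ii) lies. The modulus of injectivity only tells you that two crossing chords come from nearby parameter intervals. It does not stop a chord from crossing chords a few steps later.

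A concrete counterexample is the rectifiable spiral $\gamma_{\alpha,0}$ with $\alpha>1$ from \cref{sec:examples}, near its centre. There consecutive turns are only $O(\theta^{-\alpha-1})$ apart, whereas a chord subtending an angle $\psi$ of one turn has sagitta of order $\theta^{-\alpha}\psi^2$. As soon as $\psi\gg\theta^{-1/2}$, the chord dips inside the next turn. Place a vertex of the next turn at the middle angle and its neighbours at the end angles; this gives two transversal crossings. Near the centre each turn has tiny parameter length, so such partitions exist with arbitrarily small mesh. Transversal crossings also survive small perturbations of the vertices, so perturbing does not help.

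What you actually need is an \emph{existence} statement: arbitrarily fine partitions, containing prescribed points, whose interpolation is a simple polygon. This is the nontrivial Boedihardjo--Geng theorem quoted as \cref{lem:bg-interpolation}, and it is the key input the paper uses for types (i) and (ii). Once you have it, the rest of your argument for (i) and (ii) goes through. The reason is that the variation bounds hold for \emph{every} partition: the $p$-variation of a piecewise-linear interpolation never exceeds that of the original function.

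\textbf{Type (iii).} The same problem reappears. Mollifying the Lipschitz embedded arc $\Phi\circ\gamma_n$ need not produce an embedding. The paper avoids this by applying the bounded-variation case (\cref{prop:bv}, which again rests on simple interpolation) to $\Phi\circ\gamma_n$, and then invoking \cref{lem:diagonal}.

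For the cocycle identity, the paper never differentiates $\Phi$. It uses Green's theorem in winding-number form for rectifiable closed curves, invariance of the winding number under orientation-preserving homeomorphisms, and preservation of Lebesgue measure (\cref{lem:cocycle}). Your mollification route can be made to work, but for a closed curve $\Gamma$ the error is $\int\operatorname{ind}_\Gamma\,(\det D\Phi_\varepsilon-1)$. It vanishes in the limit because $\det D\Phi_\varepsilon\to1$ in $L^1_{\mathrm{loc}}$, not because enclosed areas are bounded. Also, for orientation-reversing $\Phi$ the closed form is $\Phi^*\lambda+\lambda$, so you should first compose with $(x,\xi)\mapsto(x,-\xi)$.

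\textbf{Concatenation.} Approximants of two adjacent arcs may cross each other throughout a small neighbourhood of the junction point $P$, so matching endpoints is not enough. You need to cut them at the first entrance to and last exit from a small ball around $P$, and reconnect inside the ball. Continuity of the limiting primitives then controls the discarded pieces, as in \cref{prop:concat}.
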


In particular, a Jordan curve of finite $p$-variation for some $p<2$ is Liouville regular.

Our second criterion is conformal rather than variation-theoretic.
Let $\Omega\subset\bC$ be a bounded Jordan domain and let $\varphi\colon\bD\to\Omega$ be a conformal mapping, where $\bD$ denotes the unit open disk.
We show in \cref{sec:holder} that it is enough for the images of the radial tails $[r,1)e^{i\theta}$ to have lengths tending to zero uniformly in $\theta$.
To state the result more precisely, we introduce a quantity
\begin{equation*}
    \tau_\varphi(r)
    \coloneqq 
    \sup_{\theta\in\bR}
    \int_r^1\left|\varphi'(se^{i\theta})\right|\,ds
    \in[0,\infty].
\end{equation*}

\begin{theorem}\label{thm:holder}
    If $\lim_{r\nearrow1}\tau_\varphi(r)=0$, then $\partial\Omega$ is Liouville regular.
    In particular, if $\Omega$ is a H\"older domain, then $\partial\Omega$ is Liouville regular.
    Consequently, the boundary of every bounded Jordan John domain is Liouville regular, and every planar quasicircle is Liouville regular.
\end{theorem}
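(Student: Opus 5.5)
The natural approximating curves are the images of concentric circles under the conformal map. Fix $r_n\nearrow 1$ and set $c_n(\theta)\coloneqq\varphi(r_ne^{i\theta})$, viewed in $\bR^2\simeq\bC$. Each $c_n$ is a real-analytic embedding of $S^1$, so it is in particular $C^1$. By Carathéodory's theorem, $\varphi$ extends to a homeomorphism $\overline{\bD}\to\overline\Omega$. Hence $c_n\to c\coloneqq\varphi(e^{i\,\cdot})|_{S^1}$ uniformly, and $c$ parametrizes $\partial\Omega$. It remains to show that $f_n(t)=\int_0^t(c_n\circ e)^*\lambda$ converges locally uniformly on $\bR$. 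By the Cauchy criterion it suffices to bound $\sup_{t\in[-T,T]}|f_n(t)-f_m(t)|$ for $r_m<r_n$.

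The key step is a Stokes-type comparison. Consider the region in $\bD$ bounded by the arc $\{r_me^{i\theta}:0\le\theta\le t\}$, the radial segment $[r_m,r_n]e^{it}$, the arc at radius $r_n$ traversed backwards, and the segment $[r_m,r_n]$ on the real axis. Pull $\lambda$ back by $\varphi$ and apply Stokes on this annular sector, with $d\lambda=d\xi\wedge dx$. This gives
\[
f_n(t)-f_m(t)=\pm\int_{\varphi(S_t)}d\xi\wedge dx+\int_{\varphi([r_m,r_n]e^{it})}\lambda-\int_{\varphi([r_m,r_n])}\lambda ,
\]
where $S_t$ is the annular sector.

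We bound the three terms separately.
\begin{itemize}
\item[(a)] The area term is at most the Euclidean area of $\varphi(\{r_m<|z|<r_n\})$ times $\lceil |t|/2\pi\rceil$, since $\varphi$ is injective and covers each point of the annulus at most $\lceil|t|/2\pi\rceil$ times. That area tends to $0$ because $\Omega$ is bounded, so $\operatorname{area}(\Omega)<\infty$, and the annuli exhaust a shrinking neighborhood of $\partial\bD$.
\item[(b)] Each radial term is bounded by $\sup_{\Omega}|\xi|\cdot\int_{r_m}^{r_n}|\varphi'(se^{i\theta})|\,ds\le C\,\tau_\varphi(r_m)$. Here $C$ depends only on the diameter of $\Omega$.
\end{itemize}
Thus $\sup_{|t|\le T}|f_n-f_m|\le(1+T/2\pi)\operatorname{area}(\varphi(\{r_m<|z|<1\}))+2C\tau_\varphi(r_m)\to0$. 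So $f_n$ is locally uniformly Cauchy, and its limit $f$ is continuous. Hence $\partial\Omega$ is Liouville regular. The main care needed is the orientation and winding bookkeeping for $t$ outside $[0,2\pi]$. It is cleanest to write $f_n(t)=\lfloor t/2\pi\rfloor f_n(2\pi)+f_n(t\bmod 2\pi)$ and handle $t\in[0,2\pi]$ and $t=2\pi$ separately. For the full loop, $f_n(2\pi)=\pm\operatorname{area}\varphi(r_n\bD)\to\pm\operatorname{area}\Omega$ directly.

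For the consequences, we use the following facts.
\begin{itemize}
\item If $\Omega$ is Hölder, i.e.\ $\varphi$ is $\alpha$-Hölder on $\bD$, then the standard estimate $|\varphi'(z)|\le C(1-|z|)^{\alpha-1}$ holds; it follows from Cauchy's estimate on the disk of radius $(1-|z|)/2$. Integrating gives $\tau_\varphi(r)\le C'(1-r)^{\alpha}\to0$.
\item Bounded Jordan John domains are Hölder domains; this is a known result due to Becker–Pommerenke and Näkki–Väisälä.
\item A quasicircle bounds a quasidisk, which is John, so every quasicircle is Liouville regular. Alternatively, a quasiconformal reflection gives Hölder continuity of $\varphi$ directly. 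For an unbounded quasicircle one first applies a Möbius map; we interpret "planar quasicircle" as bounded, since Jordan curves are compact.
\end{itemize}
These are citations rather than new arguments. The main obstacle is step (a)–(b), and specifically recognizing that the radial error is exactly what $\tau_\varphi$ controls.
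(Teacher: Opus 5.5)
Your proposal is correct and follows essentially the same route as the paper: the analytic curves $\varphi(re^{i\theta})$ serve as approximants, and Green's theorem on the annular sector $\{r\le|z|\le s,\ 0\le\arg z\le t\}$ gives $|f_s(t)-f_r(t)|\le\Area(\Omega\setminus\varphi(r\bD))+2M\tau_\varphi(r)$ with $M=\sup_{\overline\Omega}|\xi|$; the periodicity $f_r(t+2\pi)=f_r(t)+f_r(2\pi)$ passes from one period to all of $\bR$, and the Cauchy estimate $|\varphi'(z)|\le C(1-|z|)^{\alpha-1}$ together with the inclusions quasidisk $\subset$ John $\subset$ H\"older gives the consequences. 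The only nit is that the constant in your step (b) is $\sup_{\overline\Omega}|\xi|$, which depends on the position of $\Omega$ and not merely on its diameter; it is still finite because $\Omega$ is bounded.
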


See \cref{def:horder} for the definition of H\"order domains.
The standard inclusions relevant here are
\[
  \{\text{quasidisks}\}
  \subsetneq
  \{\text{John domains}\}
  \subsetneq
  \{\text{H\"older domains}\};
\]
both inclusions are proper even among simply connected planar domains.
Thus \cref{thm:holder} genuinely goes beyond the quasicircle case and also explains how the John-domain examples fit into the present framework.

Combining \cref{thm:main,thm:holder} with \cref{thm:peg} gives the following.

\begin{corollary}\label{cor:main-peg}
    Let $R$ be any rectangle in $\bR^2$.
    Every Jordan curve satisfying the hypotheses of \cref{thm:main}, and every curve covered by \cref{thm:holder}, inscribes a rectangle similar to $R$.
\end{corollary}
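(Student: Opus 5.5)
The corollary is a direct combination of results already stated, so the plan is to reduce it to \cref{thm:peg} once Liouville regularity is in hand. Fix a rectangle $R$ and a Jordan curve $c$ in one of the two classes. If $c$ satisfies the hypotheses of \cref{thm:main}, that is, $c$ is a cyclic concatenation of finitely many Jordan arcs of types (i)--(iii), then \cref{thm:main} shows that $c$ is Liouville regular in the sense of \cref{def:curve}. If instead $c$ parametrizes $\partial\Omega$ for a bounded Jordan domain $\Omega$ with $\tau_\varphi(r)\to0$, then \cref{thm:holder} gives the same conclusion; this covers in particular H\"older domains, John domains and quasicircles. In either case, \cref{thm:peg} applied to $c$ produces an inscribed rectangle similar to $R$.

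The one point that needs a little care is that \cref{thm:holder} is phrased for the set $\partial\Omega$, while \cref{def:curve} and \cref{thm:peg} concern a parametrized curve $c\colon S^1\to\bR^2$. So I will first check that Liouville regularity does not depend on the parametrization. Let $c$ be Liouville regular with approximating sequence $(c_n,f_n)_n$, and let $h\colon S^1\to S^1$ be a homeomorphism. I would replace $h$ by diffeomorphisms $h_m$ that converge to $h$ uniformly; if $h$ reverses orientation, I compose with $t\mapsto -t$, which only changes the sign of the primitive. The curves $c_n\circ h_m$ are then $C^1$-embeddings. Their primitives are the primitives $f_n$ composed with a lift of $h_m$ and shifted by the constant value at the lift of $h_m(0)$. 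A diagonal choice of $m=m(n)$, in the spirit of \cref{lem:diagonal}, then gives uniform convergence on compacta, using the uniform continuity of $f$ on compacta.

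The property of inscribing a rectangle similar to $R$ depends only on the image of the curve. So even without this reparametrization step, it would be enough that some single parametrization of $\partial\Omega$ is Liouville regular. That is exactly what \cref{thm:holder} supplies, for instance through the boundary extension of $\varphi$ given by Carath\'eodory's theorem. I do not expect any real obstacle: all the substantive work sits in \cref{thm:peg,thm:main,thm:holder}, and the corollary is logical bookkeeping on top of them.
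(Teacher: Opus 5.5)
Your proposal is correct and matches the paper, which obtains the corollary simply by combining \cref{thm:main} and \cref{thm:holder} (which give Liouville regularity) with \cref{thm:peg}. The parametrization check you add is already in the paper as \cref{lem:reparam}, and the proof of \cref{thm:radial-tail} directly produces the Liouville regular parametrization $\theta\mapsto\varphi(e^{i\theta})$. One harmless slip: reversing orientation turns the primitive into $t\mapsto f_n(-t)$, not into its negative.
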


We emphasize that the class of curves known to inscribe a rectangle of any similarity class is \emph{not} enlarged here: it remains exactly the one obtained in our previous paper~\cite{AI_peg}.

We give a family of spirals for which Liouville regularity is exactly controlled by convergence of an improper signed-area integral.
We also construct a different non-example that is smooth away from one endpoint and differentiable even at that endpoint: every individual coil makes only finitely many turns, but the primitive has arbitrarily large variation inside coils accumulating at the endpoint.
Finally, we construct a Liouville regular Jordan arc of positive measure, showing that the approximation condition is not a smallness condition on the image in the sense of Lebesgue measure.
\smallskip

The paper is organized as follows.
In \cref{sec:general} we explore general properties of Liouville regular curves and arcs.
In \cref{sec:variation} we prove the variation criteria, and in \cref{sec:holder} we develop the radial-tail criterion and prove \cref{thm:holder}.
The bi-Lipschitz symplectic invariance is proved in \cref{sec:bilip}.
Finally, \cref{sec:examples,sec:positive} discuss the quadratic threshold, non-examples, and positive-area examples.
\smallskip

During the preparation of this paper, we noticed that Li and Pan~\cite{LiPan} also proved that Jordan curves of finite $p$-variation for some $p<2$ are Liouville regular in the sense of \cref{def:curve}, which implies the rectangular peg theorem for them, combined with \cref{thm:peg} in our previous paper \cite{AI_peg}.

\section*{Acknowledgments}

The authors thank Shouhei Honda for asking whether the class of Jordan curves in \cref{def:curve} is characterized by the defining function of its interior domain. 
The question motivates the study in this paper.
They also thank Shigeki Aida for pointing out references on Young integration.

\section*{Use of AI}

The starting point of this paper was a question we asked ChatGPT and Claude: can the Liouville form be integrated along a fractal curve such as the Koch curve? In reply they pointed us to Young integration, a theory of which the authors then knew very little. This paper would not exist without these AI systems.
More specifically, in writing this paper, we used AI in the following ways.
\begin{itemize}
    \item A part of the proof strategy for \cref{prop:pq-variation,{prop:bv}}, that is, the use of \cref{lem:bg-interpolation} proved in \cite{BoedihardjoGeng}, was proposed by ChatGPT-5.6 Sol.
    \item A part of mathematical argument in the proof of \cref{thm:holder} was generated by ChatGPT-5.6 Sol.
    \item Adding the log term to the spiral example in \cref{sec:examples} was proposed by  ChatGPT-5.6 Sol.
    \item We used ChatGPT-5.6 Sol and Claude Opus 5 to assist in writing English explanations and in polishing our manuscript.
\end{itemize}
The text of this paper has been written under the full responsibility of the authors.

\section{General properties of Liouville regular curves and arcs}\label{sec:general}

In this section, we explore basic properties of Liouville regular curves and arcs.

\subsection{Elementary lemmas}

Here we prove some lemmas for Liouville regular arcs.

\begin{lemma}\label{lem:diagonal}
    Let $\gamma\colon[a,b]\to\bR^2$ be a Jordan arc.  
    Let $(\widetilde\gamma^{(n)})_n$ be a sequence of Liouville regular Jordan arcs, with Liouville primitives $f^{(n)}$.  
    Assume
    \[
      \widetilde\gamma^{(n)}\to\gamma,
      \quad
      f^{(n)}\to f
    \]
    uniformly, where $f$ is continuous.  
    Then $\gamma$ is Liouville regular with Liouville primitive $f$.
\end{lemma}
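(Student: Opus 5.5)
This is a diagonal argument. For each $n$, the arc $\widetilde\gamma^{(n)}$ is Liouville regular with primitive $f^{(n)}$, so by \cref{def:arc} we can fix an approximating sequence $(\widetilde\gamma^{(n)}_m, f^{(n)}_m)_m$ of $C^1$-embeddings. These satisfy $\widetilde\gamma^{(n)}_m\to\widetilde\gamma^{(n)}$ and $f^{(n)}_m\to f^{(n)}$ uniformly on $[a,b]$ as $m\to\infty$. Here
\[
f^{(n)}_m(t)=\int_a^t(\widetilde\gamma^{(n)}_m)^*\lambda .
\]

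For each $n$ we choose an index $m_n$ large enough that
\[
\sup_{[a,b]}\bigl|\widetilde\gamma^{(n)}_{m_n}-\widetilde\gamma^{(n)}\bigr|<\tfrac1n
\quad\text{and}\quad
\sup_{[a,b]}\bigl|f^{(n)}_{m_n}-f^{(n)}\bigr|<\tfrac1n .
\]
We then set $\gamma_n\coloneqq\widetilde\gamma^{(n)}_{m_n}$ and $f_n\coloneqq f^{(n)}_{m_n}$. Each $\gamma_n$ is a $C^1$-embedding $[a,b]\to\bR^2$, and $f_n$ is exactly the function $t\mapsto\int_a^t\gamma_n^*\lambda$ attached to it in \cref{def:arc}.

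The triangle inequality gives
\[
\|\gamma_n-\gamma\|_\infty\le \tfrac1n+\|\widetilde\gamma^{(n)}-\gamma\|_\infty\to0,
\qquad
\|f_n-f\|_\infty\le\tfrac1n+\|f^{(n)}-f\|_\infty\to0 .
\]
By hypothesis $\gamma$ is a Jordan arc and $f$ is continuous, so $(\gamma_n,f_n)_n$ is an approximating sequence for $(\gamma,f)$. Hence $\gamma$ is Liouville regular with Liouville primitive $f$.

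There is no real obstacle. The only point to check is that the primitives are all based at the same point $a$, so that no additive constants need adjusting. This holds because all arcs are parametrized on the common interval $[a,b]$. The curve version mentioned in \cref{rem:naming} works the same way, with uniform convergence on compacts of $\bR$ handled by taking the supremum over $[-n,n]$ at step $n$.
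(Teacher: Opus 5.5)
Your proposal is correct and is the same diagonal argument as the paper's proof: you fix approximating sequences for each $\widetilde\gamma^{(n)}$, choose $m_n$ so that both the $C^0$ error and the primitive error are below $1/n$, and conclude by the triangle inequality. Your remark that all primitives share the base point $a$ (and your $[-n,n]$ treatment of the cyclic case) just makes explicit what the paper leaves implicit.
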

\begin{proof}
    For each $n$, choose an approximating sequence for $(\widetilde\gamma^{(n)},f^{(n)})$, denoted by $(\gamma_m^{(n)},f_m^{(n)})_m$.  Choose $m(n)$ sufficiently large that
    \[
        \|\gamma_{m(n)}^{(n)}-\widetilde\gamma^{(n)}\|_\infty<\frac1n,
        \quad
        \|f_{m(n)}^{(n)}-f^{(n)}\|_\infty<\frac1n.
    \]
    Then $(\gamma_{m(n)}^{(n)},F_{m(n)}^{(n)})_n$ is an approximating sequence for $(\gamma,f)$.
\end{proof}

The same argument applies to cyclic parametrizations of Jordan curves, with uniform convergence of primitives on compact subsets of $\bR$.

\begin{lemma}\label{lem:restriction}
    Every subarc of a Liouville regular Jordan arc or Jordan curve is also Liouville regular.
\end{lemma}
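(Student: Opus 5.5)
The idea is to restrict an approximating sequence to the subarc and renormalize the primitive by subtracting its value at the left endpoint.

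\emph{Arc case.} Let $\gamma\colon[a,b]\to\bR^2$ be Liouville regular, with approximating sequence $(\gamma_n,f_n)_n$ for $(\gamma,f)$, and let $[a',b']\subset[a,b]$ with $a'<b'$. The restrictions $\gamma_n|_{[a',b']}$ are still $C^1$-embeddings, and they converge uniformly to $\gamma|_{[a',b']}$. Their primitives are
\[
  g_n(t)=\int_{a'}^t\gamma_n^*\lambda=f_n(t)-f_n(a'),
\]
which converge uniformly on $[a',b']$ to the continuous function $g(t)=f(t)-f(a')$, since $f_n(a')\to f(a')$. Hence $(\gamma_n|_{[a',b']},g_n)_n$ is an approximating sequence, and $\gamma|_{[a',b']}$ is Liouville regular with Liouville primitive $g$.

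\emph{Curve case.} Let $c\colon S^1\to\bR^2$ be Liouville regular, with approximating sequence $(c_n,f_n)_n$. A subarc is parametrized by $\gamma=c\circ e|_{[a,b]}$ with $b-a<2\pi$ (or $b-a\le 2\pi$ minus an endpoint, so that the arc is injective). Then $\gamma_n\coloneqq c_n\circ e|_{[a,b]}$ is a $C^1$-embedding of $[a,b]$: it is injective because $e$ is injective on $[a,b]$ and $c_n$ is an embedding, and it has nonvanishing derivative. It converges uniformly to $\gamma$. Its primitive is $f_n(t)-f_n(a)$, which converges uniformly on the compact set $[a,b]$ to $f(t)-f(a)$, by uniform convergence of $f_n$ on compact subsets of $\bR$.

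\emph{Reparametrizations.} A subarc may also be given by a different parametrization, say $\gamma\circ\phi$ with $\phi$ a homeomorphism of intervals. In that case we would use $\gamma_n\circ\phi$ only if $\phi$ is $C^1$ with nonvanishing derivative. In general we would instead precompose with $C^1$ diffeomorphisms $\phi_k$ converging uniformly to $\phi$. Uniform continuity of $\gamma$ and $f$ handles the convergence, and \cref{lem:diagonal} closes the argument.

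There is no real obstacle in this proof; the only care needed is the renormalization of the primitive at the new initial point and ensuring that the restricted maps remain embeddings.
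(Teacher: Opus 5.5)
Your proposal is correct and matches the paper's proof: restrict an approximating sequence to the subinterval and subtract the value of its primitive at the new left endpoint, with locally uniform convergence handling the curve case. Your extra paragraph on reparametrized subarcs is what the paper proves separately in \cref{lem:reparam}, so it is not needed here; also, for a subarc of a curve, injectivity needs $b-a<2\pi$ strictly, so your parenthetical allowing $b-a\le 2\pi$ should be dropped.
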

\begin{proof}
    It is enough to restrict an approximating sequence to the corresponding compact interval and subtract the value of its primitive at the left endpoint.
\end{proof}

\begin{lemma}\label{lem:reparam}
    Liouville regularity is invariant under reparametrization by a homeomorphism of compact intervals.
    The circle version is invariant under homeomorphisms of $S^1$.
\end{lemma}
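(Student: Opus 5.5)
Since the approximating maps $\gamma_n$ are $C^1$ but a general reparametrization $h$ is only a homeomorphism, $\gamma_n\circ h$ need not be $C^1$. The plan is therefore to replace $h$ by $C^1$ diffeomorphisms $h_n\to h$ and to use the change-of-variables formula for the primitives. Let $\gamma\colon[a,b]\to\bR^2$ be Liouville regular with Liouville primitive $f$ and approximating sequence $(\gamma_n,f_n)_n$. Let $h\colon[a',b']\to[a,b]$ be a homeomorphism. We treat first the case where $h$ is increasing.

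\textbf{Step 1: smooth approximation of $h$.} We choose $C^1$ diffeomorphisms $h_n\colon[a',b']\to[a,b]$ with $h_n(a')=a$, $h_n(b')=b$, $h_n'>0$ and $h_n\to h$ uniformly. Concretely, let $B_n h$ be the $n$-th Bernstein polynomial of $h$, transported to $[a',b']$. It converges uniformly to $h$, fixes the endpoint values, and is nondecreasing because $h$ is. Let $\ell$ be the increasing affine map $[a',b']\to[a,b]$, and set
\[
  h_n=(1-\tfrac1n)B_nh+\tfrac1n\ell .
\]
Then $h_n'\ge \tfrac1n\ell'>0$, the endpoints are still fixed, and $h_n\to h$ uniformly. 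This construction is the only non-formal ingredient and the step I would check most carefully.

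\textbf{Step 2: convergence of curves and primitives.} The maps $\gamma_n\circ h_n$ are $C^1$ embeddings. By change of variables,
\[
  \int_{a'}^s(\gamma_n\circ h_n)^*\lambda=\int_a^{h_n(s)}\gamma_n^*\lambda=f_n(h_n(s)).
\]
For the curves we have
\[
  \|\gamma_n\circ h_n-\gamma\circ h\|_\infty\le\|\gamma_n-\gamma\|_\infty+\|\gamma\circ h_n-\gamma\circ h\|_\infty\to0 ,
\]
where the second term tends to zero by uniform continuity of $\gamma$ together with $h_n\to h$. The same estimate with $f_n,f$ in place of $\gamma_n,\gamma$ gives $f_n\circ h_n\to f\circ h$ uniformly. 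Hence $\gamma\circ h$ is Liouville regular with Liouville primitive $f\circ h$.

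\textbf{Step 3: decreasing $h$ and the circle version.} If $h$ is decreasing, we apply the same construction with decreasing $h_n$ and $h_n(a')=b$. Change of variables now gives the primitive $f_n(h_n(s))-f_n(b)$, which converges uniformly to $f\circ h-f(b)$.

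For a homeomorphism $\phi$ of $S^1$, we lift it to a homeomorphism $H$ of $\bR$ with $H(t+2\pi)=H(t)\pm2\pi$; say the sign is $+$, the other case being analogous. We approximate $H$ by $H_n=(1-\tfrac1n)(H*\rho_n)+\tfrac1n(\mathrm{id}+H(0))$, where $\rho_n$ is a mollifier. Convolution preserves monotonicity and the equivariance $H(t+2\pi)=H(t)+2\pi$, so each $H_n$ is a $C^1$ lift of a circle diffeomorphism, and $H_n\to H$ uniformly on $\bR$. The primitive of $c_n\circ\phi_n$ is then $f_n\circ H_n-f_n(H_n(0))$.

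This converges to $f\circ H-f(H(0))$ uniformly on compact subsets of $\bR$. The reason is that $H_n$ maps a compact set $K$ into a fixed compact set $K'$, and on $K'$ both $f_n\to f$ uniformly and $f$ is uniformly continuous. So the argument of Step 2 applies verbatim.
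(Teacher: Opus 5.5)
Your proof is correct and follows the same route as the paper. Both approximate $h$ uniformly by $C^1$ diffeomorphisms $h_n$ of the same orientation, use the change-of-variables identity $\int (\gamma_n\circ h_n)^*\lambda = f_n\circ h_n - f_n(h_n(\min J))$, conclude by uniform continuity of $\gamma$ and $f$, and handle the circle case through equivariant lifts; the only difference is that you construct the approximations explicitly (Bernstein polynomials, resp.\ mollification, plus a small affine term forcing $h_n'>0$), which the paper simply asserts exist.
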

\begin{proof}
    Let $h\colon J\to I$ be a homeomorphism of compact intervals. 
    Approximate $h$ uniformly by $C^1$ diffeomorphisms $h_n\colon J\to I$ having the same orientation as $h$. 
    If $(\gamma_n,f_n)_n$ approximates $(\gamma,f)$, then $\gamma_n\circ h_n$ is a $C^1$-embedding and its normalized primitive is
    \[
      f_n\circ h_n-f_n(h_n(\min J)).
    \]
    Uniform convergence follows from the uniform continuity of $f$ and the uniform convergence of $h_n$.  
    The circle case is obtained by periodic lifts of the homeomorphisms.
\end{proof}

\begin{proposition}\label{prop:C2_area_pres}
    Let $\gamma \colon [a,b] \to \bR^n$ be a Liouville regular arc and $\varphi \colon \bR^2 \to \bR^2$ be an orientation-preserving area-preserving $C^2$-diffeomorphism.
    Then, the curve $\varphi \circ \gamma$ is also Liouville regular.
\end{proposition}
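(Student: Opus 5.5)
Take an approximating sequence $(\gamma_n,f_n)_n$ for $(\gamma,f)$ and use $\varphi\circ\gamma_n$ as the approximating embeddings for $\varphi\circ\gamma$. These are $C^1$-embeddings, and $\varphi\circ\gamma_n\to\varphi\circ\gamma$ uniformly because $\varphi$ is uniformly continuous on a compact neighborhood of the images. The work is to show that the primitives $g_n(t)=\int_a^t(\varphi\circ\gamma_n)^*\lambda$ converge uniformly.

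Since $\varphi$ preserves the area form $d\xi\wedge dx$ up to the sign convention (orientation-preserving, area-preserving), we have $d(\varphi^*\lambda-\lambda)=0$. Hence on $\bR^2$ there is a function $S$ with $\varphi^*\lambda=\lambda+dS$. Because $\varphi$ is $C^2$, the form $\varphi^*\lambda$ has $C^1$ coefficients, so $S$ is $C^2$; only continuity of $S$ is needed. Then
\[
  g_n(t)=\int_a^t\gamma_n^*(\varphi^*\lambda)=f_n(t)+S(\gamma_n(t))-S(\gamma_n(a)).
\]
Since $f_n\to f$ uniformly, $\gamma_n\to\gamma$ uniformly, and $S$ is uniformly continuous on a compact set containing all the images, $g_n$ converges uniformly to
\[
  g(t)\coloneqq f(t)+S(\gamma(t))-S(\gamma(a)),
\]
which is continuous. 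Thus $\varphi\circ\gamma$ is Liouville regular with Liouville primitive $g$.

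The only point needing care is the existence of $S$, that is, exactness of $\varphi^*\lambda-\lambda$. This follows from closedness by the Poincaré lemma on $\bR^2$, and closedness is exactly the area-preserving, orientation-preserving hypothesis. The $C^2$ assumption ensures that $\varphi^*\lambda$ is a $C^1$ form, so $d$ of it makes classical sense. (The statement writes $\bR^n$, but $\gamma$ is evidently meant to take values in $\bR^2$.)
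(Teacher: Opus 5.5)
Your proposal is correct and follows essentially the same route as the paper. Both write $\varphi^*\lambda-\lambda=dh$ by closedness and the Poincar\'e lemma, take $\varphi\circ\gamma_n$ as the approximating embeddings, and note that their primitives $f_n+h\circ\gamma_n-h(\gamma_n(a))$ converge uniformly to the continuous function $f+h\circ\gamma-h(\gamma(a))$.
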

\begin{proof}
    Since $\varphi^* d\lambda = d\lambda$, we have $d(\varphi^*\lambda - \lambda)=0$ on $\bR^2$.
    Hence, there exists a $C^2$-function $h$ on $\bR^2$ such that $\varphi^*\lambda - \lambda=dh$.

    Let $f$ be a Liouville primitive for $\gamma$ and choose an approximating sequence $(\gamma_n,f_n)_n$ for $(\gamma,f)$.
    Then $\varphi \circ \gamma_n$ is of class $C^1$ and $\phi \circ \gamma_n \to \varphi \circ \gamma$ uniformly.
    We have 
    \begin{align*}
        \int_a^t (\varphi \circ \gamma_n)^* \lambda 
        & = 
        \int_a^t \gamma_n^* \varphi^* \lambda \\
        & = 
        \int_a^t \gamma_n^* \lambda + \int_a^t \gamma_n^* dh
        = 
        f_n(t) + h(\gamma_n(t)) - h(\gamma_n(a)).
    \end{align*}
    As $n \to \infty$, this converges to the continuous function $t \mapsto f(t) + h(\gamma(t)) - h(\gamma(a))$ uniformly.
\end{proof}

We will generalize the above proposition to the bi-Lipschitz case in \cref{sec:bilip}.

\subsection{Rounding polygonal embeddings}

We shall repeatedly use embedded polygonal approximations.  The following elementary observation allows us to pass from them to the $C^1$-approximations required in \cref{def:arc,def:curve}.

\begin{lemma}\label{lem:polygonal-rounding}
    Let $P\colon[a,b]\to\bR^2$ be an embedded polygonal arc, parametrized linearly on each edge.  
    For every $\varepsilon>0$ there exists a $C^1$-embedded arc $\widetilde P\colon[a,b]\to\bR^2$ with the same endpoints such that
    \[
      \|\widetilde P-P\|_\infty<\varepsilon
    \]
    and, after normalizing both primitives to vanish at $a$,
    \[
      \sup_{t\in[a,b]}
      \left|
        \int_a^t\widetilde P^*\lambda-
        \int_a^t P^*\lambda
      \right|<\varepsilon.
    \]
    The analogous statement holds for embedded polygonal Jordan curves, with the primitives compared on one period.
\end{lemma}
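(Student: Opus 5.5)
We round each corner of $P$ inside a tiny disk and compare the two primitives directly. Let the vertices of $P$ be $v_0=P(a),v_1,\dots,v_N=P(b)$, reached at parameters $a=t_0<t_1<\dots<t_N=b$. Because $P$ is embedded and has finitely many edges, there is $\delta_0>0$ with the following property: for each interior vertex $v_k$ ($1\le k\le N-1$), the closed disk $B_k=\overline{B}(v_k,\delta)$ with $\delta<\delta_0$ meets $P$ only in the two half-edges adjacent to $v_k$. Moreover the disks $B_k$ are pairwise disjoint and do not contain $v_0,v_N$. At an interior vertex the two adjacent edges are not antiparallel, since $P$ is injective. So inside $B_k$ the arc is a wedge of two segments meeting at an angle in $(0,2\pi)$ other than a straight reversal.

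Inside each $B_k$ we replace the wedge by a $C^1$ arc. For instance, take a circular arc tangent to both segments at the points where they meet the circle $\partial B(v_k,\delta/2)$; when the angle is a straight line, no change is needed. This replacement lies in the convex hull of the wedge portion inside $B(v_k,\delta/2)$, which is a thin triangle. Away from the disks we keep $P$ unchanged. The result is $C^1$ as a curve, and it is embedded: the modified pieces stay in disjoint disks, each piece is an embedded arc, and it does not meet the unchanged parts.

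For the parametrization, on each interval $[t_k-\eta_k,t_k+\eta_k]$ we reparametrize the new piece by a $C^1$ map with nonvanishing derivative, matching the linear speeds of $P$ at the two ends. Here $\eta_k$ is the parameter time $P$ spends in $B(v_k,\delta/2)$. Outside these intervals we set $\widetilde P=P$. This gives a $C^1$ embedding with $\|\widetilde P-P\|_\infty\le\delta$, since both curves stay in $B_k$ on the modified intervals.

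Next we compare the primitives. Outside the modified intervals both primitives have the same increments. Thus the difference $\int_a^t\widetilde P^*\lambda-\int_a^tP^*\lambda$ is locally constant there, and it jumps only across the modified intervals. Across the $k$-th interval, write $\widetilde\sigma_k$ and $\sigma_k$ for the pieces of $\widetilde P$ and $P$ on that interval. By Stokes' theorem, $\int_{\widetilde\sigma_k}\lambda-\int_{\sigma_k}\lambda$ equals plus or minus the area enclosed between the two pieces. That area is at most $\pi\delta^2$. Inside the interval we bound the partial difference crudely by
\[
\Bigl|\int\xi\,dx\Bigr|\le \bigl(\|P\|_\infty+1\bigr)\cdot\operatorname{length},
\]
and the length is $O(\delta)$ for both pieces. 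Summing over $N-1$ vertices, the total error is $O(N\delta)$, which is below $\varepsilon$ once $\delta$ is small.

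The one point needing care is embeddedness near the vertices, in particular for very sharp angles. The disjointness of $B_k$ from the non-adjacent edges takes care of this, so I expect no real obstacle.

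The Jordan curve case is the same argument with all vertices treated as interior, cyclically. Each vertex is rounded, and the primitives are compared over one period starting at a non-modified parameter.
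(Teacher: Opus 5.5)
Your proposal is correct and is essentially the paper's argument: round each corner inside small pairwise disjoint disks, then bound the change of the primitive by $\sup|\xi|$ times the $O(\delta)$ length of each modified piece, summed over the finitely many vertices (your Stokes area bound on the complete jumps is a harmless refinement the paper does not need). The only cosmetic slip is that the modified parameter interval around $t_k$ is in general $[t_k-\eta_k^-,t_k+\eta_k^+]$ rather than symmetric, since the two adjacent edges are traversed at different speeds.
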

\begin{proof}
    Choose pairwise disjoint disks around the finitely many vertices, so small that each disk meets the polygon only in the two incident edge pieces. 
    Inside each disk replace the corner by a $C^1$-embedded rounded corner agreeing with the polygon near the boundary of the disk. 
    The replacement can be chosen arbitrarily $C^0$-small and with length bounded by a fixed multiple of the disk radius. 
    On a compact neighborhood of the polygon the norm of $\lambda$ is bounded. 
    Hence, the change of the integral of $\lambda$ produced by each rounding tends to zero with the radius of its disk. 
    Taking the disks sufficiently small, and distributing the error among the finitely many vertices, gives the asserted uniform estimate for the primitives. 
    The cyclic case is similar.
\end{proof}

\subsection{Concatenation and closing}

Suppose that $\gamma^1\colon[0,b_1]\to\bR^2$ and
$\gamma^2\colon[0,b_2]\to\bR^2$ satisfy
$\gamma^1(b_1)=\gamma^2(0)$.  We write $\gamma^2*\gamma^1$ for their usual concatenation on $[0,b_1+b_2]$.

\begin{proposition}\label{prop:concat}
    For $i=1,2$, let $\gamma^i\colon[0,b_i]\to\bR^2$ be Liouville regular Jordan arcs.
    Assume $\gamma^1(b_1)=\gamma^2(0)$ and that the concatenation $\gamma^2*\gamma^1$ is injective.  
    Then $\gamma^2*\gamma^1$ is Liouville regular.
    If $f^i$ is a Liouville primitive of $\gamma^i$ normalized by $f^i(0)=0$, then a Liouville primitive of $\gamma^2*\gamma^1$ is given by
    \[
     f(t)=
     \begin{cases}
       f^1(t),& 0\leq t\leq b_1,\\
       f^1(b_1)+f^2(t-b_1),&b_1\leq t\leq b_1+b_2.
     \end{cases}
    \]
\end{proposition}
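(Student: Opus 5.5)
To prove \cref{prop:concat}, take approximating sequences $(\gamma^i_n,f^i_n)_n$ for $(\gamma^i,f^i)$, $i=1,2$. Then modify them near the junction so that they concatenate to a $C^1$-embedding whose primitive is close to the prescribed $f$. There are two problems. First, the endpoints $\gamma^1_n(b_1)$ and $\gamma^2_n(0)$ need not coincide. Second, even after they are joined, the two approximants may intersect each other, since each is only uniformly close to its arc, and the arcs share the point $p=\gamma^1(b_1)=\gamma^2(0)$. The plan is to work at a fixed small scale $\delta>0$, build a good approximant for that $\delta$, and conclude with \cref{lem:diagonal}.

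\textbf{Step 1 (separation away from the junction).} Fix $\delta>0$. By continuity choose $s_1<b_1$ and $s_2>0$ with $\gamma^1([s_1,b_1])$ and $\gamma^2([0,s_2])$ contained in the disk $B(p,\delta)$. By injectivity of the concatenation, the compact sets $\gamma^1([0,s_1])$ and $\gamma^2([s_2,b_2])$ are disjoint, and each is disjoint from $p$. So they have positive mutual distance $\eta$, and each has positive distance from $p$. For $n$ large, $\gamma^1_n([0,s_1])$ and $\gamma^2_n([s_2,b_2])$ lie within $\eta/3$ of the corresponding arcs, hence are disjoint. By \cref{lem:restriction}, the restrictions to these intervals are approximants of the corresponding subarcs, with primitives converging to the restricted $f^i$.

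\textbf{Step 2 (bridging).} Join the endpoint $\gamma^1_n(s_1)$ to $\gamma^2_n(s_2)$ by a $C^1$ arc $\beta$. It should meet the two pieces only at its endpoints, connect $C^1$-smoothly, and stay in a small neighbourhood of $B(p,\delta)$. Such a bridge exists because the complement of two disjoint embedded arcs in the plane is connected. In fact, the original middle portions $\gamma^1_n([s_1,b_1])\cup\gamma^2_n([0,s_2])$ already run from one endpoint near $p$ to the other, so one can perturb a path through them off the two outer pieces. Reparametrize $\beta$ over $[s_1,b_1+s_2]$ so that it passes the junction parameter $b_1$.

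\textbf{Step 3 (error control).} This is the main obstacle. Changes of $f$ on the middle interval are fine, since $f^1$ and $f^2$ oscillate little near the junction by uniform continuity. The real requirement is that $\int_\beta\lambda$ be close to $(f^1(b_1)-f^1(s_1))+f^2(s_2)$. The bridge, however, may wind or be long, and then it has uncontrolled area. I would handle this with Stokes' theorem in the following form. The closed loop formed by $\beta$ and the reversed original middle pieces $\gamma^1_n|_{[s_1,b_1]}$, $\gamma^2_n|_{[0,s_2]}$, together with a short segment closing the gap between $\gamma^1_n(b_1)$ and $\gamma^2_n(0)$, lies in $B(p,2\delta)$. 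Its $\lambda$-integral is therefore a signed area weighted by winding number. Construct $\beta$ to follow these middle pieces closely, as a thin $C^1$ pushoff that avoids self-crossings and the outer pieces. Then the winding numbers are bounded, and the enclosed signed area is at most $C\delta^2$ plus an arbitrarily small error. Making this pushoff construction rigorous, in the presence of possibly mutual intersections of the middle pieces, is the delicate point. A fallback is to first pass to polygonal approximants, for which curves in general position can be untangled by finitely many cut-and-reconnect moves of controlled area, and then apply \cref{lem:polygonal-rounding}. Once the error bound holds, the concatenated curve is $O(\delta)$-close to $\gamma^2*\gamma^1$, and its primitive is $o(1)$-close to $f$ as $\delta\to0$. \Cref{lem:diagonal} then finishes the proof.
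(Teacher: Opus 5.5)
\textbf{Gap in Step 3.} Steps 1 and 2 set things up sensibly, and you correctly identify the two difficulties. But the proof lives in Step 3, which you leave open, and the mechanism you sketch there does not work.

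First, the target is misstated. On the middle parameter interval the approximant's primitive is the running integral along $\beta$. So you need $\sup_u\bigl|\int_{\beta|_{[s_1,u]}}\lambda\bigr|$ to be small, not only the total $\int_\beta\lambda$. \Cref{lem:finite-coil} exhibits an embedded arc with small total action and arbitrarily large partial action.

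Second, an embedded thin pushoff of the middle path does not exist in general. Let $\sigma$ be the segment from $\gamma^1_n(b_1)$ to $\gamma^2_n(0)$ and $\mu=\gamma^2_n|_{[0,s_2]}*\sigma*\gamma^1_n|_{[s_1,b_1]}$. The two middle pieces are only $C^0$-close to arcs through $p$, so they may cross transversally. Then every curve sufficiently uniformly close to $\mu$ in the same parametrization also crosses itself there. Hence an embedded $\beta$ must abandon $\mu$ somewhere, and a merely $C^0$-close bridge carries no information about $\lambda$.

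Third, your claim that the winding numbers of $\beta\cup\mu^{-1}$ are bounded is unjustified. For example, $\gamma^1_n$ may end with $N_n\to\infty$ tiny coils of radius $r_n$ with $N_nr_n^2\to0$, which is compatible with $f^1_n\to f^1$, and $\gamma^2_n$ may cross these coils on its way out. So the bound $C\delta^2$ does not follow, and the ``controlled area'' in your polygonal fallback is not controlled by geometry at all.

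\textbf{Missing idea.} Shortcut $\mu$ rather than push it off. For large $n$, the outer pieces $\gamma^1_n([0,s_1])$ and $\gamma^2_n([s_2,b_2])$ are disjoint from each other, from $\sigma$, and from the other arc's middle piece. So all intersections occur among $\gamma^1_n([s_1,b_1])$, $\sigma$ and $\gamma^2_n([0,s_2])$. Cut $\sigma$ at its last point on $\gamma^1_n$. Then follow $\gamma^1_n$ and the remaining part of $\sigma$ only up to their first point on $\gamma^2_n([0,s_2])$, and continue along $\gamma^2_n$ from there.

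The resulting arc is embedded. Its middle part consists of a subarc of $\gamma^1_n|_{[s_1,b_1]}$, a piece of $\sigma$, and a subarc of $\gamma^2_n|_{[0,s_2]}$. Hence every partial action there is at most
\[
\operatorname{osc}_{[s_1,b_1]}f^1_n+\operatorname{osc}_{[0,s_2]}f^2_n+M\,\mathrm{length}(\sigma),
\]
where $M$ bounds $|\xi|$ near $p$ and $\mathrm{length}(\sigma)\le 2\max_i\|\gamma^i_n-\gamma^i\|_\infty$. This is small, provided $s_1,s_2$ are chosen so that $f^1,f^2$ oscillate little on those intervals, because $f^i_n\to f^i$ uniformly. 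No area or winding estimate is needed. To handle the corners, do the surgery after passing to polygonal approximants in general position, and then apply \cref{lem:polygonal-rounding}.

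The paper uses the same principle in a different form. It cuts the approximants where they enter and leave a small ball $B(P,1/n)$ and joins the retained ends by a polygonal connector of length $O(1/n)$ inside the ball. That connector's action is $O(1/n)$ because $\lambda$ is bounded there, and the discarded pieces are controlled by the uniform convergence of the primitives.
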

\begin{proof}
    Put $P=\gamma^1(b_1)=\gamma^2(0)$.  
    Let $(\gamma_n^i,f_n^i)_n$ be approximating sequences for $(\gamma^i,f^i)$.

    Let $n \ge 1$ and consider the open disk $B(P,1/n)$ centered at $P$ with radius $1/n$.
    There exists a positive number $\varepsilon_n>0$ such that $\Image(\gamma^1) \cap \partial B(P;1/n)$ and $\Image(\gamma^2) \cap \partial B(P;1/n)$ has distance $3\varepsilon_n$.
    Take a sufficiently large $m(n)$ such that $\gamma^i_{m(n)}$ is $\varepsilon_n$-close to $\gamma^i$ in the $C^0$-sense for $i=1,2$. 
    If necessary, we can apply $C^1$-small perturbation supported on $B(P;1/n+\varepsilon_n)$ and assume that $\gamma^i_{m(n)}$ is transverse to $\partial B(P;1/n)$. 
    Track $\gamma^1_{m(n)}$ toward its endpoint and cut it at its first entrance into $B(P,1/n)$ after the separated part.
    Similarly track $\gamma^2_{m(n)}$ from its initial point and retain it from its last exit from $B(P,1/n)$ onward.
    The retained pieces meet $\overline{B(P,1/n)}$ only at their new endpoints.  
    Extend these pieces by sufficiently small disjoint segments in $\overline{B(P,1/n)}$ and join the new endpoints by another segment to get a piecewise $C^1$-arc that is an embedded polygonal arc in $\overline{B(P,1/n)}$. 
    By applying \cref{lem:polygonal-rounding} to the polygonal part and reparametrizing the resulting arc to get a $C^1$-embedding $\gamma_n \colon [0,b_1+b_2] \to \bR^2$.
    
    Then the resulting sequence $(\gamma_n)_n$ converges uniformly to $\gamma^2*\gamma^1$.
    Since $f_n^i\to f^i$ uniformly and the $f^i$ are continuous, the primitive variations on the discarded pieces tend to zero.
    The connecting subarc in $\gamma_n$ can be chosen with length $O(1/n)$; since $\lambda$ is bounded near $P$, its integral of $\lambda$ is $O(1/n)$.  
    Consequently the primitives of the glued arcs converge uniformly to the function $f$ in the statement.
\end{proof}

\begin{proposition}\label{prop:closing}
    Let $c\colon S^1\to\bR^2$ be a Jordan curve.  
    Suppose there are finite points
    \[
      \theta_0<\theta_1<\cdots<\theta_N=\theta_0+2\pi
    \]
    such that every arc
    \[
      c\circ e\big|_{[\theta_{i-1},\theta_i]}
    \]
    is Liouville regular.
    Then $c$ is Liouville regular.
\end{proposition}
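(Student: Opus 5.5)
We reduce the closing statement to \cref{prop:concat} and then handle the one remaining junction at $c(\theta_0)$ by the same cut-and-reconnect construction, now in the cyclic setting. First, applying \cref{prop:concat} inductively to the arcs $c\circ e|_{[\theta_{i-1},\theta_i]}$ for $i=1,\dots,N-1$, and using \cref{lem:reparam} to move parameter intervals so that they start at $0$, we find that the arc $\gamma \coloneqq c\circ e|_{[\theta_0,\theta_{N-1}]}$ is Liouville regular. Injectivity of each partial concatenation follows from injectivity of $c$ on $[\theta_0,\theta_0+2\pi)$. The last arc $\beta\coloneqq c\circ e|_{[\theta_{N-1},\theta_N]}$ is Liouville regular by hypothesis. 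So $c$ is a cyclic concatenation of two Liouville regular arcs $\gamma$ and $\beta$, meeting at two points $P=c(\theta_{N-1})$ and $Q=c(\theta_0)$. Their images intersect only in $\{P,Q\}$.

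Next we fix approximating sequences $(\gamma_n,g_n)$ and $(\beta_n,h_n)$ for $\gamma$ and $\beta$. We perform the construction in the proof of \cref{prop:concat} simultaneously in two small disjoint disks $B(P,1/n)$ and $B(Q,1/n)$. Outside the two disks, the images of $\gamma$ and $\beta$ are disjoint compact sets, so they have a positive distance $3\varepsilon_n$. We choose $m(n)$ so that both $\gamma_{m(n)}$ and $\beta_{m(n)}$ are $\varepsilon_n$-close to $\gamma$ and $\beta$, and make them transverse to the two circles by a small perturbation. The truncation rule is then as follows:
\begin{itemize}
    \item keep $\gamma_{m(n)}$ from its last exit from $B(Q,1/n)$ to its first entrance into $B(P,1/n)$;
    \item keep $\beta_{m(n)}$ from its last exit from $B(P,1/n)$ to its first entrance into $B(Q,1/n)$.
\end{itemize}
The two retained pieces are disjoint embedded $C^1$ arcs, each meeting the closed disks only at its endpoints. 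Inside each disk we join the two endpoints by a short embedded polygonal path, for instance radial segments plus a chord, avoiding the retained pieces. This yields a piecewise $C^1$ Jordan curve. We round its corners using \cref{lem:polygonal-rounding} in its local form (the argument there applies to finitely many corners of any piecewise $C^1$ embedded curve), and we reparametrize by a homeomorphism of $S^1$ close to the identity so that the result is a $C^1$-embedding $c_n\colon S^1\to\bR^2$. Then $c_n\to c$ uniformly.

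It remains to check convergence of the primitives $f_n(t)=\int_0^t(c_n\circ e)^*\lambda$ on compact subsets of $\bR$. On $[\theta_0,\theta_0+2\pi]$ the argument of \cref{prop:concat} applies verbatim. The discarded pieces contribute variations of $g_n$ and $h_n$ that tend to zero, by uniform convergence to continuous limits. The connecting segments have length $O(1/n)$ in regions where $\lambda$ is bounded. Hence $f_n$ converges uniformly there to the concatenated primitive $f$ built from the primitives of $\gamma$ and $\beta$.

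\textbf{Main obstacle.} Extending beyond one period is the step needing care, since $f_n(t+2\pi)=f_n(t)+\int_{c_n}\lambda$. We must show that the total integrals $A_n\coloneqq\int_{c_n}\lambda$ converge. By the one-period convergence, $A_n=f_n(\theta_0+2\pi)-f_n(\theta_0)$ converges to $A\coloneqq f(\theta_0+2\pi)-f(\theta_0)$. Hence on each interval $[\theta_0+2\pi k,\theta_0+2\pi(k+1)]$ we have $f_n\to f+kA$ uniformly. The resulting limit, extended in this quasi-periodic way, is continuous on $\bR$, which gives local uniform convergence on all of $\bR$. Finally, we check the base point: if $\theta_0\neq 0$, we subtract the values at $0$, which also converge.
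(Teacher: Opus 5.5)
Your proposal is correct and follows the paper's proof: reduce to $N=2$ by \cref{prop:concat}, then perform the local surgery from that proof simultaneously at the two junction points $P$ and $Q$. Your extra bookkeeping (the quasi-periodicity $f_n(t+2\pi)=f_n(t)+\int_{c_n}\lambda$ and the shift of base point to $0$) only makes explicit what the paper leaves implicit; the one detail to tighten is that $\varepsilon_n$ should be chosen against the parts of $\gamma,\beta$ lying outside slightly smaller disks (say of radius $1/(2n)$), because points of the retained pieces only shadow points of $\gamma,\beta$ outside $B(\cdot,1/n-\varepsilon_n)$, and this is what makes the retained pieces provably disjoint.
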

\begin{proof}
    By \cref{prop:concat}, we can reduce the problem to the case $N=2$.
    Then it is enough to perform the local surgery from the proof of \cref{prop:concat} simultaneously for the two connecting points.
\end{proof}

\begin{corollary}\label{cor:closing-peg}
    Let $c\colon S^1\to\bR^2$ be a Jordan curve which is a finite union of Liouville regular Jordan arcs.
    Then $c$ inscribes a rectangle in every prescribed similarity class.
\end{corollary}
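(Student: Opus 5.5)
The plan is to chain \cref{prop:closing} with \cref{thm:peg}. Write $c$ as the cyclic concatenation of finitely many Liouville regular Jordan arcs $\gamma^1,\dots,\gamma^N$, whose consecutive endpoints coincide. Since the images cover $c(S^1)$ and $c$ is injective, each $\gamma^i$ traces out a closed subarc of the circle. After reparametrizing the arcs, there are parameters $\theta_0<\theta_1<\cdots<\theta_N=\theta_0+2\pi$ such that each restriction $c\circ e|_{[\theta_{i-1},\theta_i]}$ is a reparametrization of $\gamma^i$.

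The first step is to make this decomposition precise. The arcs may be listed in arbitrary order and may overlap in more than endpoints. If they overlap, we first shrink them using \cref{lem:restriction}. Concretely, we choose finitely many points of $c(S^1)$ that cut the circle into consecutive closed subarcs, each contained in the image of a single $\gamma^i$. Each such subarc is the image of a subinterval under $\gamma^i$, because $\gamma^i$ is a homeomorphism onto its image. It is therefore Liouville regular by \cref{lem:restriction}.

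The second step transfers regularity to the circle's own parametrization. The map $(\gamma^i)^{-1}\circ c\circ e$ is a homeomorphism between the relevant compact intervals, so \cref{lem:reparam} shows that each $c\circ e|_{[\theta_{i-1},\theta_i]}$ is Liouville regular. \Cref{prop:closing} then gives that $c$ is Liouville regular, and \cref{thm:peg} gives the inscribed rectangle similar to any prescribed $R$.

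The only step needing care is the combinatorial reduction in the first step: showing that a finite union of arcs covering a Jordan curve can be refined into consecutive subarcs, each lying inside one of the given arcs. I would handle it by pulling everything back to $S^1$. Each $c^{-1}(\Image\gamma^i)$ is a closed arc of $S^1$, a finite union of these covers $S^1$, and the endpoints of these arcs give a finite cyclic partition with the required property. All the analytic content is already in the cited results.
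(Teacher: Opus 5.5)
Your proposal is correct and follows the paper's proof, which is the one-line combination of \cref{prop:closing} with \cref{thm:peg}. You also spell out the step the paper leaves implicit: refining a possibly overlapping finite cover into consecutive subarcs via \cref{lem:restriction} and \cref{lem:reparam}, so that the hypotheses of \cref{prop:closing} are met for the parametrization $c\circ e$ itself.
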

\begin{proof}
    Combine \cref{prop:closing} with \cref{thm:peg}.
\end{proof}

\section{Arcs of finite variation}\label{sec:variation}

In this section, we explore the relation between arcs with finite variation and Liouville regularity.

\subsection{Variation and Young integration}

We briefly recall the $p$-variation and the Young integral.
See \cite{FrizVictoir} for details.

Let $(X,d)$ be a metric space and $p\geq1$.  For $u\colon[a,b]\to X$, define
\[
  \|u\|_{\pvar p}
  \coloneqq 
  \sup_{\Delta}
  \left(
    \sum_{i=0}^{N-1}d(u(t_i),u(t_{i+1}))^p
  \right)^{1/p},
\]
where $\Delta=\{a=t_0<\cdots<t_N=b\}$ ranges over finite partitions. 
We denote by $V^p([a,b],X)$ the space of maps of finite $p$-variation, that is, 
\[
    V^p([a,b],X) \coloneqq \{u \colon [a,b] \to X \mid \|u\|_{\pvar p} <+\infty \}.
\]
The case $p=1$ is bounded variation.

\begin{lemma}[{\cite[Prop.~5.5]{FrizVictoir}}]\label{lem:pvar-interpolation}
    Let $(X,d)$ be a metric space and $u \colon [a,b] \to X$ be continuous and of finite $p$-variation.
    For $1 \le p < p' <\infty$, one has 
    \begin{equation}
        \|u\|_{p'\text{-}\mathrm{var}}^{p'} \le \|u\|_\infty^{p'-p} \|u\|_{p\text{-}\mathrm{var}}^{p}.
    \end{equation}
\end{lemma}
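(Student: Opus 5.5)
The inequality reduces to a pointwise bound on each increment, followed by summation over an arbitrary partition and a supremum. Fix a partition $\Delta=\{a=t_0<\cdots<t_N=b\}$ and write $d_i\coloneqq d(u(t_i),u(t_{i+1}))$. Since $p'-p>0$, we can split
\[
  \sum_i d_i^{p'}=\sum_i d_i^{p'-p}\,d_i^{p}\le \Bigl(\max_i d_i\Bigr)^{p'-p}\sum_i d_i^{p}.
\]
The second factor is bounded by $\|u\|_{\pvar p}^p$ by the definition of $p$-variation. Taking the supremum over all partitions $\Delta$ then gives the claim, once $\max_i d_i$ is controlled by $\|u\|_\infty$.

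That control is the only subtle point, because $X$ is merely a metric space and $\|u\|_\infty$ has to be read correctly. I would interpret $\|u\|_\infty$ as the oscillation $\sup_{s,t}d(u(s),u(t))$; in $\bR^2$ one can equivalently use $2\sup|u|$ up to a harmless constant. With this reading every increment satisfies $d_i\le\|u\|_\infty$ trivially, and the estimate closes.

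If instead one insists on the sup norm of a vector-valued $u$, the bound becomes $d_i\le 2\|u\|_\infty$, and the inequality holds with an extra factor $2^{p'-p}$. This constant is irrelevant for every application below, since only finiteness of the $p'$-variation and smallness as $\|u\|_\infty\to0$ are used. Continuity of $u$ is not needed for the estimate; it only guarantees that the oscillation is finite.
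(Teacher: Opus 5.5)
Your argument is correct and is the standard one-line proof behind the cited result of Friz--Victoir (the paper gives no proof of its own): bound $\sum_i d_i^{p'}\le(\max_i d_i)^{p'-p}\sum_i d_i^{p}$, then control $\max_i d_i$ by the oscillation of $u$. Your caveat about $\|u\|_\infty$ is also right: read as a sup norm of a vector-valued $u$, the inequality can fail (for $u(t)=2t-1$ on $[0,1]$ the left side is $2^{p'}$ and the right side is $2^{p}$), so one needs either the oscillation or the extra factor $2^{p'-p}$, which is harmless in Lemma~\ref{lem:polygonal-var}.
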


Let $u,v\colon[a,b]\to\bR$ be continuous functions. 
If $v$ is bounded variation, one can define the Riemann--Stieltjes integral
\begin{equation}\label{eq:integral}
    \int_a^b u \, dv 
    \coloneqq 
    \lim_{\mesh{\Delta}} \sum_{i=0}^{N-1} u(t_i) (v(t_{i+1})-v(t_i)),
\end{equation}
where $\Delta = \{a=t_0<t_1<\dots<t_N=b \}$ and $\mesh{\Delta}=\max_{i} (t_{i+1}-t_i)$.
If $u\in V^p([a,b],\bR)$, $v\in V^q([a,b],\bR)$, and $p^{-1}+q^{-1}>1$,  one can define the Young integral \cite{Young} by the above integral \eqref{eq:integral}. 
Young--Loeve estimate asserts that the Young integral is continuous with respect to the corresponding variation norms, that is, 
\[
    \left| \int_a^b u \, dv -u(a) (v(b)-v(a)) \right| \le 
    \frac{1}{1-2^{1-\theta}} \|u\|_{\pvar p} \|v\|_{\pvar q}
\]
with $\theta = p^{-1}+q^{-1}>0$. 

\subsection{Embedded polygonal interpolation}

We write $(x,\xi)$ for the coordinate $\bR^2$.
Hence $x \colon \bR^2 \to \bR$ denotes the first projection and $\xi \colon \bR^2 \to \bR$ denotes the second projection. 
For an arc $\gamma\colon[a,b]\to\bR^2$, write
\[
    x_\gamma \coloneqq x\circ\gamma,
    \quad
    \xi_\gamma \coloneqq \xi\circ\gamma.
\]

For a continuous real-valued function $u$ and a partition
\[
    \Delta=\{a=t_0<\cdots<t_N=b\},
\]
write $u^\Delta$ for the piecewise affine interpolation through the points $u(t_i)$.  For an arc $\gamma$, set
\[
    \gamma^\Delta \coloneqq (x_\gamma^\Delta,\xi_\gamma^\Delta).
\]

The following is the Euclidean case of the approximation theorem of Boedihardjo--Geng~\cite{BoedihardjoGeng}.

\begin{lemma}[{Simple polygonal interpolation \cite[Thm.~2.1, Thm~2.2, and Rem.~2.3]{BoedihardjoGeng}}]\label{lem:bg-interpolation}
    Let $\gamma\colon[a,b]\to\bR^2$ be a Jordan arc, and let $a<a_1<\cdots<a_k<b$ be finitely many prescribed parameter values.  For every $\varepsilon>0$ there exists a finite partition $\Delta$ of $[a,b]$ such that
    \begin{enumerate}[(1)]
    \item $a_1,\ldots,a_k\in\Delta$;
    \item $\mesh\Delta<\varepsilon$;
    \item $\gamma^\Delta$ is an embedded polygonal arc.
    \end{enumerate}
    For a Jordan curve there are arbitrarily fine partitions for which the periodic polygonal interpolation is a Jordan polygon.
\end{lemma}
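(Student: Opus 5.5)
Since the statement is quoted from \cite{BoedihardjoGeng}, we only sketch how we would prove it directly. Two quantities control the construction. The first is the modulus of continuity of $\gamma$. The second is an \emph{injectivity modulus}: for $\delta>0$ put
\[
  \eta(\delta)\coloneqq\min\{|\gamma(s)-\gamma(t)| : s,t\in[a,b],\ |s-t|\geq\delta\},
\]
which is positive by compactness and injectivity. The argument then separates intersections of chords into \emph{far} pairs, handled by $\eta$, and \emph{near} pairs, handled by a careful choice of the partition points.

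\textbf{Far chords.} Fix $\delta<\varepsilon$ and choose any partition containing $a_1,\dots,a_k$. Require that every piece $[t_i,t_{i+1}]$ has length $<\delta$ and that $\gamma([t_i,t_{i+1}])$ has diameter $<\eta(\delta)/3$. Then the chord $[\gamma(t_i),\gamma(t_{i+1})]$ lies in the ball of radius $\eta(\delta)/3$ around $\gamma(t_i)$. Suppose two pieces are separated in parameter by at least $\delta$, so that their left endpoints satisfy $|\gamma(t_i)-\gamma(t_j)|\geq\eta(\delta)$. Then the two balls, and hence the two chords, are disjoint. It follows that only chords whose parameter intervals lie within $O(1)$ pieces of each other can meet. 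The $O(1)$ bound is uniform once we additionally require that each piece have length comparable to $\delta$; we first try to ensure this by choosing $\delta$ small relative to the gaps between the $a_j$.

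\textbf{Near chords: last-exit choice.} To rule out crossings among nearby chords, we choose the partition points greedily rather than uniformly. Starting from $t_0=a$ (or from the last prescribed point), let
\[
  t_{i+1}=\sup\{t : |\gamma(t)-\gamma(t_i)|\leq r\}\wedge(\text{next prescribed }a_j),
\]
with $r$ small relative to $\eta(\delta)$. By construction, after time $t_{i+1}$ the arc never returns to the closed ball $\overline B(\gamma(t_i),r)$. This forces all later vertices to lie outside that ball, while the chord from $\gamma(t_i)$ has length at most $r$. We would then show by a planar case analysis that two chords among a bounded number of consecutive ones can only cross if some later vertex re-enters an earlier ball, which is excluded. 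The prescribed points $a_j$ interrupt the greedy step, so we need a separate short argument there, for instance by shrinking $r$ near each $a_j$. The mesh bound follows from uniform continuity once $r$ is small, because $\gamma^{-1}$ is uniformly continuous on the image.

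\textbf{Main obstacle and circle case.} We expect the near-chord case to be the main obstacle. Last-exit points control vertices but not chords, and a chord of length $\leq r$ can still cross an adjacent chord when the curve zigzags at scale $r$. If the pure case analysis fails, the fallback is an iterative refinement. Whenever two near chords cross, insert the intermediate last-exit point at a smaller radius. The far-chord estimate shows that refinement never creates new far crossings. We would then argue that the process terminates, for example by a decreasing count of crossings using general position of the finitely many vertices, perturbed slightly along the arc. For a Jordan curve we run the same construction on $S^1$ with a cyclic injectivity modulus. The only additional point is that the closing chord, together with the wrap-around pieces, is handled by the same far/near dichotomy.
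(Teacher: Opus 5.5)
\textbf{Core step: the construction works, but you have not proved it.} Your greedy last-exit construction already works for arcs. However, the proposal never shows that it yields an embedded polygon. The decisive step is left to an unspecified ``planar case analysis'' that you yourself doubt. The fallback, inserting points where near chords cross, has no termination argument: removing one crossing can create others, and nothing forces a crossing count to decrease.

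The doubt is unfounded, provided the supremum is taken over all of $[t_i,b]$ with no truncation. Write $p_i=\gamma(t_i)$. Then $|p_{i+1}-p_i|\le r$, with equality unless $t_{i+1}=b$, and $|\gamma(t)-p_i|>r$ for all $t>t_{i+1}$. For $i\ge k+2$ this gives $|p_{i+1}-p_k|>r$ and $|p_i-p_{k+1}|\ge r$. So a common point of $[p_k,p_{k+1}]$ and $[p_i,p_{i+1}]$ would force
\[
  r\ge|p_i-p_{i+1}|\ge|p_i-p_{k+1}|+|p_{i+1}-p_k|-|p_k-p_{k+1}|>r .
\]
Adjacent chords cannot fold back, since $|p_{k+2}-p_k|>r$. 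Termination follows from uniform continuity of $\gamma$, and the mesh bound from uniform continuity of $\gamma^{-1}$.

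Because this exclusion is global in time, you do not need the far/near split, the modulus $\eta$, or the ``$O(1)$ pieces'' count. That count is also unjustified: since $\eta(\delta)\le\omega_\gamma(\delta)$, requiring oscillation $<\eta(\delta)/3$ on each piece generally forces pieces shorter than $\delta$ by an uncontrolled factor. With the triangle-inequality argument, the arc case without prescribed points is settled. That is the only form the paper actually uses (\cref{prop:pq-variation,prop:bv}); the paper itself gives no proof and cites \cite{BoedihardjoGeng}.

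\textbf{The genuine gaps: prescribed points and the circle case.} Truncating at a prescribed $a_j$ removes exactly the global exclusion for the chord $[p_k,\gamma(a_j)]$. Shrinking $r$ cannot restore it, because that would require $\gamma(a_j)$ to be strictly nearer to $p_k$ than every point of $\gamma((a_j,b])$. Suppose $\gamma((a_j,b])$ spirals out of $\gamma(a_j)$, with $\gamma([a,a_j))$ an interleaved spiral. Then $\gamma((a_j,b])$ crosses the ray from $\gamma(a_j)$ towards any $p\neq\gamma(a_j)$ at arbitrarily small distances. Hence it meets every disk $B(p,|p-\gamma(a_j)|)$, and no choice of earlier vertices or radii works. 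Including prescribed points therefore needs a genuinely new idea at the junction; this is precisely what the paper imports from \cite{BoedihardjoGeng}.

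Likewise, the circle case is not ``the same construction''. The last exit from the base point $p_0$ over a full period is the end of the period, because the curve returns to $p_0$. Restricting that supremum recreates a junction at $p_0$ of the same kind, although here at least the base point may be chosen freely.
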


\begin{lemma}\label{lem:polygonal-var}
Let $p \ge 1$ and $u\colon[a,b]\to\bR$ be continuous and of finite $p$-variation.  Then for every finite partition $\Delta$,
\[
  \|u^\Delta\|_{\pvar p}
  \leq
  \|u\|_{\pvar p}.
\]
If $\mesh\Delta\to0$, then
\[
  \|u^\Delta-u\|_\infty\to0,
\]
and, for every $p'>p$,
\[
  \|u^\Delta-u\|_{\pvar{p'}}\to0.
\]
\end{lemma}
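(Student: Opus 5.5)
We prove the three assertions of \cref{lem:polygonal-var} in order. The first is a discrete contraction property of piecewise affine interpolation. The second is plain uniform continuity. The third then follows from \cref{lem:pvar-interpolation} applied to the difference $u^\Delta-u$.

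\emph{Step 1: $\|u^\Delta\|_{\pvar p}\le\|u\|_{\pvar p}$.} Fix a partition $\Pi=\{s_0<\cdots<s_M\}$ of $[a,b]$; we bound $\sum_j|u^\Delta(s_{j+1})-u^\Delta(s_j)|^p$.
\begin{itemize}
\item First refine $\Pi$ so that it contains every node $t_i$ of $\Delta$ lying strictly between its first and last points. This is allowed because refinement does not decrease the sum: by convexity of $y\mapsto|y|^p$ for $p\ge1$ we have $|y_1+y_2|^p\le(|y_1|+|y_2|)^p$, and the latter is $\ge|y_1|^p+|y_2|^p$.
\item After refining, the points of $\Pi$ falling inside a single cell $[t_i,t_{i+1}]$ cut it into subintervals of lengths $\ell_k$ with $\sum_k\ell_k\le h:=t_{i+1}-t_i$.
\item On that cell $u^\Delta$ is affine with increment $D=u(t_{i+1})-u(t_i)$, so the contribution of the cell is
\[
\sum_k(\ell_k/h)^p|D|^p\le|D|^p .
\]
\item Summing over cells bounds the total by $\sum_i|u(t_{i+1})-u(t_i)|^p\le\|u\|_{\pvar p}^p$.
\end{itemize}
Taking the supremum over $\Pi$ gives the claim.

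\emph{Step 2: uniform convergence.} Let $\omega$ be the modulus of continuity of $u$ on $[a,b]$. For $t\in[t_i,t_{i+1}]$, the value $u^\Delta(t)$ is a convex combination of $u(t_i)$ and $u(t_{i+1})$. Hence
\[
|u^\Delta(t)-u(t)|\le\max\bigl(|u(t_i)-u(t)|,|u(t_{i+1})-u(t)|\bigr)\le\omega(\mesh\Delta),
\]
which tends to $0$ as $\mesh\Delta\to0$.

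\emph{Step 3: $p'$-variation convergence.} Put $w=u^\Delta-u$. It is continuous, and by the triangle inequality for the $p$-variation seminorm together with Step 1,
\[
\|w\|_{\pvar p}\le\|u^\Delta\|_{\pvar p}+\|u\|_{\pvar p}\le2\|u\|_{\pvar p}.
\]
Applying \cref{lem:pvar-interpolation} with $X=\bR$ gives
\[
\|w\|_{\pvar{p'}}^{p'}\le\|w\|_\infty^{p'-p}\bigl(2\|u\|_{\pvar p}\bigr)^p .
\]
By Step 2, $\|w\|_\infty\to0$, and $p'-p>0$, so the right-hand side tends to $0$. This is the third assertion.

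The only genuinely non-formal step is Step 1, in particular the justification for refining $\Pi$ to contain the nodes of $\Delta$. It rests on the superadditivity of $y\mapsto|y|^p$ on same-sign increments, combined with the triangle inequality for increments of opposite sign. Once Step 1 is in place, Steps 2 and 3 are formal.
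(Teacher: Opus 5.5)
Your Steps 2 and 3 are exactly the paper's argument, but Step 1 has a genuine gap when $p>1$. Inserting a point into $\Pi$ replaces a term $|y_1+y_2|^p$ by $|y_1|^p+|y_2|^p$. The two inequalities you cite, $|y_1+y_2|^p\le(|y_1|+|y_2|)^p$ and $(|y_1|+|y_2|)^p\ge|y_1|^p+|y_2|^p$, both bound a quantity above by $(|y_1|+|y_2|)^p$. So they give no comparison between the old term and the new one. In fact, superadditivity of $y\mapsto|y|^p$ on same-sign increments says that refinement \emph{decreases} the sum there, which is why $p$-variation sums are not monotone under refinement. Your route would prove $\|u^\Delta\|_{\pvar p}^p\le\sum_i|u(t_{i+1})-u(t_i)|^p$, and this is false. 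Take $u(t)=t$ on $[0,1]$ and $\Delta=\{0,\tfrac12,1\}$. Then $u^\Delta=u$ and $\|u^\Delta\|_{\pvar p}^p=1$, while $\sum_i|u(t_{i+1})-u(t_i)|^p=2^{1-p}<1$. Your argument is valid for $p=1$, but the lemma is needed with $p>1$ in \cref{prop:pq-variation}.

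The missing idea is that you should not add all nodes of $\Delta$ to $\Pi$. Instead, push the points of $\Pi$ onto a \emph{subset} of the nodes without decreasing the sum; this is the paper's argument. Suppose $s_j\in(t_i,t_{i+1})$, and let $s_j$ range over $[\max(s_{j-1},t_i),\min(s_{j+1},t_{i+1})]$. On this range $u^\Delta(s_j)$ is affine in $s_j$, and $z\mapsto|z-A|^p+|B-z|^p$ is convex. Hence the sum attains its maximum at an endpoint of the range: $s_j$ can be moved to a node, or merged with a neighbour and dropped. After finitely many such moves, $\Pi$ consists of nodes $t_{i_0}<\cdots<t_{i_m}$. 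The sum is then $\sum_k|u(t_{i_{k+1}})-u(t_{i_k})|^p$, a variation sum of $u$ itself, and hence at most $\|u\|_{\pvar p}^p$. Equivalently, by the intermediate value theorem, each value $u^\Delta(s_j)$ equals $u(r_j)$ for a nondecreasing choice of the $r_j$. With Step 1 repaired this way, your Steps 2 and 3 go through unchanged.
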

\begin{proof}
For a real-valued piecewise affine function, the supremum defining its $p$-variation can be taken over partitions consisting of vertices.  Indeed, if a partition point lies in the interior of an affine edge, its value lies between the endpoint values; convexity of
\[
 z\mapsto |A-z|^p+|z-B|^p
\]
shows that moving it to one of the two endpoints cannot decrease the relevant variation sum.  Since the vertex values of $u^\Delta$ are values of $u$, the first inequality follows.

Denoting the modulus of continuity of $u$ by $\omega_u$, we have
\[
  \|u^\Delta-u\|_\infty\leq\omega_u(\mesh\Delta),
\]
which implies that $u^\Delta\to u$ uniformly. 
Moreover,
\[
  \|u^\Delta-u\|_{\pvar p}\leq2\|u\|_{\pvar p}.
\]
Apply \cref{lem:pvar-interpolation} to $u^\Delta-u$ to obtain, for $p'>p$,
\begin{align*}
    \|u^\Delta-u\|_{p'\text{-}\mathrm{var}}
    & \le 
    \|u^\Delta-u\|_\infty^{1-p/p'} \|u^\Delta-u\|_{p\text{-}\mathrm{var}}^{p/p'} \\
    & \le 
    (2\|u\|_{p\text{-}\mathrm{var}})^{p/p'} \|u^\Delta-u\|_\infty^{1-p/p'},
\end{align*}
which tends to zero. 
\end{proof}

The polygonal interpolations in \cref{lem:bg-interpolation} are not $C^1$ at their vertices.  
This causes no problem: after proving convergence of their primitives, we apply \cref{lem:polygonal-rounding} with an error tending to zero.  
Equivalently, one may regard this as an application of the diagonal lemma (\cref{lem:diagonal}).

\subsection{Complementary variation exponents}

\begin{proposition}\label{prop:pq-variation}
Let $\gamma\colon[a,b]\to\bR^2$ be a Jordan arc.  Assume
\[
  \xi_\gamma\in V^p([a,b],\bR),
  \quad
  x_\gamma\in V^q([a,b],\bR),
  \quad
  \frac1p+\frac1q>1.
\]
Then $\gamma$ is Liouville regular and a Liouville primitive for $\gamma$ is given by the Young integral
\[
  f(t)=\int_a^t\xi_\gamma\,dx_\gamma.
\]
\end{proposition}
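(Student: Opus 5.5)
The plan is to approximate $\gamma$ by the embedded polygonal interpolations of \cref{lem:bg-interpolation}. We show their primitives converge uniformly to the Young integral $f$, and then pass to $C^1$-embeddings using \cref{lem:polygonal-rounding} together with the diagonal lemma \cref{lem:diagonal}.

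First, since $\frac1p+\frac1q>1$, we may choose $p'>p$ and $q'>q$ with $\frac1{p'}+\frac1{q'}>1$ still. By \cref{lem:bg-interpolation}, pick partitions $\Delta_n$ of $[a,b]$ with $\mesh{\Delta_n}<1/n$ such that $\gamma^{\Delta_n}$ is an embedded polygonal arc. Then $\gamma^{\Delta_n}\to\gamma$ uniformly by \cref{lem:polygonal-var}. On each edge the pullback of $\lambda=\xi\,dx$ along the linear parametrization is an ordinary Riemann integral, which coincides with the Riemann--Stieltjes (hence Young) integral. So the primitive of $\gamma^{\Delta_n}$ is
\[
  f_n(t)=\int_a^t\xi_\gamma^{\Delta_n}\,dx_\gamma^{\Delta_n}.
\]
Also $f$ is continuous: the Young--Loeve estimate applied on $[s,t]$ bounds $|f(t)-f(s)|$ by $\|\xi_\gamma\|_\infty|x_\gamma(t)-x_\gamma(s)|+C\|\xi_\gamma\|_{\pvar p,[s,t]}\|x_\gamma\|_{\pvar q,[s,t]}$. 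Continuity of $u$ together with finite $p$-variation makes the $p$-variation over short intervals small (a standard fact via the superadditive control $\omega(s,t)=\|u\|^p_{\pvar p,[s,t]}$, which is continuous).

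The key step is the uniform convergence $f_n\to f$. Write $u_n=\xi_\gamma^{\Delta_n}$, $v_n=x_\gamma^{\Delta_n}$, $u=\xi_\gamma$ and $v=x_\gamma$, and use bilinearity:
\[
  f_n(t)-f(t)=\int_a^t(u_n-u)\,dv_n+\int_a^t u\,d(v_n-v).
\]
We apply the Young--Loeve estimate on $[a,t]$ with exponents $(p',q')$ to each term. Variation norms over $[a,t]$ are dominated by those over $[a,b]$, and the boundary term $u(a)(v(t)-v(a))$ is bounded by sup norms. This gives
\[
  |f_n(t)-f(t)|\le \|u_n-u\|_\infty\,2\|v_n\|_\infty + C\|u_n-u\|_{\pvar{p'}}\|v_n\|_{\pvar{q'}} + 2\|u\|_\infty\|v_n-v\|_\infty + C\|u\|_{\pvar{p'}}\|v_n-v\|_{\pvar{q'}}.
\]
By \cref{lem:polygonal-var}, $\|v_n\|_{\pvar{q'}}\le\|v\|_{\pvar q}$ stays bounded (via the $q'$-vs-$q$ monotonicity), while $\|u_n-u\|_{\pvar{p'}}$, $\|v_n-v\|_{\pvar{q'}}$ and the sup norms all tend to zero. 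Hence $f_n\to f$ uniformly on $[a,b]$.

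Finally, \cref{lem:polygonal-rounding} provides $C^1$-embeddings within $1/n$ of $\gamma^{\Delta_n}$, with primitives within $1/n$ of $f_n$. Alternatively, each embedded polygon is trivially Liouville regular by that lemma, and we conclude with \cref{lem:diagonal}. The main obstacle is the convergence step. The interpolations converge only in the slightly weaker $p'$- and $q'$-variation norms, not in the original ones, so the proof depends on having room to enlarge exponents while keeping $\frac1{p'}+\frac1{q'}>1$. It also requires the uniform bound on $\|v_n\|_{\pvar{q'}}$ from \cref{lem:polygonal-var}. The embeddedness of the approximants, which is the genuinely hard geometric input, is supplied by \cref{lem:bg-interpolation}.
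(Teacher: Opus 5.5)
Your proposal is correct and follows the paper's own route. Both use the embedded polygonal interpolations from \cref{lem:bg-interpolation}, enlarge to $p'>p$, $q'>q$ with $\frac1{p'}+\frac1{q'}>1$, take convergence in $p'$- and $q'$-variation from \cref{lem:polygonal-var}, apply the Young--Loeve estimate, and round via \cref{lem:polygonal-rounding}. You add the bilinear splitting and the uniform bound $\|v_n\|_{\pvar{q'}}\le\|v\|_{\pvar q}$, which the paper compresses into ``the Young--Loeve estimate therefore implies''; your separate continuity argument for $f$ is unnecessary, since $f$ is the uniform limit of the continuous $f_n$.
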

\begin{proof}
Choose partitions $\Delta_n$ as in \cref{lem:bg-interpolation}, with $\mesh{\Delta_n}\to0$, and write
\[
  \gamma^{\Delta_n}=(x_n,\xi_n).
\]
Each $\gamma^{\Delta_n}$ is an embedded polygonal arc and converges uniformly to $\gamma$.

Choose $p'>p$ and $q'>q$ sufficiently close to $p,q$ that
\[
  \frac1{p'}+\frac1{q'}>1.
\]
By \cref{lem:polygonal-var},
\[
  \|\xi_n-\xi_\gamma\|_{\pvar{p'}}\to0,
  \quad
  \|x_n-x_\gamma\|_{\pvar{q'}}\to0.
\]
The Young--Loeve estimate therefore implies
\[
  \sup_{t\in[a,b]}
  \left|
    \int_a^t\xi_n\,dx_n-
    \int_a^t\xi_\gamma\,dx_\gamma
  \right|
  \to0.
\]
Finally round $\gamma^{\Delta_n}$ using \cref{lem:polygonal-rounding}, choosing the $C^0$ and primitive errors smaller than $1/n$.  
The rounded arcs form the required $C^1$-approximating sequence.
\end{proof}

\begin{corollary}\label{cor:pvar}
    If a Jordan arc has finite $p$-variation for some $p<2$, then it is Liouville regular.
    In particular, every rectifiable Jordan arc is Liouville regular.
\end{corollary}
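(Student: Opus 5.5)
The plan is to reduce \cref{cor:pvar} directly to \cref{prop:pq-variation} by taking both exponents equal to $p$. Let $\gamma=(x_\gamma,\xi_\gamma)\colon[a,b]\to\bR^2$ be a Jordan arc of finite $p$-variation for some $1\le p<2$, where the variation is taken in the Euclidean metric on $\bR^2$.

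First I will check that each coordinate inherits finite $p$-variation. The projections $x$ and $\xi$ are $1$-Lipschitz from $\bR^2$ to $\bR$. Hence for every partition $\Delta$ the variation sums satisfy
\[
  \sum_i |x_\gamma(t_{i+1})-x_\gamma(t_i)|^p \le \sum_i |\gamma(t_{i+1})-\gamma(t_i)|^p,
\]
and the same holds for $\xi_\gamma$. Taking suprema gives $\|x_\gamma\|_{\pvar p}\le\|\gamma\|_{\pvar p}$ and $\|\xi_\gamma\|_{\pvar p}\le\|\gamma\|_{\pvar p}$. So $\xi_\gamma\in V^p([a,b],\bR)$ and $x_\gamma\in V^p([a,b],\bR)$.

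Next I apply \cref{prop:pq-variation} with $q=p$. The complementarity condition reads $\frac1p+\frac1p=\frac2p>1$, which holds exactly because $p<2$. \Cref{prop:pq-variation} then shows that $\gamma$ is Liouville regular, with Liouville primitive given by the Young integral $\int_a^t\xi_\gamma\,dx_\gamma$.

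For the second assertion, a rectifiable arc is by definition an arc of finite $1$-variation, since its length is the supremum of $\sum_i|\gamma(t_{i+1})-\gamma(t_i)|$ over partitions. Taking $p=1<2$ in the first part gives the claim. Here the Young integral reduces to the Riemann--Stieltjes integral.

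There is no real obstacle, since everything is already contained in \cref{prop:pq-variation}. The only points that need care are that $p$-variation of the vector-valued map controls that of each coordinate, and that the strict inequality $p<2$ is precisely what yields $\frac2p>1$.
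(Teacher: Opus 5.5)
Your proposal is correct and matches the paper's proof: the paper likewise notes that both coordinate functions have finite $p$-variation and applies Proposition~\ref{prop:pq-variation} with the same exponent $p$ in both coordinates, so that $2/p>1$; the rectifiable case is $p=1$. Your argument via the $1$-Lipschitz coordinate projections simply makes explicit a step the paper leaves implicit.
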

\begin{proof}
Both coordinate functions have finite $p$-variation, so \cref{prop:pq-variation} applies with the same exponent in the two coordinates.  The rectifiable case is $p=1$.
\end{proof}

In particular, this gives another proof of~\cite[Proposition~5.8]{AI_peg}.

\begin{remark}[Relation with geometric $2$-rough paths]\label{rem:rough-path}
    If a Jordan arc $\gamma$ can be enhanced to a geometric $2$-rough path, then it is Liouville regular.
    This follows from Yang~\cite[Theorem~23]{YangArea} and simple polygonal interpolation in \cref{lem:bg-interpolation}.     
\end{remark}

\subsection{One coordinate of bounded variation}

At the bounded-variation endpoint, no variation bound is needed for the other coordinate.

\begin{proposition}\label{prop:bv}
    Let $\gamma\colon[a,b]\to\bR^2$ be a Jordan arc.  
    Assume that $x_\gamma$ has bounded variation.  
    Then $\gamma$ is Liouville regular and a Liouville primitive for $\gamma$ is given by the Riemann--Stieltjes integral
    \[
        f(t)=\int_a^t\xi_\gamma\,dx_\gamma.
    \]
\end{proposition}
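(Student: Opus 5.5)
The plan is to follow the proof of \cref{prop:pq-variation}, but replace the Young--Loeve estimate with the elementary stability of Riemann--Stieltjes integrals: uniform convergence of the integrand, combined with bounded total variation of the integrators.

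Choose partitions $\Delta_n$ as in \cref{lem:bg-interpolation}, with $\mesh{\Delta_n}\to0$, and set $\gamma^{\Delta_n}=(x_n,\xi_n)$. Each $\gamma^{\Delta_n}$ is an embedded polygonal arc, and $\gamma^{\Delta_n}\to\gamma$ uniformly. By \cref{lem:polygonal-var} with $p=1$, we have $\|x_n\|_{\pvar 1}\le\|x_\gamma\|_{\pvar 1}=:V$ uniformly in $n$, and $\xi_n\to\xi_\gamma$ uniformly. Since $\xi_\gamma$ is continuous and $x_\gamma$ has bounded variation, $f(t)=\int_a^t\xi_\gamma\,dx_\gamma$ exists as a Riemann--Stieltjes integral. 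It is continuous in $t$, since $|f(t)-f(s)|\le\|\xi_\gamma\|_\infty\,\|x_\gamma|_{[s,t]}\|_{\pvar 1}$ and the variation function of a continuous BV function is continuous.

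The key step is the uniform estimate for the primitives, which splits into two terms:
\[
  \left|\int_a^t\xi_n\,dx_n-\int_a^t\xi_\gamma\,dx_\gamma\right|
  \le
  \left|\int_a^t(\xi_n-\xi_\gamma)\,dx_n\right|
  +
  \left|\int_a^t\xi_\gamma\,d(x_n-x_\gamma)\right|.
\]
The first term is at most $\|\xi_n-\xi_\gamma\|_\infty V\to0$ uniformly in $t$. The second term is the main obstacle, because $x_n-x_\gamma\to0$ only uniformly and not in variation, so a direct variation bound is unavailable. I would handle it by exploiting that $x_n$ is the interpolation of $x_\gamma$ on $\Delta_n$.

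Write $\Delta_n=\{t_i\}$. On each interval $[t_i,t_{i+1}]$, the increments of $x_n$ and $x_\gamma$ agree. Hence
\[
  \int_{t_i}^{t_{i+1}}\xi_\gamma\,d(x_n-x_\gamma)=\int_{t_i}^{t_{i+1}}(\xi_\gamma-\xi_\gamma(t_i))\,d(x_n-x_\gamma),
\]
which is bounded by $\omega_{\xi_\gamma}(\mesh{\Delta_n})\cdot 2\|x_\gamma|_{[t_i,t_{i+1}]}\|_{\pvar1}$. Summing over $i$ gives at most $2V\omega_{\xi_\gamma}(\mesh{\Delta_n})$. For $t$ strictly inside a partition interval, the partial-interval contribution is controlled by the same bound, since $\omega_{\xi_\gamma}$ still applies there and the variation of $x_n$ on a subinterval is at most that of $x_\gamma$ on the full interval. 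Thus the second term tends to $0$ uniformly in $t$.

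Finally, round each $\gamma^{\Delta_n}$ via \cref{lem:polygonal-rounding} with errors below $1/n$. The rounded arcs form an approximating sequence for $(\gamma,f)$; equivalently, one can apply \cref{lem:diagonal}.
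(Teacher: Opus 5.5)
Your proof is correct. It shares the paper's overall structure: simple polygonal interpolation, the same two-term split, the same bound $\|\xi_n-\xi_\gamma\|_\infty\|x_\gamma\|_{\pvar 1}$ for the first term, and rounding at the end. The difference is in how you handle the second term $\int_a^t\xi_\gamma\,d(x_n-x_\gamma)$.

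The paper approximates $\xi_\gamma$ uniformly by a step function $s$ on a fixed partition. It bounds the replacement error by $2\|x_\gamma\|_{\pvar 1}\|\xi_\gamma-s\|_\infty$, uniformly in $n$. It then observes that $\int_a^t s\,d(x_n-x_\gamma)$ is a fixed finite combination of increments of $x_n-x_\gamma$, which tends to zero uniformly in $t$. This is the Helly-type stability argument for Riemann--Stieltjes integrals. It uses only that $x_n\to x_\gamma$ uniformly with $\sup_n\|x_n\|_{\pvar 1}<\infty$, so it would work for any such approximating sequence, interpolating or not.

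You instead use the interpolation property $(x_n-x_\gamma)(t_i)=0$ to subtract the constant $\xi_\gamma(t_i)$ cell by cell. This avoids the density step and gives an explicit rate in terms of $\omega_{\xi_\gamma}(\mesh{\Delta_n})$. The cost is that it is tied to approximations that interpolate $x_\gamma$ at the partition points.

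One small correction concerns the final incomplete cell $[t_j,t]$. There the increment of $x_n-x_\gamma$ does not vanish, so subtracting $\xi_\gamma(t_j)$ is not free. Since $(x_n-x_\gamma)(t_j)=0$, you get
\[
  \int_{t_j}^{t}\xi_\gamma\,d(x_n-x_\gamma)
  =\int_{t_j}^{t}\bigl(\xi_\gamma-\xi_\gamma(t_j)\bigr)\,d(x_n-x_\gamma)
  +\xi_\gamma(t_j)\bigl(x_n(t)-x_\gamma(t)\bigr).
\]
The extra boundary term is at most $\|\xi_\gamma\|_\infty\|x_n-x_\gamma\|_\infty\to0$. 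So the partial cell is not ``controlled by the same bound'' as you wrote. The correct overall estimate is
\[
  \sup_t\left|\int_a^t\xi_\gamma\,d(x_n-x_\gamma)\right|
  \le 2V\,\omega_{\xi_\gamma}(\mesh{\Delta_n})+\|\xi_\gamma\|_\infty\|x_n-x_\gamma\|_\infty,
\]
which still tends to zero. Your conclusion stands.
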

\begin{proof}
    Choose simple polygonal interpolations
    \[
      \gamma^{\Delta_n}=(x_n,\xi_n)
    \]
    with $\mesh{\Delta_n}\to0$.  
    By \cref{lem:polygonal-var}, $\|x_n\|_{\pvar 1} \le \|x_\gamma\|_{\pvar 1}$ and 
    \[
      x_n\to x_\gamma,
      \quad
      \xi_n\to\xi_\gamma
    \]
    uniformly.
    
    We claim that the Riemann--Stieltjes integrals converge uniformly.
    Since 
    \[
     \sup_t
     \left|
      \int_a^t(\xi_n-\xi_\gamma)\,dx_n
     \right|
     \leq
     \|\xi_n-\xi_\gamma\|_\infty\|x_n\|_{\pvar 1}
     \to0,
    \]
    it remains to show
    \[
     \sup_t
     \left|
      \int_a^t\xi_\gamma\,d(x_n-x_\gamma)
     \right|\to0.
    \]
    Fix $\varepsilon>0$ and approximate the continuous function $\xi_\gamma$ uniformly by a step function $s$ whose discontinuity points form a fixed finite partition.  Since
    \[
     \|x_n-x_\gamma\|_{\pvar 1}
     \leq 2\|x_\gamma\|_{\pvar 1},
    \]
    the error caused by replacing $\xi_\gamma$ with $s$ is bounded uniformly in $n$ by a constant times $\|\xi_\gamma-s\|_\infty$.  
    For the fixed step function $s$, the integral against $d(x_n-x_\gamma)$ is a finite linear combination of increments of $x_n-x_\gamma$ at the partition points, together with at most one final incomplete increment.  
    Hence, its supremum in the upper limit $t$ tends to zero by the uniform convergence $x_n\to x_\gamma$.  
    Letting $\varepsilon\to0$ proves the claim.
    
    Thus
    \[
     \int_a^t\xi_n\,dx_n
     \to
     \int_a^t\xi_\gamma\,dx_\gamma
    \]
    uniformly in $t$.  
    Apply \cref{lem:polygonal-rounding} to replace each polygonal embedding by a $C^1$-embedding with vanishing additional error.
    This completes the proof.
\end{proof}

\section{A radial-tail criterion and H\"older domains}\label{sec:holder}

We now give a sufficient condition adapted to conformal approximations.
Let $\Omega\subset\bC\simeq\bR^2_{x,\xi}$ be a bounded Jordan domain and let $\varphi\colon\bD \to \Omega$ be a conformal mapping, where $\bD$ denotes the open unit disk.
By Carath\'eodory's theorem, $\varphi$ extends to a homeomorphism between their closure $\overline{\bD}\to\overline\Omega$, which we will also denote by $\varphi$.
For $0<r<1$, define the real-analytic Jordan curve $c_r\colon S^1\to\Omega$ by
\[
    c_r(\theta) \coloneqq \varphi(re^{i\theta}),
\]
and let $c(\theta)=\varphi(e^{i\theta})$ parametrize $\partial\Omega$. 

The quantity relevant to the convergence of the primitives is the maximal length of a radial tail:
\begin{equation*}\label{eq:radial-tail}
  \tau_\varphi(r)
  :=
  \sup_{\theta\in\bR}
  \int_r^1\left|\varphi'(se^{i\theta})\right|\,ds
  \in[0,\infty].
\end{equation*}
Thus $\tau_\varphi(r)$ is the supremum, over all directions, of the lengths of the curves
$\varphi([r,1)e^{i\theta})$.
We prove the first part of \cref{thm:holder}.

\begin{theorem}[Radial-tail criterion]\label{thm:radial-tail}
    Suppose that
    \begin{equation}\label{eq:tail-condition}
      \lim_{r\nearrow1}\tau_\varphi(r)=0.
    \end{equation}
    Then $\partial\Omega$ is Liouville regular.
    More precisely, defining 
    \[
        f_r(t) \coloneqq \int_0^t(c_r\circ e)^*\lambda,
        \quad t\in\bR
    \]
    and 
    \[
        M \coloneqq \sup_{z\in\overline\Omega}|\xi(z)|,
        \quad
        E(r) \coloneqq \Area\bigl(\Omega\setminus\varphi(r\bD)\bigr),
    \]
    one has
    \begin{equation}\label{eq:primitive-tail-estimate}
      \sup_{0\leq t\leq2\pi}|f_s(t)-f_r(t)|
      \leq E(r)+2M\tau_\varphi(r)
    \end{equation}
    for $0<r<s<1$.
    In particular, there exists a continuous function $f$ on $\bR$ such that $f_r \to f$ uniformly on every compact subset of $\bR$ as $r\nearrow1$.
\end{theorem}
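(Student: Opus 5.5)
The plan is to compare $f_s(t)$ and $f_r(t)$ by applying Stokes' theorem on the curvilinear quadrilateral
\[
Q_{r,s,t}=\varphi\bigl(\{\rho e^{i\theta}: r\le\rho\le s,\ 0\le\theta\le t\}\bigr),\qquad 0\le t\le 2\pi .
\]
Its boundary consists of the arc $c_s|_{[0,t]}$, the arc $c_r|_{[0,t]}$ with reversed orientation, and the two radial segments $\varphi([r,s]e^{i0})$ and $\varphi([r,s]e^{it})$. All four pieces are real-analytic, since $\varphi$ is analytic on a neighborhood of the closed annulus $\{r\le|z|\le s\}$. The Liouville form satisfies $d\lambda=d\xi\wedge dx$, so $|\int_{Q}d\lambda|=\Area(Q)$, because $\varphi$ is an orientation-preserving diffeomorphism there. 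Stokes then gives
\[
f_s(t)-f_r(t)=\pm\Area(Q_{r,s,t})+\int_{\varphi([r,s])}\lambda-\int_{\varphi([r,s]e^{it})}\lambda .
\]
For $t=2\pi$ the two radial terms cancel, but we need not use this.

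Next we bound each term. Since $Q_{r,s,t}\subset\Omega\setminus\varphi(r\bD)$, the area term is at most $E(r)$. Each radial term is at most
\[
\sup_{\overline\Omega}|\xi|\cdot\text{length}\le M\int_r^s|\varphi'(\rho e^{i\theta})|\,d\rho\le M\tau_\varphi(r).
\]
This yields \eqref{eq:primitive-tail-estimate}. The estimate extends to all $t\in\bR$ on compact sets: for $t\in[2\pi k,2\pi(k+1)]$, we have $f_r(t)=kf_r(2\pi)+f_r(t-2\pi k)$ by periodicity, so the error is at most $(|k|+1)(E(r)+2M\tau_\varphi(r))$.

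We then need $E(r)\to0$. This holds because $\varphi(r\bD)$ increases to $\Omega$ and $\Area(\Omega)<\infty$, so monotone convergence applies. Combined with \eqref{eq:tail-condition}, the family $(f_r)$ is uniformly Cauchy on compacts as $r\nearrow1$. Each $f_r$ is continuous, so the limit $f$ is continuous.

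It remains to verify Definition~\ref{def:curve}. Take $c_n\coloneqq c_{r_n}$ with $r_n\nearrow1$. These are real-analytic embeddings, since $\varphi$ is injective with $\varphi'\neq0$. They converge uniformly to $c$ because $\varphi$ is uniformly continuous on $\overline\bD$ by Carathéodory.

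The main obstacle is the sign and orientation bookkeeping in the Stokes step. We must check that the reversed inner arc and the two radial sides assemble into the oriented boundary of $Q$. For $t=2\pi$ the region degenerates into an annulus with a cut, which is handled by the same formula, or by a limit $t\nearrow2\pi$ using continuity of both sides in $t$. Beyond this, all estimates are crude and require no further input.
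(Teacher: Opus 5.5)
Your proposal is correct and is essentially the paper's proof. Both use Green's theorem on the image of the annular sector, bound the area term by $E(r)$ and each radial side by $M\tau_\varphi(r)$, get $E(r)\to0$ by continuity from below, and use $f_r(t+2\pi)=f_r(t)+f_r(2\pi)$ to pass from one period to compact subsets of $\bR$. The orientation bookkeeping you flag is harmless because only absolute values enter the estimate; with the radial sides oriented outward one gets $f_s(t)-f_r(t)=-\Area(Q_{r,s,t})-\int_{\varphi([r,s])}\lambda+\int_{\varphi([r,s]e^{it})}\lambda$ in your notation.
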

\begin{proof}
    The curves $c_r$ are real-analytic embeddings and converge uniformly to $c$ as $r\nearrow1$.
    Fix $0<r<s<1$ and $t\in[0,2\pi]$.
    The image under $\varphi$ of
    \[
      Q_{r,s,t}
      =\{\rho e^{i\theta} \mid r\leq\rho\leq s,\ 0\leq\theta\leq t\}
    \]
    is bounded by the two circular arcs $c_r|_{[0,t]}$ and $c_s|_{[0,t]}$ and by the two radial sides
    \[
      \eta_{\theta,r,s}(u)=\varphi(ue^{i\theta}),
      \quad r\leq u\leq s,
      \quad \theta\in\{0,t\}.
    \]
    Since $d\lambda=d\xi\wedge dx$, Green's theorem gives
    \begin{equation*}
      |f_s(t)-f_r(t)|
      \leq
      \Area\bigl(\varphi(Q_{r,s,t})\bigr)
      +\left|\int_{\eta_{0,r,s}}\lambda\right|
      +\left|\int_{\eta_{t,r,s}}\lambda\right|.
    \end{equation*}
    The area term is at most $E(r)$.
    Moreover, $|\lambda(v)|\leq M|v|$ on $\overline\Omega$, and hence each radial term is bounded by
    \[
      M\int_r^s|\varphi'(ue^{i\theta})|\,du
      \leq M\tau_\varphi(r).
    \]
    This proves \eqref{eq:primitive-tail-estimate}.
    
    The sets $\varphi(r\bD)$ increase to $\Omega$ as $r\nearrow1$.
    Since $\Omega$ is bounded, continuity from below of planar Lebesgue measure gives $E(r)\to0$.
    Together with \eqref{eq:tail-condition}, the estimate shows that $(f_r)$ is uniformly Cauchy on $[0,2\pi]$.
    Finally,
    \[
      f_r(t+2\pi)=f_r(t)+f_r(2\pi),
    \]
    so convergence on one period implies locally uniform convergence on $\bR$.
    The limit is continuous.  Choosing any sequence $r_n\nearrow1$, the curves $c_{r_n}$ give the approximating sequence required in \cref{def:curve}.
\end{proof}

We next apply the criterion to H\"older domains.

\begin{definition}\label{def:horder}
    In the above setting, $\Omega$ is said to be a \emph{H\"older domain} if a conformal mapping $\varphi\colon\bD\to\Omega$ is H\"older continuous, that is, if there exist $0<\alpha\leq1$ and $L>0$ such that
    \begin{equation}\label{eq:holder-riemann-map}
        |\varphi(z)-\varphi(w)|\leq L|z-w|^\alpha,
        \quad z,w\in\bD.
    \end{equation}
\end{definition}

This definition is equivalent to the logarithmic growth condition for the quasihyperbolic metric; see, for example,~\cite[Section~4.6]{Pommerenke92}.

The following estimate is the easy direction of the classical
Hardy--Littlewood characterization of analytic Hölder functions;
see, for example, \cite[p.~411]{Goluzin}.
We include the short proof for completeness.

\begin{lemma}\label{lem:holder-derivative}
If \eqref{eq:holder-riemann-map} holds, then
\begin{equation}\label{eq:holder-derivative}
  |\varphi'(z)|
  \leq 2^{1-\alpha}L(1-|z|)^{\alpha-1},
  \quad z\in\bD.
\end{equation}
Consequently,
\begin{equation}\label{eq:holder-tail-bound}
  \tau_\varphi(r)
  \leq \frac{2^{1-\alpha}L}{\alpha}(1-r)^\alpha.
\end{equation}
\end{lemma}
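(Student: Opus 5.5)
The derivative bound will come from the Cauchy integral formula for $\varphi'$ on a small circle, using the Hölder condition to control the oscillation of $\varphi$ on that circle. The tail bound will then follow by integrating the derivative bound along radii.

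Fix $z\in\bD$ and put $\rho\coloneqq(1-|z|)/2$, so the closed disk $\overline{B(z,\rho)}$ lies in $\bD$ (its points have modulus at most $|z|+\rho<1$). Since $\int_{|w-z|=\rho}(w-z)^{-2}\,dw=0$, the Cauchy formula gives
\[
  \varphi'(z)=\frac1{2\pi i}\int_{|w-z|=\rho}\frac{\varphi(w)-\varphi(z)}{(w-z)^2}\,dw .
\]
By \eqref{eq:holder-riemann-map}, $|\varphi(w)-\varphi(z)|\le L\rho^\alpha$ on the circle. The standard length estimate then yields
\[
  |\varphi'(z)|\le\frac{1}{2\pi}\cdot 2\pi\rho\cdot\frac{L\rho^\alpha}{\rho^2}=L\rho^{\alpha-1}=L\,2^{1-\alpha}(1-|z|)^{\alpha-1},
\]
which is exactly \eqref{eq:holder-derivative}. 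If one prefers to avoid any boundary issue, one can instead work on $\overline{B(z,\rho')}$ with $\rho'<\rho$ and let $\rho'\to\rho$; as shown, $\rho$ itself is already admissible.

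For \eqref{eq:holder-tail-bound}, fix $\theta$ and apply \eqref{eq:holder-derivative} at $se^{i\theta}$, where $|se^{i\theta}|=s$. This gives
\[
  \int_r^1|\varphi'(se^{i\theta})|\,ds\le 2^{1-\alpha}L\int_r^1(1-s)^{\alpha-1}\,ds=\frac{2^{1-\alpha}L}{\alpha}(1-r)^\alpha ,
\]
and the integral converges because $\alpha>0$. The right-hand side does not depend on $\theta$, so taking the supremum over $\theta$ proves the bound.

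No step here is a serious obstacle. The one point needing care is choosing the radius $\rho=(1-|z|)/2$ so that the circle stays inside $\bD$, where \eqref{eq:holder-riemann-map} applies, and so that the constant comes out as $2^{1-\alpha}$. Combined with \cref{thm:radial-tail}, the tail bound gives $\tau_\varphi(r)\to0$, which is the second assertion of \cref{thm:holder}.
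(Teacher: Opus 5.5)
Your proof is correct and matches the paper's argument: you apply Cauchy's formula for $\varphi'$ on the circle of radius $(1-|z|)/2$ after subtracting the constant $\varphi(z)$, use the H\"older bound on that circle, and then integrate the resulting derivative estimate along each radius to get the tail bound.
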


\begin{proof}
Fix $z\in\bD$ and set $\delta \coloneqq (1-|z|)/2$.
Applying Cauchy's formula on $|w-z|=\delta$ and subtracting the constant $\varphi(z)$ gives
\[
  \varphi'(z)
  =\frac{1}{2\pi i}
   \int_{|w-z|=\delta}
   \frac{\varphi(w)-\varphi(z)}{(w-z)^2}\,dw.
\]
The H\"older estimate therefore yields
\[
  |\varphi'(z)|\leq L\delta^{\alpha-1}
  =2^{1-\alpha}L(1-|z|)^{\alpha-1},
\]
which is \eqref{eq:holder-derivative}.
Integrating this bound along each radius from $r$ to $1$ proves \eqref{eq:holder-tail-bound}.
\end{proof}

\begin{proof}[Proof of the latter part of \cref{thm:holder}]
    If $\Omega$ is a H\"older domain, \cref{lem:holder-derivative} implies the radial-tail condition \eqref{eq:tail-condition}; hence \cref{thm:radial-tail} shows that $\partial\Omega$ is Liouville regular.
    
    Every bounded simply connected John domain is a H\"older domain~\cite[p.~103]{Pommerenke92}, while every quasidisk is a John domain~\cite[Theorem~5.9]{Pommerenke92}.
    The bounded complementary component of a planar quasicircle is a quasidisk.
    The remaining assertions follow.
\end{proof}

The Koch snowflake is an example of a planar quasicircle.
Hence, it inscribes a rectangle in every prescribed similarity class by \cref{thm:holder}.

\section{Area-preserving bi-Lipschitz homeomorphisms}\label{sec:bilip}

We next show that the property of being Liouville regular is invariant under a bi-Lipschitz homeomorphism preserving Lebesgue measure on $\bR^2$. 

\begin{lemma}\label{lem:cocycle}
    Let $\phi\colon\bR^2\to\bR^2$ be an orientation-preserving area-preserving bi-Lipschitz homeomorphism.
    There exists a locally Lipschitz function $S_\phi\colon\bR^2\to\bR$, unique up to an additive constant, such that for every rectifiable arc $\gamma\colon[s,t]\to\bR^2$,
    \[
      \int_{\phi\circ\gamma}\lambda-
      \int_\gamma\lambda
      =
      S_\phi(\gamma(t))-S_\phi(\gamma(s)).
    \]
\end{lemma}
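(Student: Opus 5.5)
The plan is to realize $S_\phi$ as a primitive of the closed $1$-form $\phi^*\lambda-\lambda$, which here has only $L^\infty_{\mathrm{loc}}$ coefficients. By Rademacher's theorem $\phi$ is differentiable almost everywhere with $D\phi\in L^\infty_{\mathrm{loc}}$. Area preservation together with orientation preservation gives $\det D\phi=1$ almost everywhere. I therefore set
\[
  \alpha\coloneqq\phi^*\lambda-\lambda=\bigl(\xi\circ\phi\bigr)\,d(x\circ\phi)-\xi\,dx ,
\]
a $1$-form with locally bounded measurable coefficients. The first step is to show that $d\alpha=0$ in the sense of distributions. For this I will approximate $\phi$ by mollifications $\phi_\epsilon=\phi*\rho_\epsilon$. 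These are smooth, with $D\phi_\epsilon\to D\phi$ in $L^1_{\mathrm{loc}}$ and almost everywhere, and with locally uniform Lipschitz bounds. For a test function $\psi$, the pairing of $d(\phi_\epsilon^*\lambda)$ against $\psi$ equals $\int\psi\,\det D\phi_\epsilon\,dx\,d\xi$. Since the Jacobian is a null Lagrangian, $\det D\phi_\epsilon\to\det D\phi=1$ weakly (even strongly in $L^1_{\mathrm{loc}}$ here, by dominated convergence). Hence $d(\phi^*\lambda)=dx\wedge d\xi$-type area form $=d\lambda$ distributionally, so $d\alpha=0$.

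The second step is the Poincaré lemma for such forms on $\bR^2$. Define $S_\phi(z)=\int_{[0,z]}\alpha$ along the straight segment from $0$ to $z$. A cleaner route is to define $S_\phi$ as the limit of the smooth primitives $S_\epsilon$ of $\phi_\epsilon^*\lambda-\lambda-(\text{exact correction})$. Mollified forms are not closed, so I will instead write things concretely. Let $h_\epsilon$ solve $dh_\epsilon=\phi_\epsilon^*\lambda-\lambda-\beta_\epsilon$ with $\beta_\epsilon\to0$, or simply mollify $\alpha$ itself: $\alpha_\epsilon=\alpha*\rho_\epsilon$ is smooth and closed, since mollification commutes with $d$. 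Its radial primitive $S_\epsilon(z)=\int_0^1\alpha_\epsilon(tz)[z]\,dt$ satisfies $|dS_\epsilon|\le\|\alpha\|_{L^\infty(B_R)}$ on $B_R$. By Arzelà--Ascoli, a subsequence converges locally uniformly to a locally Lipschitz $S_\phi$ with $dS_\phi=\alpha$ almost everywhere. Uniqueness up to a constant follows because two such functions have a.e.\ vanishing gradient and are Lipschitz, hence differ by a constant.

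The third step, which I expect to be the main obstacle, is verifying the formula along an \emph{arbitrary} rectifiable arc $\gamma$. An arc can lie entirely in the null set where $D\phi$ or $dS_\phi$ fails to exist, or fails to equal $\alpha$. The identity $\int_{\phi\circ\gamma}\lambda=\int_\gamma\phi^*\lambda$ is not meaningful pointwise there. My plan is to avoid pointwise chain rules and use a Fubini/translation argument combined with continuity. Fix $\gamma$, parametrized by arclength, and consider the translates $\gamma+v$ for $v$ in a small disk. For almost every $v$, the curve $\gamma+v$ meets the bad set in a set of $\mathcal H^1$-measure zero along the curve. This follows by integrating the indicator of the bad set over $v$ and $s$ and applying Fubini. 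For such $v$, the chain rule holds for a.e.\ $s$, both for the Lipschitz map $\phi\circ(\gamma+v)$ and for $S_\phi\circ(\gamma+v)$. The integrands then agree a.e., giving
\[
\int_{\phi\circ(\gamma+v)}\lambda-\int_{\gamma+v}\lambda=S_\phi(\gamma(t)+v)-S_\phi(\gamma(s)+v).
\]
Finally I let $v\to0$ along such good $v$. The right-hand side converges by continuity of $S_\phi$. The left-hand side converges because $\phi\circ(\gamma+v)\to\phi\circ\gamma$ uniformly with uniformly bounded length, at most $\mathrm{Lip}(\phi)\,\mathrm{length}(\gamma)$. Line integrals of the smooth form $\lambda$ are continuous under uniform convergence with bounded length. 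This is seen by writing $\int\xi\,dx$ as a Riemann--Stieltjes integral and using the same step-function argument as in \cref{prop:bv}.
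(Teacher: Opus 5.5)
Your argument is correct, but it takes a genuinely different route from the paper.

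\textbf{The paper's route.} The paper never differentiates $\phi$. It first proves $\int_{\phi\circ\Gamma}\lambda=\int_\Gamma\lambda$ for every closed rectifiable curve $\Gamma$, combining three facts:
\begin{enumerate}
\item the winding-number form of Green's theorem, $\int_\Gamma\lambda=-\int_{\bR^2}\Ind_\Gamma(z)\,dz$;
\item the topological invariance $\Ind_{\phi\circ\Gamma}(\phi(z))=\Ind_\Gamma(z)$ for an orientation-preserving homeomorphism;
\item measure preservation, used to change variables.
\end{enumerate}
Hence $\int_{\phi\circ\gamma}\lambda-\int_\gamma\lambda$ depends only on the endpoints. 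The paper then defines $S_\phi$ by evaluating this quantity on any rectifiable arc from a base point, so the identity holds for every rectifiable arc by construction. Local Lipschitz continuity follows by evaluating on segments. The obstacle you isolate in your third step never arises there: arcs lying inside the null set where $D\phi$ or $dS_\phi$ is undefined cause no trouble. So the paper needs no Rademacher, mollification, Arzel\`a--Ascoli, or Fubini translation argument. The price is reliance on Green's theorem for arbitrary non-simple rectifiable closed curves, a classical but less standard result, and on degree theory for the index.

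\textbf{What your route buys.} You replace these with standard real analysis. You also get more information, namely $dS_\phi=\phi^*\lambda-\lambda$ almost everywhere. Your translation-plus-continuity step is sound. Take a Borel null superset of the bad set before applying Fubini. The chain rule only needs differentiability of $\phi$ and $S_\phi$ at the single point $\gamma(s)+v$. Convergence of the left-hand side as $v\to0$ follows from uniform convergence with length bounded by $\Lip(\phi)\,\mathrm{length}(\gamma)$.

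\textbf{Points to tighten.}
\begin{itemize}
\item The area formula alone only gives $|\det D\phi|=1$ a.e., so the sign needs a line of justification. At a point of differentiability, bi-Lipschitz continuity makes $D\phi$ invertible. The local degree of $\phi$ there equals the sign of the Jacobian, and it is $+1$ because $\phi$ preserves orientation.
\item Uniqueness in the sense of the lemma is immediate from the identity applied to segments. Your gradient argument only covers functions already known to satisfy $dS=\alpha$ a.e.
\item Delete the abandoned false starts in your second step. Integrating the merely $L^\infty$ form $\alpha$ along straight segments is not meaningful as written. The $h_\epsilon,\beta_\epsilon$ detour is unused.
\end{itemize}
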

\begin{proof}
    For a rectifiable closed curve $\Gamma$, the winding-number form of Green's theorem~\cite[{{10--14}}]{Apostol57}\footnote{Note that this result is only written in the first edition and has been removed from the second edition onward.} gives
    \[
      \int_\Gamma\lambda
      =-
      \int_{\bR^2}\Ind_\Gamma(z)\,dz,
    \]
    with the sign determined by $d\lambda=d\xi\wedge dx$.  Since $\phi$ is orientation preserving,
    \[
      \Ind_{\phi\circ\Gamma}(\phi(z))=\Ind_\Gamma(z).
    \]
    Since $\phi$ preserves Lebesgue measure,
    \[
      \int_{\phi\circ\Gamma}\lambda
      =
      \int_\Gamma\lambda.
    \]
    Consequently
    \[
      \mathcal A_\phi(\gamma)
      :=
      \int_{\phi\circ\gamma}\lambda-
      \int_\gamma\lambda
    \]
    depends only on the endpoints.  
    Fix $z_0$ and define $S_\phi(z)$ by evaluating $\mathcal A_\phi$ on any rectifiable arc from $z_0$ to $z$.
    
    To prove local Lipschitz continuity, take $z,w$ in a fixed compact set and evaluate $S_\phi(z)-S_\phi(w)$ on the line segment from $w$ to $z$.  
    The segment and its image have total length at most $(1+\Lip(\phi))|z-w|$, while the $\xi$-coordinates of both arcs remain bounded on a fixed compact set.  
    Thus
    \[
      |S_\phi(z)-S_\phi(w)|\leq C|z-w|,
    \]
    which completes the proof.
\end{proof}

\begin{theorem}\label{thm:bilip}
    Let $\phi\colon\bR^2\to\bR^2$ be an area-preserving bi-Lipschitz homeomorphism.
    A Jordan arc $\gamma \colon [a,b] \to \bR^2$ is Liouville regular if and only if $\phi \circ \gamma$ is Liouville regular.
    The same holds for Jordan curves.
\end{theorem}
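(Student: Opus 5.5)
We first reduce to orientation-preserving maps. Since $\phi^{-1}$ is again an area-preserving bi-Lipschitz homeomorphism, it suffices to prove one implication: if $\gamma$ is Liouville regular, then so is $\phi\circ\gamma$. If $\phi$ reverses orientation, we write $\phi=\rho\circ\phi'$ with $\rho(x,\xi)=(x,-\xi)$ and $\phi'$ orientation preserving. The reflection $\rho$ satisfies $\rho^*\lambda=-\lambda$. So if $(\gamma_n,f_n)_n$ approximates $(\gamma,f)$, then $(\rho\circ\gamma_n,-f_n)_n$ approximates $(\rho\circ\gamma,-f)$, because $\rho\circ\gamma_n$ is still a $C^1$-embedding. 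Hence we may assume $\phi$ is orientation preserving, and we can then use the locally Lipschitz function $S_\phi$ of \cref{lem:cocycle}.

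Let $(\gamma_n,f_n)_n$ be an approximating sequence for $(\gamma,f)$. The curves $\phi\circ\gamma_n$ converge uniformly to $\phi\circ\gamma$, but they are in general only Lipschitz, not $C^1$, so they cannot be used directly in \cref{def:arc}. This is the main obstacle. We handle it with the diagonal argument of \cref{lem:diagonal}, using the rectifiable case as an intermediate class.

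Each $\phi\circ\gamma_n$ is the bi-Lipschitz image of a $C^1$-embedding, so it is a rectifiable Jordan arc. By \cref{cor:pvar} (more precisely \cref{prop:bv}) it is Liouville regular, with Liouville primitive the Riemann--Stieltjes integral
\[
  \widetilde f_n(t)=\int_a^t\xi_{\phi\circ\gamma_n}\,dx_{\phi\circ\gamma_n}=\int_{\phi\circ\gamma_n|_{[a,t]}}\lambda .
\]
This is exactly the line integral of $\lambda$ along the rectifiable arc, which is the quantity appearing in \cref{lem:cocycle}. Applying that lemma to the subarc $\gamma_n|_{[a,t]}$ gives
\[
  \widetilde f_n(t)=f_n(t)+S_\phi(\gamma_n(t))-S_\phi(\gamma_n(a)).
\]
All $\gamma_n$ take values in a fixed compact set, on which $S_\phi$ is uniformly continuous (indeed Lipschitz). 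Since $\gamma_n\to\gamma$ uniformly, $S_\phi\circ\gamma_n\to S_\phi\circ\gamma$ uniformly. Hence $\widetilde f_n$ converges uniformly to the continuous function
\[
  t\mapsto f(t)+S_\phi(\gamma(t))-S_\phi(\gamma(a)).
\]
\Cref{lem:diagonal} then shows that $\phi\circ\gamma$ is Liouville regular, with this function as a Liouville primitive.

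For Jordan curves the argument is identical. Write $c_n\circ e$ for the periodic lift, note that $\phi\circ c_n$ is a rectifiable Jordan curve, and use the following inputs:
\begin{itemize}
  \item Liouville regularity of $\phi\circ c_n$, obtained either from the cyclic form of \cref{lem:bg-interpolation} together with the argument of \cref{prop:bv}, or by splitting $\phi\circ c_n$ into two rectifiable arcs and applying \cref{prop:closing};
  \item the cyclic version of \cref{lem:diagonal}, with uniform convergence on compact subsets of $\bR$.
\end{itemize}
The cocycle identity again gives locally uniform convergence of the primitives. If the closing route is used, the resulting primitive differs only by the explicit formula of \cref{prop:concat}, which agrees with the line integral.
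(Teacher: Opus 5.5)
Your proposal is correct and follows essentially the same route as the paper: reduce to orientation-preserving $\phi$, regard the Lipschitz arcs $\phi\circ\gamma_n$ as Liouville regular via \cref{prop:bv}, identify their Riemann--Stieltjes primitives through the cocycle $S_\phi$ of \cref{lem:cocycle}, and conclude with \cref{lem:diagonal}. Your explicit check that the reflection $(x,\xi)\mapsto(x,-\xi)$ preserves Liouville regularity (via $\rho^*\lambda=-\lambda$) and your treatment of the curve case only fill in details that the paper leaves implicit.
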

\begin{proof}
    It is enough to prove one direction, since the inverse $\phi^{-1}$ has the same properties. 
    We may assume that $\phi$ is orientation-preserving by composing it with the map $(x,\xi) \mapsto (x,-\xi)$.
    
    Let $f$ be a Liouville primitive for a Liouville regular arc $\gamma$ and $(\gamma_n,f_n)_n$ be an approximating sequence for $(\gamma,f)$; the curve case is identical on each period.  
    Set $d_n \coloneqq \phi\circ\gamma_n$.
    Then each $d_n$ is a Lipschitz embedded arc.
    By \cref{lem:cocycle}, after fixing the additive constant in $S_\phi$, a natural Liouville primitive along $d_n$ is given by 
    \[
      g_n \coloneqq f_n+S_\phi\circ\gamma_n-S_\phi(\gamma_n(a)).
    \]
    Since $S_\phi$ is uniformly continuous on a compact neighborhood of the images of the $\gamma_n$,
    \[
      g_n\to
      f+S_\phi\circ\gamma-S_\phi(\gamma(a))
    \]
    uniformly.
    
    The first coordinate of each Lipschitz arc $d_n$ has bounded variation, so $g_n$ is exactly the function given by the Riemann--Stieltjes integral along $d_n$ with normalization.
    Hence, \cref{prop:bv} shows that $d_n$ is Liouville regular with Liouville primitive $g_n$.
    Apply \cref{lem:diagonal} to the sequence $(d_n,g_n)_n$ to conclude that $\phi\circ\gamma$ is Liouville regular.
\end{proof}

\begin{proof}[Proof of \cref{thm:main}]
Arcs of type \textup{(i)} are Liouville regular by \cref{prop:pq-variation}; arcs of type \textup{(ii)} are Liouville regular by \cref{prop:bv}; and arcs of type \textup{(iii)} are Liouville regular by \cref{thm:bilip}.  The finite cyclic concatenation is then Liouville regular by \cref{prop:closing}.
\end{proof}

\section{Examples and the quadratic-variation threshold}\label{sec:examples}

In this section, we give examples and a non-example of Liouville regular arcs.

\subsection{Spirals}

For $\alpha>0$ and $\beta\in\bR$, set
\[
  \rho_{\alpha,\beta}(\theta)
  \coloneqq 
  \theta^{-\alpha}(\log\theta)^{-\beta},
  \quad \theta\geq \mathrm e,
\]
and
\[
  \gamma_{\alpha,\beta}(\theta)
  \coloneqq 
  \bigl(
    \rho_{\alpha,\beta}(\theta)\cos\theta,
    \rho_{\alpha,\beta}(\theta)\sin\theta
  \bigr).
\]
After deleting a harmless initial segment, the radius is strictly decreasing.  We compactify the parameter interval by adding $\gamma_{\alpha,\beta}(\infty)=0$, obtaining a Jordan arc $\gamma_{\alpha,\beta} \colon [a_{\alpha,\beta},\infty] \to \bR^2$.

\begin{lemma}\label{lem:spiral-action}
    The improper integral
    \[
      \int_{\mathrm e}^{\infty}\xi(\theta)\,dx(\theta)
    \]
    converges if and only if
    \[
      2\alpha>1,
      \quad\text{or}\quad
      2\alpha=1\ \text{ and }\ 2\beta>1.
    \]
    In all other cases it diverges to $-\infty$.
\end{lemma}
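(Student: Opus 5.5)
We will compute $\xi\,dx$ along $\gamma_{\alpha,\beta}$ explicitly in polar form. Write $\rho=\rho_{\alpha,\beta}$, $x=\rho\cos\theta$, $\xi=\rho\sin\theta$. Then
\[
  \xi\,\frac{dx}{d\theta}
  = \rho\rho'\sin\theta\cos\theta-\rho^2\sin^2\theta
  = \tfrac12\rho\rho'\sin 2\theta-\tfrac12\rho^2+\tfrac12\rho^2\cos2\theta .
\]
The plan is to show that the two oscillatory terms have convergent improper integrals for every $\alpha>0$, $\beta\in\bR$. The improper integral then converges exactly when $\int_{\mathrm e}^\infty\rho^2\,d\theta<\infty$, and otherwise it diverges to $-\infty$, since $-\tfrac12\rho^2<0$.

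For the oscillatory terms we integrate by parts, or equivalently apply the Dirichlet test.
\begin{itemize}
\item The term $\tfrac12\rho^2\cos2\theta$ has bounded primitive $\tfrac14\sin2\theta$ against $\rho^2$. The function $\rho^2$ is eventually monotone decreasing to $0$; this is the ``harmless initial segment'' deleted above. Hence $\int^\infty\rho^2\cos2\theta\,d\theta$ converges.
\item For $\tfrac12\rho\rho'\sin2\theta=\tfrac14(\rho^2)'\sin2\theta$, note that $(\rho^2)'=-\rho^2\bigl(2\alpha/\theta+2\beta/(\theta\log\theta)\bigr)$ is eventually of one sign. It tends to $0$ and is integrable, since $\int|(\rho^2)'|\le\rho^2(\theta_0)<\infty$. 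So this term is even absolutely convergent.
\end{itemize}
In both cases the only thing to check is eventual monotonicity, or eventual sign, of $\rho^2$ and its derivative. This is a routine differentiation of $\theta^{-2\alpha}(\log\theta)^{-2\beta}$.

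It remains to decide when $\int_{\mathrm e}^\infty\theta^{-2\alpha}(\log\theta)^{-2\beta}\,d\theta$ converges. This is the classical Bertrand integral.
\begin{itemize}
\item If $2\alpha>1$, it converges for every $\beta$: compare with $\theta^{-(1+2\alpha)/2}$, say.
\item If $2\alpha<1$, it diverges for every $\beta$: compare with $\theta^{-1}$.
\item If $2\alpha=1$, substitute $u=\log\theta$ to obtain $\int_1^\infty u^{-2\beta}\,du$. This converges if and only if $2\beta>1$.
\end{itemize}
In the divergent cases, the convergent oscillatory parts plus $-\tfrac12\int\rho^2=-\infty$ give divergence to $-\infty$. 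That is the final assertion.

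The only mildly delicate point is justifying the Dirichlet test near the chosen starting point, which requires eventual monotonicity. Convergence of the improper integral is unaffected by the finite initial segment $[\mathrm e,\theta_0]$, on which the integrand is continuous. I do not expect any genuine obstacle; the main care is the bookkeeping of signs in the decomposition of $\xi\,dx$.
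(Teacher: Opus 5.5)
Your proof is correct, and its skeleton is the same as the paper's: rewrite $\xi x'$ as $-\tfrac12\rho^2$ plus oscillatory terms, show the oscillatory terms contribute a finite amount, then classify the Bertrand integral.

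The one real difference is how you handle the oscillatory part. You treat $\tfrac12\rho^2\cos2\theta$ by the Dirichlet test and $\tfrac12\rho\rho'\sin2\theta$ by absolute convergence, and both steps need eventual monotonicity of $\rho^2$. The paper instead notes that the two terms combine exactly:
\[
  \tfrac12\rho\rho'\sin2\theta+\tfrac12\rho^2\cos2\theta
  =\frac{d}{d\theta}\Bigl(\tfrac14\rho^2\sin2\theta\Bigr).
\]
Their integral is therefore a boundary term, which tends to $0$ simply because $\rho\to0$. This identity is shorter and needs no monotonicity of the radial profile. That is why the paper can reuse it directly for the coil $\kappa$ in the proof of \cref{lem:finite-coil}, where it gives the action exactly.

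Your version, in exchange, writes out the Bertrand-integral case analysis ($2\alpha>1$, $2\alpha<1$, and $2\alpha=1$ via $u=\log\theta$), which the paper leaves to the reader.
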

\begin{proof}
Writing $\rho=\rho_{\alpha,\beta}$, we have
\[
\begin{split}
  \xi(\theta) x'(\theta)
  &=\rho\rho'\sin\theta\cos\theta-\rho^2\sin^2\theta\\
  &=\frac12\rho\rho'\sin(2\theta)
    +\frac12\rho^2\cos(2\theta)-\frac12\rho^2\\
  &=\frac{d}{d \theta }\left(\frac14 \rho^2 \sin (2\theta )\right)-\frac12\rho^2.
\end{split}
\]
The integration of the first term converges since 
\[\frac14 \rho(\theta)^2 \sin (2\theta )\to 0 \quad (\theta\rightarrow \infty).
\] 
Thus convergence is equivalent to
\[
  \int_{\mathrm e}^{\infty}\theta^{-2\alpha}(\log\theta)^{-2\beta}\,d\theta<\infty,
\]
which gives the stated criterion.
\end{proof}

We need one elementary rigidity fact for a subarc that is already $C^1$.

\begin{lemma}\label{lem:local-rigidity}
    Let $\gamma\colon [a,b] \to\bR^2$ be a Liouville regular Jordan arc and suppose that $\gamma|_J$ is a $C^1$-embedding on an open subinterval $J \subset (a,b)$.
    If $f$ is a Liouville primitive of $\gamma$, then for $s,t\in J$,
    \[
      f(t)-f(s)=\int_s^t\gamma^*\lambda.
    \]
\end{lemma}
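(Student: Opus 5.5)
The plan is to reduce to a short subarc on which $\gamma$ is a graph, and there compare $\int\gamma_n^*\lambda$ with $\int\gamma^*\lambda$ through a Green-type area identity. The main point to control is that the $\gamma_n$ may oscillate wildly (unbounded variation) inside a thin neighbourhood of $\gamma(J)$. Only their embeddedness can prevent the enclosed signed area from staying large.

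\emph{Reductions.} Fix $[s,t]\subset J$. Since both sides of the identity are additive in $[s,t]$, it suffices to treat a short subinterval $[s,t]$ on which $\gamma'$ stays close to a fixed nonzero direction. Precompose everything with a rotation $R$; it is linear and area preserving. As in \cref{prop:C2_area_pres}, there is a smooth $h$ with $R^*\lambda-\lambda=dh$. Because $\gamma_n\to\gamma$ uniformly, the $h$-terms converge to $h(\gamma(t))-h(\gamma(s))$ on both sides. So we may assume that on $[s,t]$ the curve $\gamma$ is a $C^1$ graph $\xi=g(x)$ over an interval $I$, with $x_\gamma$ strictly increasing. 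Put $G'=g$. For the approximating sequence $(\gamma_n,f_n)$ write
\[
  f_n(t)-f_n(s)=\int_s^t\bigl(\xi_n-g(x_n)\bigr)\,dx_n+G(x_n(t))-G(x_n(s)).
\]
The last difference converges to $G(x_\gamma(t))-G(x_\gamma(s))=\int_s^t\gamma^*\lambda$. It therefore remains to show that the first integral tends to $0$.

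\emph{The area term.} Close up $\gamma_n|_{[s,t]}$ by short vertical segments at its endpoints and the graph of $g$ traversed backwards; this gives a closed curve $\Gamma_n$ with $\int_{\Gamma_n}\lambda$ equal to the first integral up to $o(1)$. The segments have length $o(1)$, so their $\lambda$-integral is $o(1)$. By the winding-number Green formula used in \cref{lem:cocycle}, the first integral equals $-\int\Ind_{\Gamma_n}+o(1)$. Moreover, $\Gamma_n$ lies in the strip $S_n=\{|\xi-g(x)|\le\varepsilon_n\}$ over a slight enlargement of $I$, where $\varepsilon_n=\|\gamma_n-\gamma\|_\infty\to0$ (by uniform continuity of $g$).

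\emph{Bounding the winding numbers (main obstacle).} The difficulty is that $\Ind_{\Gamma_n}$ need not be bounded: an embedded arc can spiral many times inside a thin strip. The plan is to use embeddedness. After a $C^1$-small perturbation, which changes $f_n$ by $o(1)$ as in \cref{lem:polygonal-rounding}, $\gamma_n$ meets the graph transversally in finitely many points. Cutting at these points decomposes $\Gamma_n$ into Jordan curves, each formed by a subarc of $\gamma_n$ and a subarc of the graph. Each such Jordan curve bounds a domain inside the simply connected strip $S_n$, and these domains lie either above or below the graph. Since the $\gamma_n$-subarcs are pairwise disjoint, the domains on each side form a laminar (nested or disjoint) family. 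The key claim, which I expect to be the hardest step, is that the alternating orientations in a nested chain force the signed sum to telescope. The expected outcome is
\[
  \int|\Ind_{\Gamma_n}|\le C\,\mathrm{Area}(S_n)=O(\varepsilon_n|I|)\to0.
\]
The orientations alternate because consecutive crossings of an embedded arc with the graph, ordered along nested domains, switch side. If this telescoping fails to be clean, the fallback is to bound each point's winding number by its number of nested levels. That count is controlled by a vertical segment from the point to the edge of $S_n$, since $\gamma_n$ cannot re-cross itself. This yields the convergence of the first integral to $0$, and hence $f(t)-f(s)=\int_s^t\gamma^*\lambda$.
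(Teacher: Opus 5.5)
There is a genuine gap in the winding-number step. Your reductions and the identity $f_n(t)-f_n(s)=\int_s^t(\xi_n-g(x_n))\,dx_n+G(x_n(t))-G(x_n(s))$ are fine. The problem is the bound $\int|\Ind_{\Gamma_n}|\le C\,\mathrm{Area}(S_n)$: it does not follow from embeddedness, and it is false in general. You keep the endpoints at the fixed parameters $s,t$, and $\gamma_n(s)$ may sit inside a tightly wound coil of $\gamma_n$.

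Here is a concrete counterexample.
\begin{itemize}
\item Let $\gamma$ be a segment of the $x$-axis, so $g=G=0$.
\item Let $\gamma_n$ be the axis with the coil $\sigma_{R_n,N_n}$ of \cref{lem:finite-coil} inserted in $B(\gamma(s),R_n)$, with $R_n\to0$ and $N_nR_n^2\to\infty$.
\item Parametrize it so that $\gamma_n(s)=q_{R_n,N_n}$ and the coil occupies a parameter interval shrinking to $s$.
\end{itemize}
These $\gamma_n$ are smooth embeddings converging uniformly to $\gamma$. Yet $|\int_s^t\gamma_n^*\lambda|\geq C'N_nR_n^2-2CR_n^2\to\infty$, while $\int_s^t\gamma^*\lambda=0$.

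In the language of your argument, an outward spiral produces nested arches on one side of the graph that all have the \emph{same} orientation. For $r=c\theta$ the upper arches are $[-1,0]\subset[-3,2]\subset[-5,4]\subset\cdots$ in units of $c\pi$, all traversed from right to left. So the claimed alternation of orientations is false. Cancellation happens only when the arc also leaves the spiral, i.e.\ when its endpoints lie outside the nested structure. Your fallback fails for the same reason: a short vertical segment can be crossed by $\gamma_n$ arbitrarily often.

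The missing idea is that the hypothesis $f_n\to f$ uniformly, with $f$ continuous, must enter the area estimate itself, not only the final limit. The example shows that $\int_s^t\gamma_n^*\lambda\to\int_s^t\gamma^*\lambda$ is false for general embedded approximants.

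The paper handles this by shifting the endpoints.
\begin{itemize}
\item It chooses $s_n\to s$ and $t_n\to t$ so that $\gamma_n|_{[s_n,t_n]}$ is a crosscut of a thin curvilinear rectangle around $\gamma([s,t])$, joining its two transverse sides.
\item Closing a crosscut along the boundary of the rectangle gives a Jordan curve, so Green's theorem bounds the discrepancy by the area of the rectangle.
\item The shift is harmless because $|f_n(s_n)-f_n(s)|\le2\|f_n-f\|_\infty+|f(s_n)-f(s)|\to0$. This is exactly where continuity of $f$ is used.
\end{itemize}

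Your graph set-up adapts directly. Enlarge to $[s-\delta,t+\delta]$, still inside the graph region. Let $t_n$ be the first parameter there with $x_n=x_\gamma(t)$, and $s_n$ the last parameter before $t_n$ with $x_n=x_\gamma(s)$; injectivity of $x_\gamma$ gives $s_n\to s$ and $t_n\to t$. Then your closed curve $\Gamma_n$ is a sum of two Jordan curves lying in the strip: the crosscut closed along the top of the strip, and the region between the graph and the top. Hence $|\Ind_{\Gamma_n}|\le2$. Moreover the $G$-terms match exactly, since $x_n(s_n)=x_\gamma(s)$ and $x_n(t_n)=x_\gamma(t)$.
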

\begin{proof}
    Choose an approximating sequence $(\gamma_n,f_n)_n$ for $(\gamma,f)$, and fix $s<t$ in $J$.  
    We show that
    \[
      f(t)-f(s)=\int_s^t\gamma^*\lambda.
    \]

    Let $\varepsilon_0>0$.  
    Since $\gamma|_{[s,t]}$ is a compact $C^1$-embedded arc, it has a sufficiently thin tubular neighborhood $Q_{\varepsilon_0}$ that is a curvilinear rectangle with the following properties:
    \begin{enumerate}[(i)]
    \item $\gamma|_{[s,t]}$ is a crosscut of $Q_{\varepsilon_0}$ joining its two transverse sides and otherwise contained in its interior;
    \item $\Area (Q_{\varepsilon_0})<\varepsilon_0$.
    \end{enumerate}
    Fix a $C^1$-diffeomorphism $\varphi\ \colon [-1,1]\times [s,t]\to Q_{\varepsilon_0} $ such that $\varphi |_{\{0\}\times [s,t]}=\gamma|_{[s,t]}$. 
    For any $\varepsilon \in (0,\varepsilon_0)$, we can choose a strictly decreasing sequence $\delta_\varepsilon \in (0,1)$ satisfying $\Area (\varphi ([-\delta_\varepsilon, \delta_\varepsilon] \times [s,t]))<\varepsilon$. 
    Set $Q_\varepsilon \coloneqq \varphi ([-\delta_\varepsilon, \delta_\varepsilon] \times [s,t])$ for each $0< \varepsilon <\varepsilon_0$.
    
    Fix a real number $\varepsilon\in (0, \varepsilon_0)$. 
    For all sufficiently large $n$, uniform convergence $\gamma_n\to\gamma$ implies that $\gamma_n$ has a subarc
    \[
      \eta_{\varepsilon,n}=\gamma_{n}|_{[s_{\varepsilon, n},t_{\varepsilon, n}]}
    \]
    which is also a crosscut of $Q_\varepsilon$ joining the same two transverse sides, where $s_{\varepsilon, n}\to s$ and $t_{\varepsilon, n}\to t$ holds. 

    Write $\eta=\gamma|_{[s,t]}$. 
    Then $\varphi(\{\delta_\varepsilon\}\times [s,t])$ is one of the two long sides of $Q_\varepsilon$. 
    Complete each of $\eta$ and $\eta_{\varepsilon, n}$ to a Jordan curve by adjoining this long side and the appropriate portions of the two transverse sides. 
    Denote the resulting return arcs by $r_\varepsilon$ and $r_{\varepsilon, n}$, and the corresponding Jordan domains by $D_\varepsilon$ and $D_{\varepsilon, n}$, respectively.
    Both domains are contained in $Q_\varepsilon$. 
    Moreover, the endpoints of $\eta_{\varepsilon, n}$ converge to those of $\eta$, so $r_{\varepsilon}$ and $r_{\varepsilon,n}$ differ only by two subarcs of the transverse sides whose lengths tend to zero. 

    Consequently,
    \[
        \int_{r_{\varepsilon,n}}\lambda\to\int_{r_\varepsilon} \lambda \quad (n\to \infty).
    \]
    Applying Green's theorem to the two Jordan curves, with consistent orientations, gives
    \[
      \left|
        \int_{\eta_{\varepsilon,n}}\lambda-\int_\eta\lambda
      \right|
      \leq
      \left|
        \mathcal \Area(D_{\varepsilon,n})-\mathcal \Area(D_\varepsilon)
      \right|
      +
      \left|
        \int_{r_{\varepsilon,n}}\lambda-\int_{r_\varepsilon}\lambda
      \right|.
    \]
    Since $D_\varepsilon,D_{\varepsilon,n}\subset Q_\varepsilon$, we have
    \[
      \left|
        \Area (D_{\varepsilon,n})-\Area (D_\varepsilon)
      \right|
      \leq \Area (Q_\varepsilon)<\varepsilon.
    \]
    It follows that
    \begin{equation}\label{eq:limsup}
        \limsup_{n\to\infty}
      \left|
        \int_{s_{\varepsilon,n}}^{t_{\varepsilon,n}}\gamma_n^*\lambda
        -
        \int_s^t\gamma^*\lambda
      \right|
      \leq\varepsilon.
    \end{equation}

    Note that for any $\varepsilon, \varepsilon' \in (0, \varepsilon_0)$, the sequences $(s_{\varepsilon, n})_n$ and $(s_{\varepsilon', n})_n$ coincide for sufficiently large $n$, by the construction of $Q_\varepsilon, Q_{\varepsilon'}$. 
    The parallel statement for $(t_{\varepsilon, n})_n$ and $(t_{\varepsilon', n})_n$ also holds. 
    Hence, the left-hand side in \eqref{eq:limsup} is independent of $\varepsilon$, which implies
    \[
      \int_{s_{\varepsilon, n}}^{t_{\varepsilon, n}}\gamma_n^*\lambda
      \to
      \int_s^t\gamma^*\lambda \quad (n\rightarrow \infty)
    \]
    for any $\varepsilon\in (0,\varepsilon_0)$. 
    On the other hand, uniform convergence $f_n\to f$, continuity of $f$, and
    $s_n\to s$, $t_n\to t$ imply
    \[
      f_n(t_{\varepsilon, n})-f_n(s_{\varepsilon, n})\to f(t)-f(s) \quad (n\rightarrow \infty).
    \]
    Since $\displaystyle f_n(t_{\varepsilon, n})-f_n(s_{\varepsilon, n}) =  \int_{s_{\varepsilon, n}}^{t_{\varepsilon, n}}\gamma_n^*\lambda$,
    the desired identity follows.
\end{proof}

\begin{theorem}\label{thm:spiral}
The spiral $\gamma_{\alpha,\beta}$ is Liouville regular if and only if
\[
  2\alpha>1,
  \quad\text{or}\quad
  2\alpha=1\ \text{ and }\ 2\beta>1.
\]
\end{theorem}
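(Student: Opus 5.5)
The proof splits according to whether the improper integral of \cref{lem:spiral-action} converges. For necessity I would use \cref{lem:local-rigidity}; for sufficiency I would truncate the spiral and apply \cref{lem:diagonal}.

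\emph{Necessity.} Suppose $\gamma_{\alpha,\beta}$ is Liouville regular with Liouville primitive $f$, which is continuous on the compact interval $[a_{\alpha,\beta},\infty]$. On every open interval $J\subset(a_{\alpha,\beta},\infty)$ the spiral is a smooth embedding. \Cref{lem:local-rigidity} therefore gives
\[
f(\theta)-f(\theta_0)=\int_{\theta_0}^{\theta}\xi\,dx
\qquad\text{for finite }\theta_0<\theta .
\]
Continuity of $f$ at $\infty$ forces the right-hand side to converge as $\theta\to\infty$. By \cref{lem:spiral-action}, convergence fails, and the integral tends to $-\infty$, whenever $2\alpha<1$, or $2\alpha=1$ and $2\beta\le 1$. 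This rules out exactly the excluded parameter range.

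\emph{Sufficiency.} Assume the integral converges and set
\[
F(\theta)=\int_{a_{\alpha,\beta}}^{\theta}\xi\,dx ,
\]
extended continuously to $\infty$ by its limit. For large $T$, let $\widetilde\gamma^{(T)}$ be the arc that follows $\gamma_{\alpha,\beta}$ on $[a_{\alpha,\beta},T]$ and then runs along the straight segment from $\gamma_{\alpha,\beta}(T)$ to $0$, reparametrized on $[T,\infty]$.
\begin{enumerate}[(1)]
\item Injectivity: the radius is strictly decreasing, so the spiral meets the disk of radius $\rho(T)$ only in its tail beyond $T$, which is discarded. The radial segment meets the retained part only at $\gamma_{\alpha,\beta}(T)$, so $\widetilde\gamma^{(T)}$ is a Jordan arc.
\item Regularity: $\widetilde\gamma^{(T)}$ is rectifiable, hence Liouville regular by \cref{cor:pvar}. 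Its primitive $f^{(T)}$ is the Riemann--Stieltjes integral, by \cref{prop:bv} or by \cref{lem:local-rigidity} on the smooth pieces, so it agrees with $F$ on $[a_{\alpha,\beta},T]$.
\item Size of the correction: on the segment, $|\xi|\le\rho(T)$ and the length is $\rho(T)$. Hence $f^{(T)}$ varies by at most $\rho(T)^2\to0$ over $[T,\infty]$.
\end{enumerate}
It follows that $\|f^{(T)}-F\|_\infty\le\rho(T)^2+\sup_{\theta\ge T}|F(\theta)-F(\infty)|\to0$. The arcs also converge uniformly, since both $\widetilde\gamma^{(T)}$ and $\gamma_{\alpha,\beta}$ stay in the disk of radius $\rho(T)$ on $[T,\infty]$. \Cref{lem:diagonal} then gives Liouville regularity of $\gamma_{\alpha,\beta}$.

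The parametrizations on $[T,\infty]$ differ between $\widetilde\gamma^{(T)}$ and $\gamma_{\alpha,\beta}$. To keep one fixed domain, I would first reparametrize $[a_{\alpha,\beta},\infty]$ to a finite interval using \cref{lem:reparam}. The uniform estimates above only involve the sup over $[T,\infty]$, so the choice of parametrization there is irrelevant.

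The step I expect to be the main obstacle is the necessity direction. The subtle point is that \cref{lem:local-rigidity} pins down $f$ only on compact subintervals. The argument must not need any regularity at the endpoint $\infty$ itself; it uses only continuity of $f$ there, which is part of the definition, and I would make that explicit.
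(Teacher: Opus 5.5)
Your proof is correct. Necessity is the paper's argument: \cref{lem:local-rigidity} identifies the primitive, up to a constant, with $\int\xi\,dx$ on the open spiral. Continuity at the central endpoint then contradicts the divergence in \cref{lem:spiral-action}.

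For sufficiency you take a different route. The paper does not cap the spiral at all. After reparametrizing to a finite interval $[a_{\alpha,\beta},R_\infty]$, it takes $C^1$-diffeomorphisms $\varphi_n\colon[a_{\alpha,\beta},R_\infty]\to[a_{\alpha,\beta},R_n]$ equal to the identity on $[a_{\alpha,\beta},R_{n-1}]$. It then sets $\gamma_n=\gamma_{\alpha,\beta}\circ\varphi_n$, i.e.\ the truncated spiral stretched over the whole parameter interval. These arcs are already $C^1$-embeddings, and their primitives are exactly $F\circ\varphi_n$ in your notation. Uniform convergence therefore follows from uniform continuity of $F$ alone. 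No connector, no injectivity check, and no appeal to \cref{cor:pvar} or \cref{lem:diagonal} are needed.

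Your radial cap is also valid:
\begin{itemize}
\item the strict decrease of the radius gives injectivity;
\item the cap contributes at most $\rho(T)^2$;
\item the rectifiable criterion plus the diagonal lemma finish the argument.
\end{itemize}
It has the minor advantage that the approximants keep both endpoints of $\gamma_{\alpha,\beta}$. However, it routes a piecewise smooth arc with a single corner through \cref{cor:pvar}, whose proof relies on the Boedihardjo--Geng interpolation. Rounding that one corner directly would be the proportionate tool.

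One small slip: on $[T,\infty]$ you compare $F(T)$ with $F(\theta)$, so the bound should read $\rho(T)^2+2\sup_{\theta\geq T}|F(\theta)-F(\infty)|$. This does not affect the conclusion.
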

\begin{proof}
    Assume first that the improper integral in \cref{lem:spiral-action} converges. 
    Reparametrize the compactified spiral, without changing notation, by a finite interval $[a_{\alpha,\beta},R_\infty](\simeq [a_{\alpha,\beta},\infty])$ so that it is smooth on $[a_{\alpha,\beta},R_\infty)$. 
    Choose a srtrictly increasing sequence $(R_n)_n$ of real numbers greater than $a_{\alpha,\beta}$ with $R_n\nearrow R_\infty$ and orientation-preserving $C^1$-diffeomorphisms
    \[
      \varphi_n\colon
      [a_{\alpha,\beta},R_\infty]
      \to
      [a_{\alpha,\beta},R_n]
    \]
    that agree with the identity on $[a_{\alpha,\beta},R_{n-1}]$. 
    Set $\gamma_n=\gamma_{\alpha,\beta}\circ\varphi_n$.
    Then each $\gamma_n$ is a $C^1$-embedded arc, and uniform continuity of the compactified spiral gives $\gamma_n\to \gamma_{\alpha,\beta}$ uniformly.
    Define
    \[
      f(t) \coloneqq \int_{a_{\alpha,\beta}}^t
      \gamma_{\alpha,\beta}^*\lambda
      \quad (t\in [a_{\alpha,\beta},R_\infty)),
    \]
    and define $f(R_\infty)$ to be the corresponding improper integral.  
    Its convergence means precisely that $f$ is a continuous function defined on $[a_{\alpha,\beta},R_\infty]$.
    The primitive of $\gamma_n$ normalized at $a_{\alpha,\beta}$ is
    \[
      f_n(t)
      =
      \int_{a_{\alpha,\beta}}^t \gamma_n^*\lambda
      =
      f\bigl(\varphi_n(t)\bigr).
    \]
    Since $f$ is uniformly continuous and $\varphi_n\to\id$ uniformly, we have
    \[
      f_n=f\circ\varphi_n\to f
    \] uniformly on $[a_{\alpha,\beta},R]$.  
    Hence, $\gamma_{\alpha,\beta}$ is Liouville regular.
    
    Conversely, suppose that the compactified spiral is Liouville regular with Liouville primitive $f$. 
    By \cref{lem:local-rigidity}, on every compact subinterval of the open spiral, the function $f$ differs by a constant from the usual primitive $\int\xi\,dx$.  
    If the improper integral diverges, this usual primitive has no finite continuous limit at the central endpoint.  
    This contradicts the continuity of~$f$.
\end{proof}

\begin{remark}
    The only-if direction of \cref{thm:spiral} also admits a proof by microlocal sheaf theory which does not use \cref{lem:local-rigidity}.

    Set $\gamma=\gamma_{\alpha,\beta}$, after making the harmless initial truncation above, and let $-\gamma$ denote its image under the rotation $(x,\xi) \mapsto (-x,-\xi)$.  
    The strict decrease of the radius implies that $\gamma$ and $-\gamma$ meet only at the origin.  
    If $P$ is the outer endpoint of $\gamma$, join $P$ to $-P$ by either semicircle in $\partial B(0;\|P\|)$, and denote this rectifiable arc by $\eta$.  
    Then
    \[
        C=\gamma\cup(-\gamma)\cup\eta
    \]
    is a Jordan curve.

    The arc $\gamma$ is Liouville regular if and only if $C$ is Liouville regular.  
    Indeed, one implication follows from \cref{lem:restriction}.  
    Conversely, if $\gamma$ is Liouville regular, then so is $-\gamma$ by \cref{thm:bilip}, while $\eta$ is Liouville regular by \cref{cor:pvar}; hence \cref{prop:closing} shows that $C$ is Liouville regular.  
    Note also that $\Area(C)=0$, since each of $\gamma$ and $-\gamma$ is a countable union of smooth compact arcs together with the origin, and $\eta$ is rectifiable.

    Suppose now that the improper integral in \cref{lem:spiral-action} diverges, and assume for contradiction that $\gamma$ is Liouville regular and has null area.  Then $C$ is Liouville regular.  As explained in \cite[Remark~5.6]{AI_peg}, its Liouville primitive is unique up to an additive constant: the sheaf quantization $F_C$ of $C\times C\subset T^*\bR^2$ is uniquely determined, and the primitive can be recovered from the Tamarkin coordinate of its conic microsupport $\operatorname{SS}(F_C)$.
    
    Let $f_\gamma$ denote the restriction of this primitive to the open spiral.  Over the smooth locus $\gamma^\circ\times\gamma^\circ$, the corresponding deconification of $\operatorname{SS}(F_C)$ is, up to an overall translation in the Tamarkin direction, the single-valued graph
    \[
      t=-f_\gamma(\theta)-f_\gamma(\theta').
    \]

    By the involutivity theorem for microsupports~\cite{KS90,GV24coisotropic}, $\operatorname{SS}(F_C)$ is coisotropic.  
    A graph of the above form over the smooth Lagrangian $\gamma^\circ\times\gamma^\circ$ can have coisotropic conification only if it is Legendrian.  
    With the convention for the contact form used in the sheaf quantization, its Legendrian equation is
    \[
        d f_\gamma=\gamma^*\lambda.
    \]
    Equivalently, for every $\theta_1<\theta_2$ on the open spiral,
    \[
        f_\gamma(\theta_2)-f_\gamma(\theta_1)
        =
        \int_{\theta_1}^{\theta_2}\gamma^*\lambda.
    \]

    Since the open spiral is connected, $f_\gamma$ therefore differs by a single constant from the usual primitive.  
    By \cref{lem:spiral-action}, this primitive tends to $-\infty$ at the origin in the parameter range under consideration.  This contradicts the continuity of the Liouville primitive of $C$ at the origin and proves the only-if direction of \cref{thm:spiral}.

    The authors do not know whether \cref{lem:local-rigidity} itself admits a proof of this kind by microlocal sheaf theory.
\end{remark}

\subsection{A finite-coil non-example}\label{subsec:finite-coils}

The preceding examples have an actual infinite spiral at the singular endpoint. 
The next construction shows that this is not essential.  
We can arrange that every individual coil makes only finitely many turns, and that the curve is even differentiable at the accumulation point.  

We first record the elementary local construction that will be inserted at smaller and smaller scales.

\begin{lemma}\label{lem:finite-coil}
    There are universal constants $C,C'>0$ with the following property.
    For every $R>0$ and every integer $N\geq1$, there exists a smooth embedded arc
    \[
        \sigma_{R,N}\subset \overline{B(0,R)}
    \]
    joining $(-R,0)$ to $(R,0)$, agreeing with the $x$-axis near its endpoints, and otherwise contained in $B(0,R)$, such that
    \[
        \left|\int_{\sigma_{R,N}}\lambda\right|\leq C R^2.
    \]
    Moreover, there is a point $q_{R,N}$ on $\sigma_{R,N}$ such that the subarc $\sigma'_{R,N}$ of $\sigma_{R,N}$ connecting $(-R,0)$ and $q_{R,N}$ satisfying 
    \[
        \left|\int_{\sigma'_{R,N}}\lambda\right|
        \geq C' N R^2-C R^2.
    \]
\end{lemma}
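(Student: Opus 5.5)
By scaling, it suffices to construct a single arc $\sigma_{1,N}$ for each $N$ and then set $\sigma_{R,N}\coloneqq R\,\sigma_{1,N}$. Under the dilation $z\mapsto Rz$ the form $\lambda=\xi\,dx$ pulls back to $R^2\lambda$. Hence every integral of $\lambda$ scales by exactly $R^2$, and the constants $C,C'$ can be taken independent of $R$. The task is therefore a construction in the unit disk: the total integral $\int\lambda$ must stay bounded, while some initial subarc carries an integral of order $N$.

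For $R=1$, I would build the arc from a double spiral, in the spirit of a Legendrian zigzag. Start at $(-1,0)$ and follow the $x$-axis briefly. Then wind inward $N$ times along a smooth spiral, with radii decreasing roughly linearly from about $1/2$ to about $1/4$, until an inner point $q_{1,N}$ is reached. From there, return outward by a second spiral that winds $N$ times in the opposite rotational sense, with its coils interlaced between the inward coils. Finally, exit along the $x$-axis to $(1,0)$.

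The two spirals are disjoint. Take the inward spiral with radius $r=\tfrac12-\tfrac{\theta}{8\pi N}$, and the outward one with radius shifted by half a pitch, traversed with $\theta$ decreasing but mirrored, $(x,\xi)\mapsto(x,-\xi)$. A cleaner choice is to make the return spiral traverse the same rotational direction as the inward one, but run outward. One must then check which choice makes the two contributions cancel; I expect the following version to be the one that works.

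Use the identity from the proof of \cref{lem:spiral-action}:
\[
  \xi\,dx=d\bigl(\tfrac14\rho^2\sin 2\theta\bigr)-\tfrac12\rho^2\,d\theta .
\]
Along any polar-graph spiral, $\int\lambda$ therefore equals a boundary term of size $O(1)$ plus $-\tfrac12\int\rho^2\,d\theta$. Over $N$ turns with $\rho\in[1/4,1/2]$, the second term has absolute value at least $c N$. This gives the lower bound
\[
  \Bigl|\int_{\sigma'_{1,N}}\lambda\Bigr|\ge C'N-C,
\]
where the $-C$ absorbs the entry segment and the boundary term.

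For the return spiral, choose the outward path winding with $d\theta$ of the opposite sign. Its $-\tfrac12\int\rho^2\,d\theta$ term is then approximately $+\tfrac12\int\rho^2\,|d\theta|$ at radii half a pitch away. Since the radii differ by $O(1/N)$ over $N$ turns, the two winding terms cancel up to $O(1)$. Geometrically, the closed loop formed by the two spirals together with the axis pieces encloses area, counted with winding multiplicity, which is $O(1)$.

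The obstacle is reconciling the opposite winding with embeddedness. Two spirals in the same annulus that rotate in opposite senses must cross. I would therefore realize the turnaround at $q_{1,N}$ differently: the inward spiral goes clockwise, and the return runs outward along the same clockwise family interlaced between its coils, traversed backward in parameter. This makes $d\theta$ opposite and the arcs disjoint, exactly like a double (Fermat-type) spiral with an S-shaped turn at the centre.

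The total integral is then controlled by Green's theorem. Close the arc with the segment of the $x$-axis from $(1,0)$ back to $(-1,0)$, on which $\lambda=0$. By the winding-number form of Green's theorem used in \cref{lem:cocycle}, $\int_{\sigma_{1,N}}\lambda$ is minus the signed area weighted by winding number. For the double spiral, the winding number is bounded by $2$ everywhere, so this weighted area is at most $2\pi$. Smoothing the S-turn and the junctions with the axis costs $O(1)$, as in \cref{lem:polygonal-rounding}, and this completes the $R=1$ case.
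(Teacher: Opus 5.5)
Your construction and lower bound are the paper's. The paper also uses the linear spiral $r(\theta)=R(\tfrac12-\tfrac{\theta}{8\pi N})$ and a return branch of the same handedness, interlaced between the coils and traversed backward. Its offset is $\delta=R/(16N)$, a quarter pitch, so the exit connector must go around the spiral. It gets $\bigl|\int_\kappa\lambda\bigr|\geq\tfrac{\pi}{16}NR^2$ from the same identity.

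The real difference is the upper bound on $\int_{\sigma_{R,N}}\lambda$. The paper computes it directly: the two branch actions sum to $-\tfrac12\int_0^{2\pi N}(r^2-\widetilde r^2)\,d\theta$, the signed area of the thin ribbon, which is at most $\tfrac{\pi}{16}R^2$; the connectors are then bounded by their length. Your winding-number argument is shorter and shows the bound has nothing to do with spirals. However, the phrase ``for the double spiral the winding number is bounded by $2$'' is asserted without a reason, and you should supply one. Closing $\sigma$ with a semicircle of $\partial B(0,1)$ gives a Jordan curve, whose index lies in $\{0,\pm1\}$. Your closure along the diameter differs from it by the boundary of a half-disk. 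Hence $|\int_\sigma\lambda|\leq 2\pi$ for \emph{every} embedded arc in $\overline{B(0,1)}$ meeting the circle only at $(\pm1,0)$.

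With your half-pitch offset, the return branch can be taken to be exactly the point reflection $-\kappa$. Since $z\mapsto -z$ preserves $\lambda$ and reverses the arc, a point-symmetric S-turn and smoothing even make the total integral exactly zero.

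The scaling reduction to $R=1$ is a harmless simplification the paper does not use. The mirrored, opposite-handedness variant in your third paragraph should be deleted: as you note later, it would cross the inward spiral.
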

\begin{proof}
Set $c_0=\dfrac14, c_1=\dfrac12$.
For $R>0$ and $N\geq1$, define
\[
  r(\theta)
  =
  R\left(
    \frac12-\frac{\theta}{8\pi N}
  \right),
  \quad
  0\leq\theta\leq2\pi N.
\]
Thus $r$ decreases linearly from $R/2$ to $R/4$.  Consider the inward spiral
\[
  \kappa(\theta)
  =
  \bigl(
    r(\theta)\cos(\theta+\pi),
    r(\theta)\sin(\theta+\pi)
  \bigr),
  \quad
  0\leq\theta\leq2\pi N.
\]
It is embedded, makes exactly $N$ turns, and is contained in
$B(0,R/2)$.

Let us first spell out the two estimates appearing in the direct calculation.  
Writing $(x,\xi)=\kappa(\theta)$, we have
\[
  \int_\kappa\lambda
  =
  \int_0^{2\pi N}
  \left(
    r r'\sin\theta\cos\theta
    -
    r^2\sin^2\theta
  \right)\,d\theta.
\]
Since $r$ is decreasing and $|\sin\theta\cos\theta|\leq1/2$,
\[
\begin{split}
  \left|
    \int_0^{2\pi N}
    r r'\sin\theta\cos\theta\,d\theta
  \right|
  &\leq
  \frac12\int_0^{2\pi N}|r r'|\,d\theta\\
  &=
  \frac14
  \bigl(r(0)^2-r(2\pi N)^2\bigr)\\
  &=
  \frac{3}{64}R^2.
\end{split}
\]
This is independent of $N$: although the interval has length
$2\pi N$, the integral of $|r r'|$ depends only on the total change
of $r^2$.

For the second term, the lower bound $r(\theta)\geq R/4$ gives
\[
\begin{split}
  -\int_0^{2\pi N}r(\theta)^2\sin^2\theta\,d\theta
  &\leq
  -\frac{R^2}{16}
  \int_0^{2\pi N}\sin^2\theta\,d\theta\\
  &=
  -\frac{\pi}{16}NR^2.
\end{split}
\]

Equivalently, and more cleanly, we can combine these two terms using
the identity from the proof of \cref{lem:spiral-action}:
\[
  \kappa^*\lambda
  =
  -\frac12r(\theta)^2\,d\theta
  +
  \frac14d\bigl(r(\theta)^2\sin(2\theta)\bigr).
\]
Since $\sin(0)=\sin(4\pi N)=0$, the boundary term vanishes, and hence
\[
  \int_\kappa\lambda
  =
  -\frac12\int_0^{2\pi N}r(\theta)^2\,d\theta
  \leq
  -\frac{\pi}{16}NR^2.
  \label{eq:inward-coil-action}
\]
In particular,
\[
  \left|\int_\kappa\lambda\right|
  \geq
  \frac{\pi}{16}NR^2.
\]

We next construct the outward branch.  Put
\[
  \delta=\frac{R}{16N},
  \quad
  \widetilde r(\theta)=r(\theta)-\delta.
\]
During one complete turn, the radius of the first spiral decreases by
\[
  r(\theta)-r(\theta+2\pi)=\frac{R}{4N}.
\]
Since $\delta<R/(4N)$, the spiral with radius $\widetilde r(\theta)$
lies strictly between two successive turns of the first spiral.
Consequently, the two spirals are disjoint.  Traverse this second
spiral in the reverse direction, from $\theta=2\pi N$ to $\theta=0$,
so that it runs outward.

The action of the outward branch is
\[
  \frac12\int_0^{2\pi N}\widetilde r(\theta)^2\,d\theta.
\]
Therefore, the sum of the actions of the two spiral branches is
\[
\begin{split}
  -\frac12\int_0^{2\pi N}r(\theta)^2\,d\theta
  +
  \frac12\int_0^{2\pi N}\widetilde r(\theta)^2\,d\theta
  &=
  -\frac12\int_0^{2\pi N}
  \bigl(r(\theta)^2-\widetilde r(\theta)^2\bigr)\,d\theta.
\end{split}
\]
Since
\[
  r^2-\widetilde r^2
  =
  2r\delta-\delta^2
  \leq 2r\delta,
\]
we obtain
\[
\begin{split}
  \left|
    \int_{\text{inward branch}}\lambda
    +
    \int_{\text{outward branch}}\lambda
  \right|
  &\leq
  \int_0^{2\pi N}r(\theta)\delta\,d\theta\\
  &\leq
  (2\pi N)\frac{R}{2}\frac{R}{16N}\\
  &=
  \frac{\pi}{16}R^2.
\end{split}
\]
Thus the factor $N$ in the action of each individual branch cancels,
and the remaining error is bounded by a constant times $R^2$,
independently of $N$.  Geometrically, this remaining quantity is the
signed area of the thin ribbon between the two spirals.

Join the inner endpoints of the two spirals by a short embedded arc,
and join their outer endpoints to $(-R,0)$ and $(R,0)$ by embedded
connectors lying in $B(0,R)$ and disjoint from the spiral branches.
These connectors can be chosen to have total length at most $C_0R$,
where $C_0$ is universal.  Since $|\xi|\leq R$ in $B(0,R)$, every such
connector $\eta$ satisfies
\[
  \left|\int_\eta\lambda\right|
  =
  \left|\int_\eta\xi\,dx\right|
  \leq
  R\int_\eta|dx|
  \leq
  R\,\operatorname{length}(\eta).
\]
Hence the total contribution of the connectors is at most
$C_0R^2$.  The finitely many corners can be smoothed in disjoint small
neighborhoods, preserving embeddedness and changing the action by at
most another universal multiple of $R^2$.  We thereby obtain a smooth
embedded arc $\sigma_{R,N}$ satisfying
\[
  \left|\int_{\sigma_{R,N}}\lambda\right|
  \leq CR^2
\]
for a universal constant $C$.

Finally, let $q_{R,N}$ be a point at the end of the inward spiral,
before the arc begins to return along the outward branch.  The part
from $(-R,0)$ to the beginning of the inward spiral consists only of
one of the above connectors and a local smoothing, whose total action
is bounded by $C_1R^2$.  Combining this with
\eqref{eq:inward-coil-action}, we obtain
\[
\begin{split}
  \left|
    \int_{(-R,0)}^{q_{R,N}}\lambda
  \right|
  &\geq
  \left|\int_\kappa\lambda\right|-C_1R^2\\
  &\geq
  \frac{\pi}{16}NR^2-C_1R^2.
\end{split}
\]
After renaming the universal constants, this is the required estimate.
\end{proof}

\begin{proposition}\label{prop:finite-coil-nonexample}
    There exists a Jordan arc $\gamma\colon[0,1]\to\bR^2$ that is $C^\infty$ on $(0,1]$ and differentiable at $0$, with
    \[
      \gamma(0)=(0,0),
      \quad
      \gamma'(0)=(1,0),
    \]
    but not Liouville regular.
\end{proposition}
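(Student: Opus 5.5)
We construct $\gamma$ by inserting the coils of \cref{lem:finite-coil} at smaller and smaller scales along the $x$-axis, accumulating at the origin.

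\emph{Construction.} Choose centers $p_k=(x_k,0)$ with $x_k\searrow0$, radii $R_k$ with $R_k=o(x_k)$ (for instance $x_k=2^{-k}$, $R_k=2^{-2k}$), so that the disks $\overline{B(p_k,R_k)}$ are pairwise disjoint and lie in $x>0$. Take integers $N_k$ with $N_kR_k^2\to\infty$ (say $N_k=2^{3k}$). Define $\gamma$ as the $x$-axis segment $[0,1]\times\{0\}$, except that inside each disk $B(p_k,R_k)$ the diameter is replaced by the translate $p_k+\sigma_{R_k,N_k}$. Since each $\sigma$ agrees with the axis near its endpoints and is otherwise inside its disk, the result is an embedded arc that is smooth on $(0,1]$, because only finitely many disks meet any $[\delta,1]$. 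It is continuous at $0$ because the disks shrink to the origin, hence injective on $[0,1]$. Parametrize it so that $\gamma(t)$ has $x$-coordinate $t$ outside the disks. The parametrization inside each disk must be chosen carefully: any point of the $k$-th coil lies within $R_k$ of $p_k$, and its parameter is close to $x_k$. Since $R_k/x_k\to0$, the difference quotient $\gamma(t)/t$ tends to $(1,0)$ as $t\to0$, giving differentiability with $\gamma'(0)=(1,0)$. Some care is needed here: inside the disk the arc has large length, so one should parametrize the coil over the parameter interval $[x_k-R_k,x_k+R_k]$ (monotonically reparametrized; smoothness is preserved by using a diffeomorphism). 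Then $|\gamma(t)-(t,0)|\le 2R_k=o(t)$.

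\emph{Non-regularity.} Suppose $\gamma$ were Liouville regular with primitive $f$. Each closed coil interval $[x_k-R_k,x_k+R_k]$ lies in an open interval of $(0,1)$ on which $\gamma$ is a $C^1$-embedding, so \cref{lem:local-rigidity} gives that $f(t)-f(s)=\int_s^t\gamma^*\lambda$ for $s,t$ there. Take $s_k$ the parameter of the entry point $p_k-(R_k,0)$ and $t_k$ that of $p_k+q_{R_k,N_k}$. Translation by $p_k$ along the $x$-axis does not change $\lambda=\xi\,dx$, since $\xi$ is unchanged. So by \cref{lem:finite-coil},
\[
|f(t_k)-f(s_k)|\geq C'N_kR_k^2-CR_k^2\to\infty.
\]
But $s_k,t_k\to0$, so continuity of $f$ at $0$ forces $f(t_k)-f(s_k)\to0$, a contradiction.

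The main obstacle is the differentiability at $0$ together with the parametrization. We expect it to be handled by the scale separation $R_k=o(x_k)$, while the divergence only needs $N_kR_k^2\to\infty$, which is compatible with it since $N_k$ is free. A minor point to check is that the coil subarc is a proper subinterval, which \cref{lem:local-rigidity} requires; it is, since the coil sits away from $t=0$.
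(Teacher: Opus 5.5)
Your argument is correct and is essentially the paper's. You use the same insertion of the finite-coil arcs $\sigma_{R_k,N_k}$ on disks with $R_k=o(x_k)$, the same parametrization giving $|\gamma(t)-(t,0)|\le 2R_k$, and the same contradiction via the local rigidity lemma. You compare $q$ with the entry point of the coil rather than the exit point, which is slightly more direct because it avoids needing the bound on the total coil integral.

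Fix the sample parameters, though. With $R_k=2^{-2k}$, your choice $N_k=2^{3k}$ gives $N_kR_k^2=2^{-k}\to0$, so take instead, e.g., $N_k=k\,2^{4k}$; the paper uses $R_n=2^{-3n}$ and $N_n=n\,2^{6n}$, so that $N_nR_n^2=n$. Also start at $k\ge2$, since for $k=1$ the disks $B(p_1,R_1)$ and $B(p_2,R_2)$ overlap.
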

\begin{proof}
    Set
    \[
      a_n \coloneqq 2^{-n},
      \quad
      R_n \coloneqq 2^{-3n},
    \]
    and 
    \[
      D_n \coloneqq B\bigl((a_n,0),R_n\bigr).
    \]
    These disks are pairwise disjoint, since $R_n+R_{n+1}<a_n-a_{n+1}$.  Inside $D_n$ replace the horizontal diameter by the translate of the finite coil from \cref{lem:finite-coil}, with
    \[
        N_n=n\,2^{6n}.
    \]
    Outside the disks, keep the $x$-axis.
    Since each inserted arc agrees with the horizontal diameter near the boundary of its disk, the resulting image is a Jordan arc, smooth away from the origin.

    Parametrize the part inside $D_n$ on the interval $[a_n-R_n,a_n+R_n]$ and use the standard parametrization $t\mapsto(t,0)$ outside these intervals.
    The parametrizations of the inserted coils may be chosen smooth and equal to the standard one near the endpoints.  
    For $t\in[a_n-R_n,a_n+R_n]$, we have
    \[
        |\gamma(t)-(t,0)|\leq 2R_n.
    \]
    Since $R_n/a_n\to0$, it follows that
    \[
        \frac{\gamma(t)-\gamma(0)}{t}\to(1,0)
        \quad (t\searrow0).
    \]
    Thus $\gamma$ is differentiable at the origin with the stated derivative.
    
    We now examine the classical primitive on the punctured arc.  For $t>0$ define
    \[
        A(t) \coloneqq -\int_t^1\gamma^*\lambda.
    \]
    This is well-defined since the compact subarc $\gamma([t,1])$ meets only finitely many coils.  Let $e_n$ be the exit point of the $n$-th coil in the direction from $0$ to $1$.  By \cref{lem:finite-coil}, its total integral is bounded by $CR_n^2$. 
    Hence
    \[
      \sum_n\left|\int_{\text{$n$-th coil}}\lambda\right|
      \leq C\sum_nR_n^2<\infty.
    \]
    Consequently the sequence $A(e_n)$ has a finite limit as $n\to\infty$; in fact the same is true along points of the $x$-axis tending to the origin.

    On the other hand, let $q_n$ be the point in the $n$-th coil supplied by \cref{lem:finite-coil}.  
    Comparing the primitive at $q_n$ with the primitive at the exit of the same coil gives
    \[
      |A(q_n)-A(e_n)|
      \geq C'N_nR_n^2-2CR_n^2.
    \]
    Our choices give
    \[
      N_nR_n^2=n,
    \]
    so the right-hand side tends to $+\infty$.  
    Thus $A$ has arbitrarily large variation at points $q_n\to0$.  
    In particular, $A$ has no continuous extension to $0$.

    Suppose, for contradiction, that $\gamma$ is Liouville regular with Liouville primitive $f$.
    Since $\gamma$ is a $C^1$-embedding on the connected interval $(0,1]$, \cref{lem:local-rigidity} implies, by applying it on overlapping compact subintervals, that $f-A$ is constant on $(0,1]$. 
    Therefore, $f$ is of unbounded variation near $0$. 
    This contradicts the continuity of $f$ at $0$.
\end{proof}

\begin{remark}
    The point of \cref{prop:finite-coil-nonexample} is that the obstruction is intractable if one only records the values of the primitive at $e_n$'s in the proof. 
    The failure occurs since a primitive of $\lambda$ for $\gamma|_{(0,1]}$ is not bounded variation. 
    Hence uniform control of the primitive in \cref{def:arc} is genuinely stronger than convergence of the total signed areas of the successive small-scale pieces.
\end{remark}

\section{A positive-area example}\label{sec:positive}

In this section, we give an example of a Liouville regular curve with positive area.
Note that if a Jordan curve has positive area, then it inscribes a rectangle of any similarity class by Lebesgue's density theorem.

Classical constructions of positive-area Jordan arcs go back to Lebesgue and Osgood~\cite{Lebesgue03,Osgood}; see \cite{NV22} for example.  
We use a two-sided ternary version of the fat Cantor construction.  Besides producing a Liouville regular arc of positive area, it shows that a normalized Liouville primitive need not be unique.

\begin{remark}\label{rem:preceding-zero-area}
The families in \cref{cor:pvar,thm:holder,rem:rough-path} are all null with respect to planar Lebesgue measure.  Indeed, if a Jordan arc $\gamma$ has finite $p$-variation for some $p<2$, the standard interval-covering argument gives an estimate for Hausdorff dimension
\[
  \dim_{\mathrm H}\gamma([a,b])\leq p<2,
\]
and hence $\gamma([a,b])$ has planar Lebesgue measure zero.  If $\Omega$ is a bounded planar H\"older domain, then $\partial\Omega$ is null by~\cite[Corollary~4]{SmithStegenga}.  Finally, the first level of a geometric $2$-rough path has vanishing $2$-variation; applying the same interval-covering argument at exponent $2$, with the mesh tending to zero, gives an estimate for Hausdorff measure
\[
  \mathcal H^2\bigl(\gamma([a,b])\bigr)=0.
\]
Thus the construction below lies genuinely beyond the preceding examples: it shows that Liouville regular curves can have positive planar area. 
\end{remark}

\subsection{The ternary rectangle move}

If $p_0,\ldots,p_m\in\bR^2$, write $[p_0,\ldots,p_m]$ for the oriented polygonal arc through the listed points.  
First consider the unit square $Q=[0,1]^2$, with entrance $(0,0)$ and exit $(1,1)$, and the two complementary boundary arcs
\[
  A_Q=[(0,0),(1,0),(1,1)],
  \quad 
  B_Q=[(0,0),(0,1),(1,1)].
\]
Choose
\[
  0<a_1<b_1<a_2<b_2<1
\]
and delete the two vertical strips
\[
  G_1=(a_1,b_1)\times(0,1),
  \quad 
  G_2=(a_2,b_2)\times(0,1).
\]
The three remaining rectangles are
\[
  Q_1=[0,a_1]\times[0,1],\quad 
  Q_2=[b_1,a_2]\times[0,1],\quad 
  Q_3=[b_2,1]\times[0,1].
\]
Replace $A_Q$ and $B_Q$ by
\begin{align*}
 A'_Q={}&[(0,0),(a_1,0),(a_1,1),(b_1,1),(b_1,0),
             (a_2,0),(b_2,0),(1,0),(1,1)],\\
 B'_Q={}&[(0,0),(0,1),(a_1,1),(b_1,1),(a_2,1),
             (a_2,0),(b_2,0),(b_2,1),(1,1)].
\end{align*}
Thus the two new arcs coincide along the upper side of $G_1$ and the lower side of $G_2$. 
On each $Q_j$, they run along the two complementary boundary arcs from one corner to the opposite corner.  
Both $A'_Q$ and $B'_Q$ are embedded.

Since $\lambda=\xi\,dx$, direct calculation gives
\[
  \int_{A_Q}\lambda=0,
  \quad 
  \int_{B_Q}\lambda=1,
  \quad 
  \int_{A'_Q}\lambda=b_1-a_1,
  \quad 
  \int_{B'_Q}\lambda=1-(b_2-a_2).
\]
Consequently, 
\begin{align*}
 \int_{A'_Q}\lambda-\int_{A_Q}\lambda&=\Area(G_1),
 \\
 \int_{B'_Q}\lambda-\int_{B_Q}\lambda&=-\Area(G_2),
 \\
 \int_{B'_Q}\lambda-\int_{A'_Q}\lambda
   &=\sum_{j=1}^3\Area(Q_j).
\end{align*}

We shall also use the horizontal version of this move and the versions in which the entrance and exit are any prescribed pair of opposite corners.  
They are obtained from the model above by quarter-turns and reflections, interchanging the labels of the two boundary arcs if necessary. 
If two arcs have the same endpoints, the difference of the integrals of $\lambda$ along these arcs is the integral over a closed polygonal chain and hence is a signed area.  
Therefore, for every such version, the replacement arcs $A'_Q,B'_Q$ may be labeled so that
\begin{align}
 \left|\int_{A'_Q}\lambda-\int_{A_Q}\lambda\right|
   &\leq \Area(G_1\cup G_2),
 \label{eq:ternary-A-bound}\\
 \left|\int_{B'_Q}\lambda-\int_{B_Q}\lambda\right|
   &\leq \Area(G_1\cup G_2),
 \label{eq:ternary-B-bound}\\
 \int_{B'_Q}\lambda-\int_{A'_Q}\lambda
   &=\sum_{j=1}^3\Area(Q_j).
 \label{eq:ternary-difference-general}
\end{align}
The arcs coincide in the two deleted strips, whereas in every remaining rectangle their restrictions are again complementary boundary arcs satisfying the last identity. 
We refer to this operation as the \emph{ternary rectangle move}.

\subsection{Alternating vertical and horizontal refinements}

Start from
\[
  Q_0=[-1,1]^2
\]
and the two polygonal arcs
\begin{align*}
 \alpha_0&=[(-1,-1),(1,-1),(1,1)],\\
 \beta_0&=[(-1,-1),(-1,1),(1,1)].
\end{align*}
Set
\[
  \varepsilon_n \coloneqq 2^{-n-4},
  \quad 
  s_n \coloneqq \frac{1-\varepsilon_n}{3}.
\]
At the $n$-th step, split every remaining rectangle into three equal rectangular slabs, separated by two strips whose widths in the splitting direction are each $\varepsilon_n/2$ times the corresponding side length.  For odd $n$ the strips are vertical, and for even $n$ they are horizontal.  Apply the ternary rectangle move to the restrictions of $\alpha_{n-1}$ and $\beta_{n-1}$ in each remaining rectangle.  Denote the resulting embedded polygonal arcs by $\alpha_n$ and $\beta_n$ and the family of the $3^n$ remaining rectangles by $\mathcal Q_n$.

The first two refinements are shown in \cref{fig:ternary-two-steps}.

\begin{figure}[H]
\centering
\begin{tikzpicture}[
  x=1.35cm,
  y=1.35cm,
  active/.style={fill=black!8,draw=black!25,line width=0.2pt},
  removed/.style={fill=orange!13},
  frame/.style={draw=black!55,line width=0.35pt},
  alphapath/.style={
    draw=blue!75!black,
    line width=1.15pt,
    line join=round,
    line cap=round,
    ->
  },
  betapath/.style={
    draw=red!75!black,
    line width=1.15pt,
    line join=round,
    line cap=round,
    ->
  }
]
\begin{scope}[yshift=4cm]
  \fill[removed] (-1,-1) rectangle (1,1);
  \foreach \xa/\xb in {-1/-.5,-.25/.25,.5/1} {
    \path[active] (\xa,-1) rectangle (\xb,1);
  }
  \draw[frame] (-1,-1) rectangle (1,1);
  \draw[alphapath]
    (-1,-1) -- (-.5,-1) -- (-.5,1) -- (-.25,1)
    -- (-.25,-1) -- (.25,-1) -- (.5,-1) -- (1,-1) -- (1,1);
  \fill[blue!75!black] (-1,-1) circle (1.1pt);
  \node[font=\small,text=blue!75!black] at (0,-1.28) {$\alpha_1$};
\end{scope}

\begin{scope}[xshift=4cm,yshift=4cm]
  \fill[removed] (-1,-1) rectangle (1,1);
  \foreach \xa/\xb in {-1/-.5,-.25/.25,.5/1} {
    \path[active] (\xa,-1) rectangle (\xb,1);
  }
  \draw[frame] (-1,-1) rectangle (1,1);
  \draw[betapath]
    (-1,-1) -- (-1,1) -- (-.5,1) -- (-.25,1)
    -- (.25,1) -- (.25,-1) -- (.5,-1) -- (.5,1) -- (1,1);
  \fill[red!75!black] (-1,-1) circle (1.1pt);
  \node[font=\small,text=red!75!black] at (0,-1.28) {$\beta_1$};
\end{scope}

\begin{scope}
  \fill[removed] (-1,-1) rectangle (1,1);
  \foreach \xa/\xb in {-1/-.5,-.25/.25,.5/1} {
    \foreach \ya/\yb in {-1/-.5,-.25/.25,.5/1} {
      \path[active] (\xa,\ya) rectangle (\xb,\yb);
    }
  }
  \draw[frame] (-1,-1) rectangle (1,1);
  \draw[alphapath]
    (-1,-1) -- (-.5,-1) -- (-.5,-.5) -- (-.5,-.25)
    -- (-.5,.25) -- (-1,.25) -- (-1,.5) -- (-.5,.5)
    -- (-.5,1) -- (-.25,1)
    -- (-.25,.5) -- (.25,.5) -- (.25,.25) -- (-.25,.25)
    -- (-.25,-.25) -- (-.25,-.5) -- (-.25,-1) -- (.25,-1)
    -- (.5,-1)
    -- (1,-1) -- (1,-.5) -- (1,-.25) -- (1,.25)
    -- (.5,.25) -- (.5,.5) -- (1,.5) -- (1,1);
  \fill[blue!75!black] (-1,-1) circle (1.1pt);
  \node[font=\small,text=blue!75!black] at (0,-1.28) {$\alpha_2$};
\end{scope}

\begin{scope}[xshift=4cm]
  \fill[removed] (-1,-1) rectangle (1,1);
  \foreach \xa/\xb in {-1/-.5,-.25/.25,.5/1} {
    \foreach \ya/\yb in {-1/-.5,-.25/.25,.5/1} {
      \path[active] (\xa,\ya) rectangle (\xb,\yb);
    }
  }
  \draw[frame] (-1,-1) rectangle (1,1);
  \draw[betapath]
    (-1,-1) -- (-1,-.5) -- (-.5,-.5) -- (-.5,-.25)
    -- (-1,-.25) -- (-1,.25) -- (-1,.5) -- (-1,1)
    -- (-.5,1) -- (-.25,1)
    -- (.25,1) -- (.25,.5) -- (.25,.25) -- (.25,-.25)
    -- (-.25,-.25) -- (-.25,-.5) -- (.25,-.5) -- (.25,-1)
    -- (.5,-1)
    -- (.5,-.5) -- (1,-.5) -- (1,-.25) -- (.5,-.25)
    -- (.5,.25) -- (.5,.5) -- (.5,1) -- (1,1);
  \fill[red!75!black] (-1,-1) circle (1.1pt);
  \node[font=\small,text=red!75!black] at (0,-1.28) {$\beta_2$};
\end{scope}
\end{tikzpicture}
\caption{The first two refinements.  The widths of the deleted strips are
intentionally exaggerated.  The gray rectangles are the members of
$\mathcal Q_n$, and the pale orange regions have been deleted by the displayed
stage.  
The blue and red arcs are $\alpha_n$ and $\beta_n$, respectively.}
\label{fig:ternary-two-steps}
\end{figure}

The parametrizations are chosen recursively. 
The parameter interval belonging to a remaining rectangle is divided into five consecutive equal subintervals: three for the child rectangles and two for the common connecting segments. 
On every polygonal edge the parametrization is linear.  
Thus every level-$n$ rectangle has a parameter interval of length $5^{-n}$. 
The two arcs have the same parametrization on all connecting segments. 
Their portions outside the level-$n$ rectangles coincide, and in each $Q\in\mathcal Q_n$ they run along complementary boundary arcs.

Set
\[
  V_n \coloneqq \bigcup_{Q\in\mathcal Q_n}Q,
  \quad 
  V \coloneqq \bigcap_{n=0}^\infty V_n.
\]
At every step the fraction of area retained in each parent rectangle is $1-\varepsilon_n$.  Hence
\[
  \Area(V_n)=4\prod_{k=1}^n(1-\varepsilon_k)
\]
and, by continuity from above of Lebesgue measure, 
\begin{equation}
  \Area(V)
  =4\prod_{k=1}^\infty(1-\varepsilon_k)>0.
  \label{eq:fat-area}
\end{equation}
Here positivity follows from $\sum_k\varepsilon_k<\infty$.

Let
\[
  \delta_n \coloneqq \max_{Q\in\mathcal Q_n}\diam(Q).
\]
Since the splitting direction alternates and $s_n<1/3$,
\begin{equation}
  \delta_n\leq 2\sqrt2\,3^{-\lfloor n/2\rfloor},
  \quad 
  \sum_{n=0}^\infty\delta_n<\infty.
  \label{eq:rectangle-diameter}
\end{equation}
The passage from level $n$ to level $n+1$ changes either arc only inside the rectangles of $\mathcal Q_n$. 
With the parametrizations described above,
\[
 \|\alpha_{n+1}-\alpha_n\|_\infty\leq\delta_n,
 \quad 
 \|\beta_{n+1}-\beta_n\|_\infty\leq\delta_n.
\]
Thus both sequences converge uniformly.  
Moreover, $\alpha_n$ and $\beta_n$ agree off the active parameter intervals and take values in the same rectangle on each active interval, so
\[
  \|\alpha_n-\beta_n\|_\infty\leq\delta_n.
\]
Their limits are therefore the same map; denote it by
\[
  \gamma \colon[0,1]\to Q_0.
\]

The map $\gamma$ is injective. 
Indeed, two distinct parameters are separated at some finite stage since each of the subintervals of $[0,1]$ where $\alpha_n$ and $\beta_n$ does not coincide has lengths $5^{-n}$. 
At that stage they lie either in different remaining rectangles or in distinct connecting pieces. 
These sets are disjoint except at the common endpoint represented by a single parameter, and all later changes occur only inside the corresponding remaining rectangles.
Thus their limiting images are distinct.

The image of $\gamma$ contains $V$.  
To see this, let $P \in V$ and choose the nested rectangles $Q_n\in\mathcal Q_n$ containing $P$. 
The corresponding nested parameter intervals have lengths $5^{-n}$ and hence determine a unique $t_P\in[0,1]$. 
For every $n$, $\alpha_m$ and $\beta_m$ send $t_P$ into $Q_n$ for $m \ge n$.
Since $\diam(Q_n)\to0$, we obtain $c(t_P)=P$. 
In view of \eqref{eq:fat-area}, $\gamma$ is a Jordan arc of positive measure.

\subsection{The two limiting primitives}

Normalize the polygonal primitives by
\[
 f_n^\alpha(t)=\int_0^t\alpha_n^*\lambda,
 \quad 
 f_n^\beta(t)=\int_0^t\beta_n^*\lambda.
\]
Let
\[
  E_{n+1}=\Area(V_n\setminus V_{n+1})
\]
be the area deleted in passing from level $n$ to level $n+1$.

\begin{lemma}\label{lem:ternary-action}
    There is a constant $C>0$, independent of $n$, such that
    \[
        \|f_{n+1}^\alpha-f_n^\alpha\|_\infty
        +\|f_{n+1}^\beta-f_n^\beta\|_\infty
        \leq C(E_{n+1}+\delta_n).
    \]
\end{lemma}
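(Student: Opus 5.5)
We compare $f^\alpha_{n+1}$ and $f^\alpha_n$ parameter by parameter, using that the refinement from level $n$ to level $n+1$ changes $\alpha_n$ only on the active parameter intervals $I_Q$ (one for each $Q\in\mathcal Q_n$, of length $5^{-n}$), and that on each $I_Q$ the two arcs have the same endpoints. Fix $t\in[0,1]$. The primitive difference $f^\alpha_{n+1}(t)-f^\alpha_n(t)$ splits into two parts. The first is the sum of the full increments $\int_{A'_Q}\lambda-\int_{A_Q}\lambda$ over all $Q\in\mathcal Q_n$ whose intervals $I_Q$ lie entirely before $t$. The second is the partial contribution from the at most one interval $I_{Q_*}$ containing $t$ in its interior. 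Off the active intervals, the two parametrized arcs coincide, so there is no further contribution. The same decomposition applies verbatim to $\beta$.

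For the full increments, we use \eqref{eq:ternary-A-bound} and \eqref{eq:ternary-B-bound}. Each completed rectangle $Q$ contributes at most $\Area(G_1^Q\cup G_2^Q)$, the area of the two strips deleted from $Q$. These strips are pairwise disjoint for distinct $Q$, and their union is exactly $V_n\setminus V_{n+1}$. Hence the sum over all completed rectangles is bounded by $E_{n+1}$, uniformly in $t$.

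For the partial interval, we bound $\bigl|\int_{\alpha_{n+1}|_{[t_0,t]}}\lambda-\int_{\alpha_n|_{[t_0,t]}}\lambda\bigr|$, where $t_0$ is the left endpoint of $I_{Q_*}$. Both restricted arcs lie in $Q_*\subset Q_0=[-1,1]^2$, where $|\xi|\le1$. So each integral is bounded by the total $x$-variation, and hence by the length, of the corresponding polygonal piece inside $Q_*$.

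The length of $\alpha_n|_{I_{Q_*}}$ is at most two sides of $Q_*$, so at most $2\delta_n$. The arc $\alpha_{n+1}|_{I_{Q_*}}$ consists of the complementary boundary arcs of three child rectangles plus the two connecting segments across the strips. Its length is therefore at most a fixed multiple of the perimeter of $Q_*$, say $\le 8\delta_n$; this is read off directly from the explicit polygons $A'_Q,B'_Q$. Both pieces thus contribute $O(\delta_n)$.

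I expect this partial interval to be the only delicate point. The issue is that intermediate partial sums, where $t$ stops inside $Q_*$, are not controlled by area; that is why the term $\delta_n$ appears. The crude length bound handles it, using only the uniform geometry of the move: a bounded number of edges, each of length at most $\diam Q_*$. Adding the two parts for both $\alpha$ and $\beta$ gives the claim with, say, $C=20$.
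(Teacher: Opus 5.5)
Your proposal is correct and is essentially the paper's argument. Completed rectangles contribute at most the area of their two deleted strips, by \eqref{eq:ternary-A-bound} and \eqref{eq:ternary-B-bound}, and these areas sum to $E_{n+1}$; the single partially traversed rectangle is handled by $|\xi|\le 1$ times the $O(\delta_n)$ length of the old and new boundary routes. Your explicit split into completed intervals $I_Q$ and the one interval containing $t$ is a slightly cleaner way to state the paper's division into ``the subset where $\alpha_n$ and $\alpha_{n+1}$ coincide'' and ``the remaining part.''
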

\begin{proof}
    On the subset of $[0,1]$ where $\alpha_n$ and $\alpha_{n+1}$ coincide, the difference between $f_n^\alpha$ and $f_{n+1}^\alpha$ is bounded by $E_{n+1}$ by \eqref{eq:ternary-A-bound} and \eqref{eq:ternary-B-bound}. 

    The remaining part is sent by $\alpha_n$ to the boundary of a rectangle in $\mathcal Q_n$.
    For each rectangle in $\mathcal Q_n$, both the old boundary arc contained in the image of $\alpha_n$ and its ternary replacement, which is contained in the image of $\alpha_{n+1}$, consist of a fixed number of segments inside that rectangle. 
    Their total lengths are bounded by a universal constant times its diameter. 
    Since $|\xi|\leq1$ on $Q_0$, the absolute value of the integral of $\lambda$ along any such subarc is bounded by $C\delta_n$.  
    This proves the estimate for the first term.
    
    The same argument applies to $\beta_n$ and $\beta_{n+1}$. 
\end{proof}

Since the sets $V_n$ are nested,
\[
  \sum_{n=0}^\infty E_{n+1}
  =\Area(V_0)-\Area(V)<\infty.
\]
Together with \eqref{eq:rectangle-diameter} and \cref{lem:ternary-action}, this shows that both sequences of primitives are uniformly Cauchy.  Write
\[
  f_n^\alpha\to f^\alpha,
  \quad 
  f_n^\beta\to f^\beta
\]
uniformly on $[0,1]$.

The two limits are not equal.  
On the connecting pieces the two level-$n$ arcs coincide, while on every $Q\in\mathcal Q_n$ their two boundary routes satisfy \eqref{eq:ternary-difference-general}.  
Therefore
\begin{equation}
 f_n^\beta(1)-f_n^\alpha(1)=\Area(V_n).
 \label{eq:primitive-gap}
\end{equation}
Passing to the limit and using \eqref{eq:fat-area} gives
\[
  f^\beta(1)-f^\alpha(1)=\Area(V)>0,
\]
whereas $f^\alpha(0)=f^\beta(0)=0$.

\begin{proposition}\label{prop:positive-area}
    The common limit $\gamma \colon[0,1]\to\bR^2$ constructed above is a Liouville regular Jordan arc of positive planar Lebesgue measure.  Moreover, it admits two distinct Liouville primitives normalized to vanish at $0$.
\end{proposition}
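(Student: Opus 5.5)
The plan is to assemble the facts already established in this section and feed them into \cref{def:arc} twice, once with the sequence $(\alpha_n)_n$ and once with $(\beta_n)_n$. Both $\alpha_n$ and $\beta_n$ are embedded polygonal arcs, linearly parametrized on edges. They converge uniformly to the same map $\gamma$, since $\|\alpha_n-\beta_n\|_\infty\leq\delta_n\to0$. Their normalized primitives $f_n^\alpha,f_n^\beta$ converge uniformly to $f^\alpha,f^\beta$. The only gap with the definition is that polygonal arcs are not $C^1$. I will close it with \cref{lem:polygonal-rounding}: for each $n$, round $\alpha_n$ to a $C^1$-embedding $\widetilde\alpha_n$ whose $C^0$ error and primitive error are both less than $1/n$, and do the same for $\beta_n$. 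Alternatively, regard each polygonal arc as Liouville regular with its own primitive and apply \cref{lem:diagonal}. Then $(\widetilde\alpha_n,\widetilde f_n^\alpha)_n$ is an approximating sequence for $(\gamma,f^\alpha)$, and likewise for $\beta$.

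Before this I will check two points. First, the limits $f^\alpha,f^\beta$ must be continuous. This is automatic, because each $f_n^\alpha$ is continuous and the convergence is uniform. The uniformity comes from \cref{lem:ternary-action} together with $\sum_n E_{n+1}<\infty$ and $\sum_n\delta_n<\infty$ from \eqref{eq:rectangle-diameter}, which make the sequence uniformly Cauchy. Second, $\gamma$ must be a Jordan arc of positive measure. This was argued above: $\gamma$ is injective, and $\gamma([0,1])\supset V$ with $\Area(V)>0$ by \eqref{eq:fat-area}. I will also make sure the parametrizations of $\alpha_n$ and $\beta_n$ are on the fixed interval $[0,1]$, with piecewise linear edge parametrizations, so that \cref{lem:polygonal-rounding} applies directly; if an edge is degenerate, I will merge it.

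For distinctness I will pass to the limit in \eqref{eq:primitive-gap}. This gives $f^\beta(1)-f^\alpha(1)=\lim_n\Area(V_n)=\Area(V)>0$, using continuity from above of Lebesgue measure. Both primitives vanish at $0$, so they are two distinct normalized Liouville primitives for the same arc. Rounding does not affect this conclusion, since it changes the primitives by at most $1/n$.

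I expect the main obstacle to be verifying \eqref{eq:primitive-gap} carefully, since it is what drives distinctness. On each connecting segment the two arcs coincide with the same parametrization, so those contributions cancel. In each $Q\in\mathcal Q_n$, the two arcs are the complementary boundary routes between opposite corners, and their difference of $\lambda$-integrals is $\pm\Area(Q)$. The point to check is that the labeling produced by \eqref{eq:ternary-difference-general} makes the sign $+$ consistently for every rectangle through all the quarter-turns and reflections. I would verify this inductively: the move preserves the relation ``$\beta$-route minus $\alpha$-route equals $+$area'' in each child rectangle. This follows from the last identity of the move applied to each parent, together with the choice of labels in the rotated and reflected versions.
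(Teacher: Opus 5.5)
Your proposal is correct and follows the paper's proof. Like the paper, you round $\alpha_n$ and $\beta_n$ via \cref{lem:polygonal-rounding} to get $C^1$ approximating sequences for $(\gamma,f^\alpha)$ and $(\gamma,f^\beta)$, and you read off distinctness from \eqref{eq:primitive-gap}. Your inductive check, that the $\beta$-route stays on the same side under rotations and that label swaps compensate for reflections so the sign in \eqref{eq:ternary-difference-general} is $+$ in every rectangle, fills in a point the paper only asserts.
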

\begin{proof}
    For each $n$, round the finitely many corners of $\alpha_n$ using \cref{lem:polygonal-rounding}, with both the $C^0$-error and the uniform primitive error smaller than $2^{-n}$.  
    The resulting $C^1$-embeddings converge uniformly to $c$, and their primitives converge uniformly to $f^\alpha$. 
    Thus $\gamma$ is Liouville regular with associated primitive $f^\alpha$.  Applying the same argument to $\beta_n$ gives the associated primitive $f^\beta$.  They are distinct by \eqref{eq:primitive-gap}, and the positive-area assertion was proved above.
\end{proof}

\begin{corollary}\label{cor:positive-curve}
There exists a Liouville regular Jordan curve of positive Lebesgue measure.
\end{corollary}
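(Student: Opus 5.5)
The plan is to close up the positive-area arc $\gamma$ of \cref{prop:positive-area} into a Jordan curve by adding a rectifiable arc outside $Q_0$, and then to invoke \cref{prop:closing}.

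\emph{Choice of the closing arc.} The endpoints of $\gamma$ are $\gamma(0)=(-1,-1)$ and $\gamma(1)=(1,1)$, both corners of $Q_0$. Inspecting the construction, $\gamma$ lies in $Q_0$. I will choose a polygonal arc $\eta$ from $(1,1)$ back to $(-1,-1)$ whose interior lies in $\bR^2\setminus Q_0$, for instance
\[
  \eta=[(1,1),(2,2),(-2,2),(-2,-2),(-1,-1)].
\]
The two diagonal edges leave $Q_0$ immediately at the corners, and the rest of $\eta$ stays in $\{\max(|x|,|\xi|)=2\}$. Hence $\eta$ meets $\gamma$ only at the two common endpoints. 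The cyclic concatenation $c$ of $\gamma$ and $\eta$ is then injective on the circle, so $c$ is a Jordan curve.

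\emph{Liouville regularity.} The arc $\eta$ is rectifiable, so it is Liouville regular by \cref{cor:pvar}; alternatively one can cite \cref{lem:polygonal-rounding} directly. The arc $\gamma$ is Liouville regular by \cref{prop:positive-area}. Reparametrizing both arcs on consecutive subintervals of a period via \cref{lem:reparam}, \cref{prop:closing} (with $N=2$) shows that $c$ is Liouville regular.

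\emph{Positive measure.} The image of $c$ contains $\gamma([0,1])\supset V$, and $\Area(V)>0$ by \eqref{eq:fat-area}.

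The only point that needs care is verifying that $\gamma$ really stays inside $Q_0$ and touches $\partial Q_0$ near the corners only in a way that cannot hit $\eta$. This follows because every $\alpha_n$ takes values in $Q_0$, so the uniform limit does too, while $\eta\setminus\{(1,1),(-1,-1)\}$ lies outside the closed square $Q_0$. No other obstacle is expected.
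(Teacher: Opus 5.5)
Your proposal is correct and follows the paper's proof: you close $\gamma$ with a rectifiable arc that meets $Q_0$ only at the two corners, apply \cref{cor:pvar} to that arc and \cref{prop:closing} to the concatenation, and observe that the resulting curve contains $V$. Your explicit polygonal $\eta$ and your check that $\gamma\subset Q_0$ supply details the paper leaves implicit, and you correctly take the endpoint $(-1,-1)$, whereas the paper's proof writes $(0,0)$, which is a slip since $Q_0=[-1,1]^2$.
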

\begin{proof}
    Join $(1,1)$ to $(0,0)$ by a smooth embedded arc contained in the complement of $Q_0$ except at its endpoints. 
    This connecting arc is rectifiable, hence Liouville regular by \cref{cor:pvar}.  
    Its concatenation with $\gamma$ is a Jordan curve, so \cref{prop:closing} applies.  
    The resulting curve contains $V$ and therefore has positive measure.
\end{proof}

\printbibliography

\end{document}